\newtheorem{Theorem}{Theorem}[section]
\newtheorem{Lemma}[Theorem]{Lemma}
\newtheorem{Definition}[Theorem]{Definition}
\newtheorem{Remark}[Theorem]{Remark}
\newcommand{\vx}{x}
\newcommand{\vrh}{\vr_h}
\newcommand{\vmh}{\vm_h}
\newcommand{\vuh}{\bm{u}_h}
\newcommand{\vu}{\bm{u}}
\newcommand{\uih}{u_{i,h}}
\newcommand{\mh}{\vc{m}_h}
\newcommand{\br}{ \nonumber \\ }
\newcommand{\intSh}[1] {\int_{\sigma} #1 \ds }
\newcommand{\intDh}[1] {\int_{D_\sigma} #1 \dx }
\newcommand{\bbE}{ \mathbb{E}}
\newcommand{\RE}{ R_E}
\DeclareMathOperator{\dist}{dist}
\newcommand{\tor}{\mathbb{T}^d}
\newcommand{\bfv}{\bm{v}}
\newcommand{\tvm}{\widetilde{\vc{m}}}
\newcommand{\bfphi}{\boldsymbol{\phi}}
\newcommand{\bfvarphi}{\boldsymbol{\varphi}}
\newcommand{\Pim}{\Pi_\mathcal{T}}
\newcommand{\Pie}{\Pi_\faces}
\newcommand{\Piei}{\Pi_\faces^{(i)}}
\newcommand{\Pid}{\Pi_\ep}
\newcommand{\sumi}{\sum_{i=1}^d}
\newcommand{\sumKf}{{\sum_{K \in \grid} |K| \sum_{\sigma \in \facesK}}}
\newcommand{\ds}{\,{\rm d}S_x}
\newcommand{\grid}{{\cal T}_h}
\newcommand{\TS}{\Delta t}
\newcommand{\Div}{{\rm div}_x}
\newcommand{\Grad}{\nabla_x}
\newcommand{\Divh}{{\rm div}_h}
\newcommand{\Gradh}{\nabla_h}
\newcommand{\Lap}{\Delta_x}
\newcommand{\Laph}{\Delta_h}
\newcommand{\Laphi}{\Delta_h^{(i)}}
\newcommand{\Gradd}{\nabla_\faces}
\newcommand{\Divmesh}{{\rm div}_{\mathcal{T}}}
\newcommand{\pdedgei}{\eth_ \faces^{(i)}}
\newcommand{\pdmeshi}{\eth_{\cal T}^{(i)}}
\newcommand{\co}[2]{{\rm co}\{ #1 , #2 \}}
\newcommand{\Ov}[1]{\overline{ #1 } }
\newcommand{\avs}[1]{ \left\{\hspace{-3pt}\left\{ #1 \right\}\hspace{-3pt} \right\} }
\newcommand{\avsi}[1]{\avs{#1}^{(i)}}
\newcommand{\aleq}{\stackrel{<}{\sim}}
\newcommand{\ageq}{\stackrel{>}{\sim}}
\newcommand{\Un}[1]{\underline{#1}}
\newcommand{\vr}{\varrho}
\newcommand{\tvr}{\widetilde \vr}
\newcommand{\tvu}{{\widetilde \vu}}
\newcommand{\vm}{\bm{m}}
\newcommand{\ve}{\bm{e}}
\newcommand{\vn}{\bm{n}}
\newcommand{\vc}[1]{{\bm #1}}
\newcommand{\dx}{\,{\rm d} {x}}
\newcommand{\dt}{{\rm d} t }
\newcommand{\jump}[1]{\left\llbracket#1\right\rrbracket}
\newcommand{\jumpi}[1]{\jump{#1}^{(i)}}
\newcommand{\Bigabs}[1]{\Big| #1\Big|}
\newcommand{\abs}[1]{{\left| #1 \right|}}
\newcommand{\norm}[1]{\left\lVert#1\right\rVert}
\newcommand{\vU}{\vc{U}}
\newcommand{\dxdt}{\dx \dt}
\newcommand{\intTd}[1]{\int_{\tor} #1 \dx}
\newcommand{\intTdB}[1]{\int_{\tor}\left( #1 \right)\dx}
\newcommand{\intOs}[1]{\int_{\Os} #1 \dx}
\newcommand{\intOf}[1]{\int_{\Of} #1 \dx}
\newcommand{\intOfB}[1]{\int_{\Of} \left( #1 \right) \dx}
\newcommand{\intOsh}[1]{\int_{\Osh} #1 \dx}
\newcommand{\intOfh}[1]{\int_{\Ofh} #1 \dx}
\newcommand{\intfacesC}[1]{\int_{\facesC}{ #1 \ds}}
\newcommand{\intfacesIO}[1]{\int_{\faces\setminus\facesC}{ #1 \ds}}
\newcommand{\intfaces}[1]{\int_{\faces}{ #1 \ds}}
\newcommand{\Of}{\Omega^f}
\newcommand{\Os}{\Omega^s}
\newcommand{\Oc}{\Omega^C}
\newcommand{\Och}{\Omega^C_h}
\newcommand{\Ochm}{\Omega^{C-}_h}
\newcommand{\Ochp}{\Omega^{C+}_h}
\newcommand{\OI}{\Omega^I}
\newcommand{\OO}{\Omega^O}
\newcommand{\Osh}{\Os_h}
\newcommand{\Ofh}{\Of_h}
\newcommand{\intOch}[1]{\int_{\Oc_h}  #1 \dx}
\newcommand{\intOOh}[1]{\int_{\OO_h}  #1 \dx}
\newcommand{\intOIh}[1]{\int_{\OI_h}  #1 \dx}
\newcommand{\intn}{\int_{0}^{t_{n+1}}}
\newcommand{\intTau}{\int_0^\tau}
\newcommand{\intTauTd}[1]{\int_0^\tau \int_{\tor}  #1  \dxdt}
\newcommand{\intTauTdB}[1]{\int_0^\tau \int_{\tor} \left( #1 \right) \dxdt}
\newcommand{\intTauOch}[1]{\int_0^\tau \int_{\Och}  #1  \dxdt}
\newcommand{\intTauOchp}[1]{\int_0^\tau \int_{\Ochp}  #1  \dxdt}
\newcommand{\intTauOchm}[1]{\int_0^\tau \int_{\Ochm} #1  \dxdt}
\newcommand{\intTauOOh}[1]{\int_0^\tau \int_{\OO_h}  #1  \dxdt}
\newcommand{\intTauOIh}[1]{\int_0^\tau \int_{\OI_h}  #1  \dxdt}
\newcommand{\intTauOf}[1]{\int_0^\tau \int_{\Of}   #1   \dxdt}
\newcommand{\intTauOfh}[1]{\int_0^\tau \int_{\Ofh}   #1   \dxdt}
\newcommand{\intTauOs}[1]{\int_0^\tau \int_{\Os}   #1   \dxdt}
\newcommand{\intTauOsh}[1]{\int_0^\tau \int_{\Osh}  #1   \dxdt}
\newcommand{\RO}{\tor}
\newcommand{\intTauOB}[1]{ \int_0^\tau \int_{\RO} \left( #1 \right) \ \dxdt}
\newcommand{\vv}{\bm{v}}
\newcommand{\ep}{\varepsilon}
\newcommand{\R}{\mathbb{R}}
\newcommand{\I}{\mathbb{I}}
\def\softd{{\leavevmode\setbox1=\hbox{d}%
          \hbox to 1.05\wd1{d\kern-0.4ex{\char039}\hss}}}
\definecolor{Cgrey}{rgb}{0.85,0.85,0.85}
\definecolor{Cblue}{rgb}{0.50,0.85,0.85}
\definecolor{Cred}{rgb}{1,0,0}
\definecolor{fancy}{rgb}{0.10,0.85,0.10}
\definecolor{forestgreen}{rgb}{0.13, 0.55, 0.13}
\newcommand{\cblue}{\color{blue}}
\newcommand{\betacf}{\beta_R}
\newcommand{\betane}{\widetilde{\beta_R}}
\newcommand{\pd}{\partial}
\newcommand{\pdt}{\pd_t}
\newcommand{\Hc}{{\mathcal{P}}}
\newcommand{\penl}{\epsilon}
\newcommand{\viso}{\alpha}
\newcommand{\faces}{\mathcal{E}}
\newcommand{\facesi}{\faces _i}
\newcommand{\facesK}{\faces(K)}
\newcommand{\facesC}{\faces_C}
\newcommand{\bS}{\mathbb S}
\newcommand{\chinese}[1]{\begin{CJK*}{UTF8}{gbsn}\! #1 \end{CJK*}}
\begin{document}

\title{Convergence and error estimates of a penalization finite volume method for the compressible Navier--Stokes system} 
\author{
M\'aria Luk\'a\v{c}ov\'a-Medvi\softd ov\'a\thanks{
The work of M.L. and Y.Y. was supported by the Deutsche Forschungsgemeinschaft (DFG, German Research Foundation) - project number 233630050 - TRR 146 as well as by TRR 165 Waves to Weather, and by Chinesisch-Deutschen Zentrum f\"ur Wissenschaftsf\"orderung\chinese{(中德科学中心)} - Sino-German project number GZ1465.
M.L. is grateful to the Gutenberg Research College and Mainz Institute of Multiscale Modelling for supporting her research. 
\newline \hspace*{1em} $^{\dag}$B.S.  was supported by National Natural Science Foundation of China under grant No. 12201437. 
}
\and Bangwei She$^{\dag}$
\and Yuhuan Yuan$^{\clubsuit}$
}

\date{\today}

\maketitle

\vspace{-0.5cm}
\centerline{$^*$Institute of Mathematics, Johannes Gutenberg-University Mainz}
\centerline{Staudingerweg 9, 55 128 Mainz, Germany}
\centerline{lukacova@uni-mainz.de}

\medskip
\centerline{$^\dag$Academy for Multidisciplinary studies, Capital Normal University}
\centerline{ West 3rd Ring North Road 105, 100048 Beijing, P. R. China}
\centerline{bangweishe@cnu.edu.cn}


\medskip
\centerline{$^\clubsuit$School of Mathematics, Nanjing University of Aeronautics and Astronautics}
\centerline{Jiangjun Avenue No. 29, 211106 Nanjing, P. R. China}
\centerline{yuhuanyuan@nuaa.edu.cn}

\begin{abstract}
In numerical simulations a smooth domain occupied by a fluid has to be approximated by a computational domain that typically does not coincide with a physical domain. Consequently, in order to study convergence and error estimates of a numerical method domain-related discretization errors, the so-called variational crimes, need to be taken into account.
In this paper we apply the penalty approach to control domain-related discretization errors. 
We embed the physical domain into a large enough cubed domain and study the convergence of a finite volume method for the corresponding domain-penalized problem.
We show that numerical solutions of the penalized problem converge to a generalized, the so-called dissipative weak, solution of the original problem. If a strong solution exists, the dissipative weak solution emanating from the same initial data coincides with the strong solution. In this case,  we apply a novel tool of the relative energy and derive the error estimates between the numerical solution and the strong solution. Extensive  numerical experiments that confirm theoretical results are presented.
\end{abstract}

{\bf Keywords:} compressible Navier--Stokes system,   convergence, error estimates, finite volume method, penalization method, dissipative weak solution, relative energy


\section{Introduction}
We study compressible fluid flow in a smooth domain $\Of, \, \Of  \subset \R^d, \, d=2,3,$  modelled by the Navier--Stokes system
\begin{subequations}\label{pde}
\begin{align}
\partial_t \vr + \Div (\vr \vu) & = 0, \label{pde_d}	\\
\partial_t (\vr \vu) + \Div (\vr \vu \otimes \vu) + \Grad p(\vr) & = \Div \bS( \Grad \vu)  \label{pde_m}
\end{align}
subject to the following Dirichlet boundary condition and initial data, respectively,
\begin{equation} \label{pde_bc}
	\vu|_{\pd \Of} = 0,
\end{equation}
\begin{equation} \label{pde_ic}
	\vr(0, \cdot) = \vr_0, \ \vr \vu (0, \cdot) = \vm_0.
\end{equation}
\end{subequations}
Here, $\vr$ and $\vu$ are the fluid density and velocity, respectively. The viscous stress tensor $\bS$ reads
\begin{equation*}
\bS ( \Grad \vu) = \mu \left( \Grad \vu + \Grad^t \vu - \frac{2}{d} \Div \vu \mathbb{I} \right)   + \lambda \Div \vu \mathbb{I},
\end{equation*}
where $\mu > 0$ and $ \lambda \geq 0$ are the viscosity coefficients.
For the sake of simplicity, we consider the \emph{isentropic}  state equation:
\begin{equation} \label{EOS}
p(\vr) = a \vr^\gamma, \ a > 0,\ \gamma > 1 \ \mbox{ with the associated pressure potential }\
\Hc(\vr) = \frac{a}{\gamma - 1} \vr^\gamma.
\end{equation}
Throughout the paper we always assume that the initial data satisfy
\begin{equation} \label{ic}
 \vr_0 \geq \Un{\vr} > 0,\quad \vr_0 \in L^\infty(\Of), \quad \vm_0 \in L^\infty(\Of;\R^d).
\end{equation}

On the one hand, mathematical analysis of the Navier--Stokes system \eqref{pde} is currently available either for periodic boundary conditions or for no-slip boundary conditions applied in a smooth physical domain occupied by a fluid. On the other hand,  numerical methods, e.g., finite volume or finite element methods, typically require a polygonal computational domain.

Hence, a smooth physical domain has to be approximated by a polygonal computational domain and additional approximation errors arise. Let us note that in the case of complicated geometry the generation of a suitable polygonal approximation may be computationally very costly.

To overcome these difficulties we apply a penalty method originally used in the context of incompressible Navier--Stokes equations by Angot et al.~\cite{ABF_penalty}.
Thus, the physical domain $\Of$ is embedded into a large cube on which the periodic boundary conditions are imposed, see Figure~\ref{figD}. The original boundary conditions are enforced through a penalty term, that is a singular friction term in the momentum equation.
The resulting penalized problem is solved on a flat torus $\tor$ by an upwind-type finite volume (FV) method.
Specifically, the penalized Navier--Stokes system on $\tor$ reads
\begin{subequations}\label{ppde}
\begin{align}
\partial_t \vr + \Div (\vr \vu) &= 0, \label{ppde_d}	\\
\partial_t (\vr \vu) + \Div (\vr \vu \otimes \vu) + \Grad p(\vr) &= \Div \bS( \Grad \vu) -  \frac{ \mathds{1}_{\Os} }{\penl} \vu, \label{ppde_m}
\end{align}
where $\penl > 0$ is the penalty parameter and $ \mathds{1}_{\Os}$ is the characteristic function
\begin{equation*}
 \mathds{1}_{\Os} (\vx)=
\begin{cases}
1 & \mbox{if} \ \vx \in \Os :=\tor \setminus \Of,\\
0 & \mbox{if} \ \vx \in \Of.
\end{cases}
\end{equation*}
The initial data of the penalized problem \eqref{ppde_d}--\eqref{ppde_m} are set to be
\begin{equation} \label{ppde_ic}
\begin{aligned}
&\tvr_{0}  := \tvr(0, \cdot) =
	\begin{cases}
\mbox{ some } \vr_0^s  & \mbox{if} \ \vx \in \Os, \\
\vr_0 &\mbox{if} \  \vx \in \Of,
\end{cases} \quad \quad
 \widetilde{\vm}_{0}  :=\widetilde{\vm}(0, \cdot)=
\begin{cases}
0 & \mbox{if} \ \vx \in \Os,\\
\vr_0 \vu_0 &\mbox{if} \  \vx \in \Of,
\end{cases}
\\
&  \tvr_{0}  > 0 \ \  \  \mbox{satisfying the periodic boundary condition}.
\end{aligned}
\end{equation}
\end{subequations}

\begin{figure}[!h]
\centering
\begin{tikzpicture}[scale=0.7]
\draw[color=blue, very thick] ( -5,-3.2 ) rectangle ( 5, 3.2);
\path node at (-4,2) { \cblue \huge $\tor$};
\draw[very thick]  (0,0) ellipse (4 and 2.5 );
\path node at (0,0) { \huge $\Of$};
\end{tikzpicture}
\caption{A fluid domain $\Of$ embedded into a torus $\tor$.}
\label{figD}
\end{figure}

The main aim of the present paper is a rigorous convergence and error analysis of the Dirichlet boundary problem for the compressible Navier--Stokes equations. For a finite volume method using piecewise constant approximations a direct  approach of approximating a smooth boundary by a sequence of polygonal domains leads to problems in the control of consistency errors arising from the convective terms.
Alternatively, if a combined finite volume-finite element method is applied, the convergence and error analysis on a polygonal approximation of the fluid domain $\Of$ has be done, see our recent works \cite{FLM_FoCoM}, \cite{LMS:2023}. The main reason of this discrepancy is a finite element approximation of the velocity that allows more degree of freedom inside a mesh cell.

The idea to penalize a complicated physical domain and solve numerically the corresponding problem on a simple domain is quite often used in the literature. We refer a reader to \cite{Peskin:1972,Peskin:2002} for the immersed boundary method and to \cite{Glowinski-Pan-Periaux:1994,Glowinski-Pan-Periaux:1994a,Hyman:1952} for
the fictitious domain method developed in the context of
 incompressible Navier--Stokes equations. In  \cite{Bruneau:2018} a penalty method has been applied to approximate a moving domain in the
 fluid-structure interaction problem. Further, in  \cite{Hesthaven1, Hesthaven2} penalization of boundary conditions for the compressible Navier--Stokes-Fourier system was applied for the spectral method.
Error estimates between exact and penalized numerical solutions were presented in \cite{ABF_penalty} for the incompressible Navier--Stokes equations and in
 \cite{Maury:2009, Saito-Zhou:2015,Zhang:2006,Zhou-Saito:2014}
for elliptic boundary problems. We mention also  Basari\'c et al.~\cite{Basaric} and Feireisl et al.~\cite{FNS}, where the penalty method was used to prove the existence of weak solutions. In \cite{FNS} the penalization method has been used to show the existence of a weak solution to the compressible Navier--Stokes equations on a moving domain and in \cite{Basaric} the existence of a weak solution to the  Navier--Stokes--Fourier system with the Dirichlet conditions on a rough (Lipschitz) domain was proved.

The present paper is organized in the following way. In Section~\ref{sec-DW} we introduce the concept of generalized, the so-called \emph{dissipative weak} solution for the Dirichlet problem \eqref{pde} and the corresponding penalized problem \eqref{ppde}. A finite volume method for the approximation of the penalized problem \eqref{ppde} is introduced in Section~\ref{sec_Notations}. 
Section~\ref{sec_convergence} and Section~\ref{sec_EE} are devoted to the main results of the paper: convergence analysis of the finite volume method as well as error estimates between the finite volume solutions of the penalized problem and the exact strong solution of the Navier--Stokes system with the Dirichlet boundary conditions. Here we consider the errors with respect to the mesh (discretization) parameter as well as the penalty parameter.
The paper is closed with Section~\ref{sec_num}, where several numerical experiments illustrate our theoretical results.

\section{Dissipative weak solution}\label{sec-DW}
Following \cite{FeLMMiSh} we define  \emph{dissipative weak (DW) solutions} to the Navier--Stokes system. We consider both, the penalized problem on $\tor$ \eqref{ppde} and the original Dirichlet problem on $\Of$ \eqref{pde}. We will show in Section~\ref{sec_convergence} that a DW solution arises as a natural limit of numerical approximations and builds a suitable tool for the convergence analysis.

\

\begin{Definition}[{\bf DW solution of the penalized problem}] \label{PDW}
	We say that $(\vr, \vu)$ is a DW solution of the Navier--Stokes system \eqref{ppde} if the following hold:
	\begin{itemize}
		\item
		{\bf Integrability.}
		\begin{equation}\label{pdw_s}
		\begin{aligned}
			& \vr \geq 0, \ \vr \in L^{\infty}(0,T; L^\gamma(\tor)),  \quad
			\vu \in L^2(0,T; W^{1,2} (\tor; \R^d)), \\
			& \vr \vu  \in L^{\infty}(0,T; L^{\frac{2 \gamma}{\gamma + 1}}(\tor; \R^d)), \quad
		\bS   \in L^2((0,T)\times \tor;  \R^{d\times d}_{sym}).
		\end{aligned}
		\end{equation}

	\item {\bf Energy inequality.}	
	\begin{multline}\label{pdw_E}
		 \left[ \intTdB{ \frac{1}{2} \vr |\vu|^2 + \Hc(\vr)    }\right]  (\tau, \cdot)  +  \int_0^{\tau}  \intTd{ \bS( \Grad \vu): \Grad \vu } \dt  \\
		+ \frac{1}{\penl} \int_0^{\tau} \intOs{ |\vu|^2 } \dt  + \int_{\tor} d \mathfrak{E}(\tau)  + \int_0^{\tau}\int_{\tor} d \mathfrak{D}(\tau) \dt \leq \intTdB{   \frac{|\widetilde{\vm}_{0}|^2}{2\tvr_{0}} + \Hc(\tvr_{0})  }
	\end{multline}
for any $\tau \in [0,T]$, where the energy and dissipation defect measures satisfy
	\begin{equation*}
		\mathfrak{E} \in  L^{\infty}(0,T; \mathcal{M}^+(\tor)), \quad  \mathfrak{D} \in  \mathcal{M}^+([0,T]\times \tor).
	\end{equation*}
			
\item {\bf Equation of continuity.}
		\begin{equation} \label{pdw_d}
		-\intTd{ \tvr_{0}  \phi(0,\cdot) } = \int_0^{T} \intTdB{  \vr \partial_t \phi + \vr \vu \cdot \Grad \phi  } \dt
		\end{equation}
	for any test function $\phi \in  C_c^{1}([0,T) \times \tor) $. 	
		
\item {\bf Momentum equation.}
\begin{multline}\label{pdw_m}
	- \intTd{ \widetilde{\vm}_{0} \cdot  \bfvarphi(0,\cdot) }
	 = \int_0^{T} \intTdB{  \vr \vu \cdot \partial_t  \bfvarphi + \vr \vu \otimes \vu : \Grad  \bfvarphi + p(\vr) \Div  \bfvarphi  } \dt \\
	 - \frac{1}{\penl} \int_0^{T} \intOs{ \vu \cdot  \bfvarphi } \dt
	 - \int_0^{T} \intTd{ \bS( \Grad \vu) : \Grad  \bfvarphi } \dt  + \int_0^{T} \int_{\tor}{\Grad  \bfvarphi   : d\mathfrak{R}(t)   } \dt
\end{multline}	
for any test function $ \bfvarphi \in  C_c^1([0,T) \times \tor; \R^d)$ with the Reynolds defect
	 \begin{equation*}
	 \mathfrak{R} \in  L^{\infty}(0,T; \mathcal{M}^+(\tor; \mathbb{R}^{d\times d}_{sym}))
	 \end{equation*}
satisfying\footnote{ $\mathcal{M}^+(\tor; \mathbb{R}^{d\times d}_{sym})$ denotes the set of symmetric square matrices of order $d$, where each component is a positive Radon measure on $\tor$. 
$\mbox{tr}[\mathfrak{R}]$ denotes the trace of the matrix $\mathfrak{R}$.}
	 \begin{equation}
	 	\underline{d} \mathfrak{E}\leq \mbox{tr}[\mathfrak{R}]  \leq \overline{d} \mathfrak{E} \quad \mbox{for some constants} ~ 0 < \underline{d} \leq \overline{d}.
	 \end{equation}

\end{itemize}
	
\end{Definition}

\begin{Definition}[{\bf DW solution of the Dirichlet problem}] \label{DW}
	We say that $(\vr, \vu)$ is a DW solution of the Navier--Stokes system \eqref{pde} if the following hold:
	\begin{itemize}
		\item
		{\bf Integrability.}
		\begin{align}\label{dw_s}
            & \vr \geq 0, \ \vr \in L^{\infty}(0,T; L^\gamma(\Of)),  \quad
			\vu \in L^2(0,T; W^{1,2} (\Of; \R^d)), \br
			& \vr \vu  \in L^{\infty}(0,T; L^{\frac{2 \gamma}{\gamma + 1}}(\Of; \R^d)), \quad
		\bS   \in L^2((0,T)\times \Of;  \R^{d\times d}_{sym}).
		\end{align}
		
	\item {\bf Energy inequality.}	
	\begin{align}\label{dw_E}
		 \left[ \intOf{\left( \frac{1}{2} \vr |\vu|^2 + \Hc(\vr) \right)   }\right]  (\tau, \cdot) & +  \int_0^{\tau}  \intOf{    \bS( \Grad \vu) : \Grad \vu } \dt  \br
		 + \int_{\Ov \Of} d \mathfrak{E}(\tau)  +& \int_0^{\tau} \int_{\Ov \Of} d \mathfrak{D}(\tau)  \dt \leq \intOfB{   \frac{1}{2} \frac{|\vm_0|^2}{\vr_0} + \Hc(\vr_0)  }
	\end{align}
for any $\tau \in [0,T]$, where the energy and dissipation defect measures satisfy
	\begin{equation*}
		\mathfrak{E} \in  L^{\infty}(0,T; \mathcal{M}^+(\Ov \Of)),
		\quad  \mathfrak{D} \in  \mathcal{M}^+([0,T]\times \Ov \Of).
	\end{equation*}
			
		\item {\bf Equation of continuity.}
		\begin{equation} \label{dw_d}
		-\intOf{ \vr_{0} \cdot \phi(0,\cdot) } = \int_0^{T} \intOfB{  \vr \partial_t \phi + \vr \vu \cdot \Grad \phi  } \dt
		\end{equation}
	for any test function $\phi \in C_c^{1}([0,T) \times \Ov{\Of})$.

		\item {\bf Momentum equation.}
\begin{align}\label{dw_m}
	- \intOf{ \vm_0 \cdot  \bfvarphi(0,\cdot) }
	& = \int_0^{T} \intOfB{  \vr \vu \cdot \partial_t  \bfvarphi + \vr \vu \otimes \vu : \Grad  \bfvarphi + p(\vr) \Div  \bfvarphi  } \dt \br
	&- \int_0^{T} \intOf{  \bS( \Grad \vu) : \Grad  \bfvarphi } \dt  + \int_0^{T} \int_{\Ov\Of}{\Grad  \bfvarphi   : d\mathfrak{R}(t)  }\dt
\end{align}	
for any test function $\bfvarphi \in  C_c^1([0,T) \times \Of; \R^d) 
$ with the Reynolds defect
	 \begin{equation*}
	 \mathfrak{R} \in  L^{\infty}(0,T; \mathcal{M}^+(\Ov\Of; \mathbb{R}^{d\times d}_{sym}))
	 \end{equation*}
	 satisfies
	 \begin{equation}
	 	\underline{d} \mathfrak{E}\leq \mbox{tr}[\mathfrak{R}] \leq \overline{d} \mathfrak{E} \quad \mbox{for some constants} ~ 0 < \underline{d} \leq \overline{d}.
	 \end{equation}

\end{itemize}
	
\end{Definition}

\begin{Remark}
The existences of dissipative solutions to the penalized problem \eqref{ppde} as well as the original Dirichlet problem \eqref{pde} will be  proved in Theorems \ref{THM1} and  \ref{THM2}, respectively.
\end{Remark}

\section{Numerical scheme}\label{sec_Notations}
In this section we generalize the finite volume method proposed in \cite{FLMS_FVNS} to approximate the penalized problem \eqref{ppde}.

\subsection{Space discretization}

\vspace{0.1cm}

\paragraph{\bf Mesh.} Let $\grid$ be a uniform structured (square for $d=2$ or cuboid for $d=3$) mesh of $\tor$ with $h$ being the mesh parameter.
We denote by $\faces$ the set of all faces of $\grid$ and by $\facesi$, $i=1,\dots, d$, the set of all faces that are orthogonal to $\ve_i$ -- the basis vector of the canonical system.
Moreover, we denote by $\facesK$ the set of all faces of a generic element $K\in\grid$. Then, we write $\sigma= K|L$ if $\sigma \in \faces$ is the common face of neighbouring elements $K$ and $L$. Further, we denote by $\vx_K$ and $|K|=h^d$ (resp. $\vx_\sigma$ and $|\sigma|=h^{d-1}$) the center and the Lebesgue measure of an element $K\in\grid$ (resp. a face $\sigma \in \faces$), respectively.

With the above notations, we further define $\Ofh$ as the set of all elements inside the physical domain $\Of$, i.e.
\begin{align*}
\Ofh = \left\{ K \in \grid~ \big| ~ K\subset \Of \right\} \quad \mbox{and}\quad \Osh = \grid \setminus \Ofh,
\end{align*}
cf. Figure \ref{fig:domain}.
In what follows we write that $a \aleq b$ for $a, b \in \R$, if there exists a generic constant $C > 0$ independent on the mesh and penalty parameters $h, \penl$, such that $a\leq Cb$. Further, we write
$a \approx b$ if $a \aleq b$, $b \aleq a$.
It is easy to verify that
\begin{equation}
\Ofh \subset \Of, \quad \Os \subset \Osh \quad \mbox{and} \quad \mbox{dist}(\partial \Of, \Ofh) \approx h,\quad |\Of  \setminus \Ofh| \approx h, \quad |\Osh \setminus \Os| \approx h,
\end{equation}
where generic constants depend on the geometry of the fluid domain $\Of$.

\paragraph{\bf Dual mesh.}  For any $\sigma=K|L\in \facesi$, we define a dual  cell  $D_\sigma := D_{\sigma,K} \cup D_{\sigma,L}$, where $D_{\sigma,K}$ (resp.~$D_{\sigma,L}$) is defined as
\[
D_{\sigma,K} = \left\{ x \in K \mid  x_i\in\co{(x_K)_i}{(x_\sigma)_i}\right\} \ \mbox{ for any } \ \sigma \in \facesi, \; i=1,\dots,d
\]
 with $\co{A}{B} \equiv [ \min\{A,B\} , \max\{A,B\}]$.
We refer to Figure~\ref{fig:mesh} for a two-dimensional illustration of a dual cell.

\begin{figure}[!h]
\centering
\begin{tikzpicture}[scale=1.0]
\draw[-,very thick](0,-2)--(5,-2)--(5,2)--(0,2)--(0,-2)--(-5,-2)--(-5,2)--(0,2);

\draw[-,very thick, green=90!, pattern=north east lines, pattern color=green!30] (0,-2)--(2.5,-2)--(2.5,2)--(0,2)--(0,-2);
\draw[-,very thick, blue=90!, pattern= north west lines, pattern color=blue!30] (0,-2)--(0,2)--(-2.5,2)--(-2.5,-2)--(0,-2);

\path node at (-3.5,0) { $K$};
\path node at (3.5,0)  { $L$};
\path node at (-2.5,0) {$ \bullet$};
\path node at (-2.8,-0.3) {$ \vx_K$};
\path node at (2.5,0) {$\bullet$};
\path node at (2.7,-0.3) {$ \vx_L$};
\path node at (0,0) {$\bullet$};
\path node at (0.3,-0.3) {$ \vx_\sigma$};

\path (-0.4,0.8) node[rotate=90]{ $\sigma=K|L$}; 
 \path (-1.5,1.4) node[] { $D_{\sigma,K}$};
 \path (1.5,1.4) node[] { $D_{\sigma,L}$};
 \end{tikzpicture}
\caption{Dual mesh $D_\sigma = D_{\sigma,K} \cup D_{\sigma,L}$.}
 \label{fig:mesh}
\end{figure}
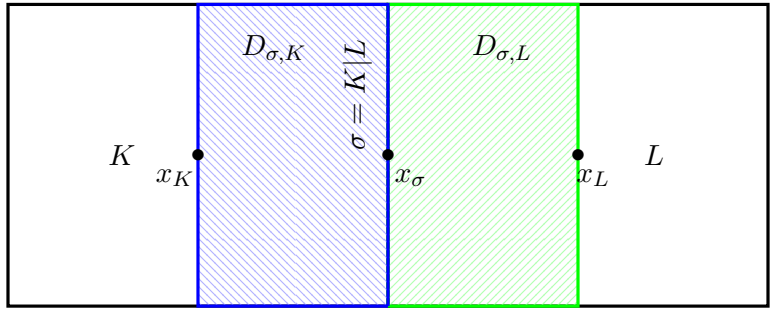

\paragraph{\bf Function space.}
We introduce a space $Q_h$ consisting of piecewise constant functions on $\grid$.
The notation $Q_h^n$ stands for the corresponding $n$-dimensional function space.
Further, we define the standard projection operator $\Pim$ associated with $Q_h$
\begin{equation*}
\Pim: L^1(\tor) \to Q_h, \quad  \Pim  \phi (x) = \sum_{K \in \grid}  \frac{ \mathds{1}_{K}(x) }{|K|} \int_K \phi \dx .
\end{equation*}

\paragraph{Discrete differential operators.}
To begin, we introduce the average and jump operators for $v \in Q_h$
\[
\avs{v}(x) = \frac{v^{\rm in}(x) + v^{\rm out}(x) }{2},\ \ \ \
\jump{ v }(x)  = v^{\rm out}(x) - v^{\rm in}(x) \]
with
\[
v^{\rm out}(x) = \lim_{\delta \to 0+} v(x + \delta \vc{n}),\ \ \ \
v^{\rm in}(x) = \lim_{\delta \to 0+} v(x - \delta \vc{n}),\
\]
whenever $x \in \sigma \in \faces$  and $\vc{n}$ is the outer normal vector to $\sigma$. In addition, if $\sigma \in \facesi$, we also write $\avs{v}$ and $\jump{v}$ as $\avsi{v}$ and $\jumpi{v}$, respectively.

Next, we define the following discrete gradient, divergence and Laplace operators for piecewise constant functions $r_h \in Q_h, \; \bfv_h = (v_{1,h},\cdots, v_{d,h})\in Q_h^d$
\begin{equation*}
\begin{aligned}
\Gradd r_h(\vx) & =  \sum_{\sigma \in \faces} \left( \Gradd r_h \right)_{D_\sigma}  \mathds{1}_{D_\sigma}{(\vx)}, \quad&
 \left( \Gradd r_h \right)_{D_\sigma}  &=  \frac{1}{h}  \jump{r_h} \vc{n} , \quad  \Gradd \bfv_h = \left( \Gradd v_{1,h}, \dots,   \Gradd v_{d,h}\right)^T ,
 \\
\Divh \bfv_h(\vx) &= \sum_{K\in\grid} \left( \Divh \bfv_h\right)_K \mathds{1}_K{(\vx)}, \quad&
\left( \Divh \bfv_h\right)_K & =    \frac{1}{h} \sum_{\sigma \in \facesK} \avs{\bfv_h} \cdot \vc{n} ,
\\
\Delta_h r_h(\vx) &=  \sum_{K \in \grid} \left(\Delta_h r_h\right)_K  \mathds{1}_K{(\vx)}, \quad&
\left(\Delta_h r_h\right)_K &=   \frac{1}{h^2}  \sum_{\sigma \in \facesK} \jump{r_h} .
\end{aligned}
\end{equation*}
It is easy to verify the interpolation errors  estimates
\begin{equation}\label{proj}
\abs{ \jump{\Pim \phi  } }\aleq h \norm{\Grad \phi}_{L^\infty(\Omega)}, \;
\norm{ \Pim \phi  - \phi }_{L^\infty(\Omega)}  \aleq  h \norm{\Grad \phi}_{L^\infty(\Omega)}, \;
\norm{\Gradd \Pim \phi  }_{L^\infty(\Omega)}  \aleq  \norm{ \Grad \phi   }_{L^\infty(\Omega)}
\end{equation}
for any $\phi \in W^{1,\infty}(\Omega)$.

\paragraph{Time discretization.}
Given a time step $\TS >0$ we divide the time interval $[0,T]$ into $N_T=T/\TS$ uniform parts,  and denote $t_k= k\TS.$
Consider a pointwise function $v$, which is only known at time level $t_k,\, k=0,\dots,N_T$.
Let $L_{\TS}(0,T)$ denote a space of all piecewise constant functions $v$, such that
\begin{align*}
&  v(t) =v(t_0)  \ \mbox{ for } \  t < \Delta t, \quad  v(t)=v(t_k) \ \mbox{ for } \ t\in [k\TS,(k+1)\TS), \ \   k=1,\cdots, N_T.
\end{align*}
Further, for $v \in L_{\TS}(0,T)$ we define the backward Euler time discretization operator $D_t$  as follows
\[
 D_t v(t) = \frac{v (t) - v^\triangleleft}{\TS} \quad \mbox{with} \quad v^\triangleleft  := v (t - \TS).
\]
If $\vv$ is a vector function then $D_t \vv$ acts componentwisely.

\subsection{Finite volume method for the  penalized problem}
We  are now ready to propose a finite volume method for the penalized problem \eqref{ppde} that will be presented in the weak form.

\begin{Definition}[{\bf Finite volume method}]
  We say that
$(\vrh^{\penl},\vuh^{\penl}) \in L_{\TS}(0,T; Q_h^{d+1} )$ is a finite volume approximation of the penalized problem \eqref{ppde} if the following algebraic equations hold
\begin{subequations}\label{VFV}
\begin{align}\label{VFV_D}
&\intTd{ D_t \vrh^{\penl} \, \phi_h} - \intfaces{  F_h^{\viso} (\vrh^{\penl} ,\vuh^{\penl} )
\jump{\phi_h}   } = 0, \hspace{4.5cm}  \mbox{for all}\ \phi_h \in Q_h, \\ \label{VFV_M}
&\intTd{ D_t  (\vrh^{\penl}  \vuh^{\penl} ) \cdot  \bfvarphi_h }  - \intfaces{ {\bf F}_h^{\viso}  (\vrh^{\penl}  \vuh^{\penl} ,\vuh^{\penl} ) \cdot \jump{ \bfvarphi_h }   } - \intTd{ p_h^{\penl}  \Divh  \bfvarphi_h }
+ \frac{1}{\penl}\intOsh{\vuh^{\penl} \cdot \bfvarphi_h}
 \br
 &\hspace{1cm}+  \mu  \intTd{ \Gradd \vuh^{\penl}  : \Gradd \bfvarphi_h }
+ \nu  \intTd{\Divh   \vuh^{\penl}   \; \Divh \bfvarphi_h }
=  0,
\hspace{1.5cm} \mbox{for all }  \bfvarphi_h \in Q_h^d,
 \end{align}
\end{subequations}
where
\begin{align*}
 \vrh^{\penl}(0, \cdot) = \Pim\tvr_{0}, \quad (\vrh^{\penl}\vuh^{\penl})(0, \cdot) = \Pim \widetilde{\vm}_{0}, \quad \nu = \lambda+\frac{d-2}{d}\mu.
\end{align*}
The numerical flux function $F_h^{\viso} (r_h,\vuh)$ reads
\begin{align}\label{num_flux}
& F_h^{\viso} (r_h,\vuh)
={Up}[r_h, \vuh] - h^{\viso} \jump{ r_h }, \\
&Up [r_h, \bm{u}_h]   = r_h^{\rm up} \avs{\vu_h} \cdot \vn, \quad
r_h^{\rm up} =
\begin{cases}
(r_h)^{\rm in} & \mbox{if} \ \avs{\bm{u}} \cdot \vc{n} \geq 0, \\
(r_h)^{\rm out} & \mbox{if} \ \avs{\bm{u}} \cdot \vc{n} < 0.
\end{cases} \nonumber
\end{align}
Here $ \viso >-1$ is the artificial viscosity parameter.
\end{Definition}

\begin{Remark}
In what follows we shall write $(\vrh,\vuh)$ and $(\vrh^0,\vuh^0)$ instead of more precise notation $(\vrh^{\penl},\vuh^{\penl})$ and $(\vrh^{\penl}(0,\cdot), \vuh^{\penl}(0,\cdot))$ for simplicity, if there is no confusion. 
Consequently, we shall work with the couple $(\vrh,\vuh)$ that represents the (piecewise constant in space and time)  discrete density and velocity, respectively. Moreover, we set $\mh = \vrh \vuh$ and $p_h=p(\vrh)$.
\end{Remark}

\section{Convergence}\label{sec_convergence}
In this section we study the convergence of the finite volume method \eqref{VFV}. To this goal we first discuss its stability and consistency.
\subsection{Stability}\label{sec:stability_ns}
We begin with the following lemma reported by Feireisl et al. \cite[Lemmas 11.2 and 11.3]{FeLMMiSh}.
\begin{Lemma}[Properties {\cite[Lemmas 11.2 and 11.3]{FeLMMiSh}}]
\label{lem_p1}
Let $\tvr_{0} > 0$. Then there exists at least one solution to the FV method \eqref{VFV}. Moreover, any solution $(\vrh ,\vuh )$ to \eqref{VFV_D} satisfies for all $t \in(0,T)$ that
\begin{itemize}
\item Positivity of the density.
\begin{equation*}
\vrh(t)>0.
\end{equation*}
\item Mass conservation.
\begin{equation*}
 \intTd{\vrh(t)} = \intTd{\tvr_{0}}.
\end{equation*}
\item Internal energy balance.
\begin{multline}\label{IEB}
\intTd{ D_t \Hc(\vrh )    }
 + \intTd{  p(\vrh)  \Divh \vuh     }
\\
 = - \intTd{ \frac{ \TS}{2} \Hc''({\vrh^{\star}}) | D_t \vrh  |^2    }
  -  \intfaces{ \left( h^\viso + \frac12 |\avs{\vuh } \cdot \vn | \right) \Hc''( \vr_{h,\dagger} ) \jump{ \vrh  }^2  },
\end{multline}
where $\vrh^{\star} \in \co{\vrh^\triangleleft }{\vrh }$ and $\vr_{h,\dagger} \in \co{\vrh^{\rm in}}{\vrh^{\rm out}}$ for any $ \sigma  \in \faces$.
\end{itemize}
\end{Lemma}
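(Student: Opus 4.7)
The plan is to establish the four claims in turn, noting at the outset that the analogous statements for the unpenalized version of \eqref{VFV} are proved in \cite{FeLMMiSh}; the penalty term $\frac{1}{\penl}\ind_{\Osh}\vuh$ is linear and positive in $\vuh$ and enters neither the continuity equation nor any of the energy manipulations below, so it does not affect positivity, mass conservation, or the internal energy balance. For existence at each time level, I would cast \eqref{VFV} at the $n$-th step as a fixed-point problem on the finite-dimensional space $Q_h^{d+1}$, using $(\vrh^\triangleleft, \vuh^\triangleleft)$ as the starting data. A priori bounds for the iterates come from the discrete energy inequality obtained by combining \eqref{IEB} with the test $\bfvarphi_h = \vuh$ in \eqref{VFV_M}, so that Brouwer's fixed point theorem (or topological degree) applies.

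For positivity of the density I would argue cell-by-cell. Let $K_\star \in \grid$ be a cell where $\vrh$ attains its minimum at the current time level. Testing \eqref{VFV_D} with $\phi_h = \ind_{K_\star}$ and using that the upwind flux $Up[\vrh,\vuh]$ on each outgoing face of $K_\star$ picks up $\vrh(K_\star)$ while on each incoming face it picks up a neighbour value $\geq \vrh(K_\star)$, together with the artificial-viscosity contribution $-h^\viso\jump{\vrh}$, one finds
\[
|K_\star| \frac{\vrh(K_\star) - \vrh^\triangleleft(K_\star)}{\TS} \ageq -C\,\vrh(K_\star)
\]
for some $C \geq 0$. This implies $\vrh(K_\star) \geq \vrh^\triangleleft(K_\star)/(1 + C\TS) > 0$ by induction, starting from $\Pim\tvr_{0} > 0$. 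Mass conservation is then immediate: choose $\phi_h \equiv 1$ in \eqref{VFV_D}; since $\jump{1} = 0$ the flux term drops out, giving $\int_{\tor} D_t \vrh\dx = 0$, hence $\int_{\tor}\vrh(t)\dx = \int_{\tor}\tvr_{0}\dx$.

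The internal energy balance \eqref{IEB} is the main computational step and the place where most care is needed. The idea is to test the continuity equation \eqref{VFV_D} with the renormalizing variable $\phi_h = \Hc'(\vrh)$. For the time-derivative term, the strict convexity of $\Hc$ and second-order Taylor expansion yield
\[
D_t \vrh \cdot \Hc'(\vrh) = D_t \Hc(\vrh) + \frac{\TS}{2}\Hc''(\vrh^\star)\,|D_t\vrh|^2
\]
with $\vrh^\star \in \co{\vrh^\triangleleft}{\vrh}$, accounting for the first dissipative term in \eqref{IEB}. For the flux term, I would split the upwind flux as
\[
F_h^{\viso}(\vrh,\vuh) = \avs{\vrh}\avs{\vuh}\cdot\vn \;-\;\tfrac{1}{2}|\avs{\vuh}\cdot\vn|\jump{\vrh}\;-\;h^{\viso}\jump{\vrh},
\]
combine with $\jump{\Hc'(\vrh)} = \Hc''(\vr_{h,\dagger})\jump{\vrh}$ for some $\vr_{h,\dagger} \in \co{\vrh^{\rm in}}{\vrh^{\rm out}}$, and perform a discrete integration by parts on the centered piece to convert it into $\int p(\vrh)\Divh\vuh\,\dx$ modulo a symmetric remainder that likewise collapses into a $\Hc''(\vr_{h,\dagger})\jump{\vrh}^2$ contribution.

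The main obstacle is precisely the algebraic bookkeeping that ensures the various Taylor remainders coming from the convective centered part, from the upwind bias $\tfrac{1}{2}|\avs{\vuh}\cdot\vn|\jump{\vrh}$, and from the artificial viscosity $h^\viso\jump{\vrh}$ combine with a \emph{common} intermediate value $\vr_{h,\dagger}$ into the single face term displayed in \eqref{IEB}. This is the key technical identity carried out in \cite[Lemmas 11.2--11.3]{FeLMMiSh}, whose proof transfers verbatim to the present setting since the penalty term does not interact with it.
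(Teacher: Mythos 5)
Your proposal is correct and matches the paper's treatment: the paper itself offers no proof but simply cites \cite[Lemmas 11.2 and 11.3]{FeLMMiSh}, implicitly relying on the fact that the penalty term $\frac{1}{\penl}\mathds{1}_{\Osh}\vuh$ enters only the momentum equation and so leaves positivity, mass conservation, and the renormalized (internal energy) identity for the continuity equation untouched — exactly the observation you make explicit. Your reconstruction of the cited arguments (fixed point for existence, minimum-cell monotonicity for positivity, $\phi_h\equiv 1$ for mass conservation, testing \eqref{VFV_D} with $\Hc'(\vrh)$ plus second-order Taylor remainders for \eqref{IEB}) follows the same route as the reference, so there is nothing to flag.
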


\begin{Lemma}[Energy stability]\label{thm_ST}
Let $(\vrh ,\vuh )$ be a  numerical solution of the FV method \eqref{VFV}. Then  it holds
\begin{align}\label{ST}
& D_t \intTd{ \left(\frac{1}{2}  \vrh  |\vuh |^2 +\Hc(\vrh) \right) }
  +  \intTdB{ \mu |\Gradd \vuh|^2 + \nu |\Divh \vuh|^2}
\br
& \quad \quad = -\frac{1}{\penl} \intOsh{  |\vuh|^2   } - D_{num}
= -\frac{1}{\penl} \intOs{  |\vuh|^2   } - D_{num}^{new},
\end{align}
where $D_{num}^{new}\geq D_{num} \geq 0$ represent the numerical dissipations, which read
\[
\begin{split}
D_{num}^{new} & = D_{num} + \frac{1}{\penl} \int_{\Osh\setminus \Os}  |\vuh|^2  \dx, \\
 D_{num} &=
  h^\viso \intfaces{  \avs{ \vrh  }  \abs{\jump{\vuh }}^2 }
+ \frac{\TS}{2} \intTd{ \vrh^\triangleleft|D_t \vuh |^2  }
+ \frac12 \intfaces{ \vrh^{\rm up} |\avs{\vuh } \cdot \vc{n} |  \abs{\jump{ \vuh }}^2   }
\\&+  \intTd{ \frac{ \TS}{2} \Hc''({\vrh^{\star}}) | D_t \vrh  |^2    }
 +  \intfaces{ \left(h^{\viso} +\frac12 |\avs{\vuh } \cdot \vn | \right)  \Hc''( \vr_{h,\dagger} ) \jump{ \vrh  }^2  }.
\end{split}
\]
\end{Lemma}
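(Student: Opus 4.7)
The plan is to derive the total energy balance by combining a discrete kinetic energy identity (obtained by testing the momentum equation with $\vuh$) with the internal energy balance already stated in Lemma~\ref{lem_p1}.

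\textbf{Step 1: Kinetic energy identity.} I would take $\bfvarphi_h = \vuh$ in the discrete momentum equation \eqref{VFV_M}. The key algebraic manipulation is the discrete product rule: writing $\vrh \vuh = \vrh^\triangleleft \vuh^\triangleleft + \TS D_t(\vrh \vuh)$ and using the continuity equation \eqref{VFV_D} with $\phi_h = \tfrac12 |\vuh|^2$, a direct computation shows
\begin{equation*}
\intTd{ D_t(\vrh \vuh) \cdot \vuh } \;=\; D_t \intTd{ \tfrac12 \vrh |\vuh|^2 } + \frac{\TS}{2}\intTd{ \vrh^\triangleleft |D_t \vuh|^2 } + \tfrac12 \intfaces{ F_h^\viso(\vrh,\vuh) \jump{|\vuh|^2} } - \tfrac12 \intfaces{ F_h^\viso(\vrh,\vuh) \jump{|\vuh|^2} },
\end{equation*}
where the extra flux terms cancel against the convective contribution. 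More precisely, the convective flux $-\intfaces{\mathbf{F}_h^\viso(\vrh\vuh,\vuh)\cdot \jump{\vuh}}$ combined with the above, after using the elementary identity $a\cdot(a-b) = \tfrac12|a|^2 - \tfrac12|b|^2 + \tfrac12|a-b|^2$ together with the upwind choice, produces exactly the positive dissipation
\begin{equation*}
\tfrac12 \intfaces{ \vrh^{\rm up} |\avs{\vuh}\cdot \vn| \,|\jump{\vuh}|^2 } + h^\viso \intfaces{ \avs{\vrh} |\jump{\vuh}|^2 }.
\end{equation*}

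\textbf{Step 2: Handling pressure, viscosity and penalty.} The pressure term $-\intTd{p_h \Divh \vuh}$ survives untouched and will be matched with the internal energy balance. The viscous terms yield the desired $\mu \int |\Gradd \vuh|^2 + \nu \int |\Divh \vuh|^2$ directly. The penalty term gives $\frac{1}{\penl}\intOsh{|\vuh|^2}$.

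\textbf{Step 3: Adding the internal energy balance.} Adding \eqref{IEB} to the kinetic energy identity makes the pressure contributions cancel and yields
\begin{equation*}
D_t \intTd{ \left(\tfrac12 \vrh |\vuh|^2 + \Hc(\vrh)\right) } + \intTdB{ \mu |\Gradd \vuh|^2 + \nu |\Divh \vuh|^2 } + \frac{1}{\penl}\intOsh{|\vuh|^2} + D_{num} = 0,
\end{equation*}
where $D_{num}$ collects the five dissipative contributions stated in the lemma (two from the continuity renormalization in \eqref{IEB}, two from the upwind/artificial viscosity in the momentum convection, and one from the discrete time derivative $\TS \vrh^\triangleleft |D_t \vuh|^2/2$). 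Positivity of $D_{num}$ follows from $\Hc''>0$, $\vrh>0$, $\avs{\vrh}>0$, and $\vrh^{\rm up}>0$ (Lemma~\ref{lem_p1}). The second equality in \eqref{ST} is obtained by splitting $\intOsh{|\vuh|^2} = \intOs{|\vuh|^2} + \int_{\Osh\setminus \Os}|\vuh|^2\dx$ and absorbing the last term into $D_{num}^{new}$.

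\textbf{Main obstacle.} The routine but delicate point is Step~1: keeping track of the face integrals when converting $\intTd{ D_t(\vrh\vuh)\cdot \vuh }$ into $D_t \int \tfrac12 \vrh|\vuh|^2$. One must use the continuity equation \eqref{VFV_D} with the carefully chosen piecewise-constant test $\tfrac12 |\vuh|^2$, and exploit the upwind structure of $F_h^\viso$ so that the cross-terms combine into the signed-definite quadratic form $\tfrac12 \vrh^{\rm up}|\avs{\vuh}\cdot \vn||\jump{\vuh}|^2$. Everything else is bookkeeping.
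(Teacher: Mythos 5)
Your proposal is correct and follows essentially the same route as the paper: the paper's proof simply cites the kinetic energy balance from \cite[equation (3.4)]{FLMS_FVNS} (obtained exactly as you describe, by testing \eqref{VFV_M} with $\vuh$ and \eqref{VFV_D} with $\tfrac12|\vuh|^2$ and combining the flux terms via the upwind structure) and then adds the internal energy balance \eqref{IEB} so that the pressure terms cancel. The only difference is that you sketch the derivation of the kinetic energy identity that the paper outsources to the reference; the splitting $\intOsh{|\vuh|^2}=\intOs{|\vuh|^2}+\int_{\Osh\setminus\Os}|\vuh|^2\dx$ for the second equality is also exactly what is intended.
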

\begin{proof}
Following the calculation in \cite[equation (3.4)]{FLMS_FVNS} we obtain the kinetic energy balance
\begin{align*}
 D_t \intTd{ \frac{1}{2} \vrh |\vuh |^2  }
+   \mu   \intTd{  | \Gradd \vuh |^2}   + \nu \intTd{  |\Divh \vuh |^2}
+\frac{1}{\penl} \intOsh{|\vuh|^2}
- \intTd{p_h  \Divh \vuh  }
\\
+  h^\viso \intfaces{  \avs{ \vrh  }  \abs{ \jump{ \vuh  }}^2   }
+ \frac{\TS}{2} \intTd{ \vrh^\triangleleft|D_t \vuh |^2  }
+ \frac12 \intfaces{ \vrh ^{\rm up} |\avs{\vuh } \cdot \vc{n}|  \abs{ \jump{ \vuh  }}^2   } =0.
\end{align*}
Combining this with the internal energy balance \eqref{IEB} finishes the proof.
\end{proof}

Next, thanks to the energy balance \eqref{ST} and the Sobolev-Poincar\'e inequality, see Lemma \ref{lmSP}, we obtain  the following a priori  bounds for the finite volume solutions $\{\vrh, \vuh \}_{h \searrow 0}$. 
\begin{Lemma}[Uniform bounds]\label{lm_ub}
Let $(\vrh ,\vuh )$ be a  numerical solution of the FV method \eqref{VFV}. Then  the following hold
\begin{subequations}\label{ap}
\begin{align}\label{ap1}
& \norm{\vrh}_{L^\infty(0,T; L^\gamma(\tor))}  + \norm{\vrh \vuh }_{L^\infty (0,T; L^{\frac{2\gamma}{\gamma+1}}(\tor;\R^d)) }  + \norm{p_h}_{L^\infty (0,T; L^1(\tor))} + \br
& \quad + \norm{\vuh}_{L^2(0,T; L^p(\tor;\R^d))} + \norm{\Gradd \vuh}_{L^2((0,T)\times\tor;\R^{d\times d})} + \norm{\Divh \vuh}_{L^2((0,T)\times\tor)}  \leq C ,
\end{align}
\begin{align} \label{ap3}
&  h^\viso \int_0^T \intfaces{  \avs{ \vrh  }  \abs{\jump{\vuh }}^2 } \dt +  \int_0^T  \intfaces{ \left(h^{\viso} +\frac12 |\avs{\vuh } \cdot \vn | \right) \Hc''( \vr_{h,\dagger} ) \jump{ \vrh  }^2 } \dt \leq C,
\end{align}
\begin{equation}\label{ap2}
\frac{1}{\penl} \norm{\vuh}^2_{L^2((0,T)\times\Os;\R^d)}  \leq \frac{1}{\penl} \norm{\vuh}^2_{L^2((0,T)\times\Osh;\R^d)} \leq C,
\end{equation}
\end{subequations}
where $\vr_{h,\dagger} \in \co{\vrh^{\rm in}}{\vrh^{\rm out}}$. The parameter $p\in[1,\infty)$ for $d=2$ and $p=6$ for $d=3$.
The generic constant $C$ depends on the initial mass $M_0 := \intTd{\tvr_0} > 0$ and the initial energy $E_0 := \intTd{\left( \frac12 \tvr_0 \abs{\tvu_0}^2 + \Hc(\tvr_0) \right)} > 0$, but it is independent of  the computational parameters $(h, \TS)$ as well as the penalty  parameter $\penl$.
\end{Lemma}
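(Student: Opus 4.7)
The plan is to integrate the energy balance \eqref{ST} of Lemma~\ref{thm_ST} in the (discrete) time variable and then read off each listed bound. Integrating over $[0,\tau]$ and exploiting the non-negativity of $D_{num}^{new}$ and of the penalty term yields
\begin{equation*}
\intTdB{\tfrac12\vrh|\vuh|^2 + \Hc(\vrh)}(\tau) + \int_0^{\tau}\!\intTdB{\mu|\Gradd\vuh|^2 + \nu|\Divh\vuh|^2}\dt + \tfrac{1}{\penl}\int_0^{\tau}\!\intOs{|\vuh|^2}\dt + \int_0^{\tau}\! D_{num}\,\dt \le E_0
\end{equation*}
for every $\tau\in[0,T]$, where $E_0=\intTd{\tfrac12 \tvr_0|\tvu_0|^2 + \Hc(\tvr_0)}$. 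This single inequality immediately yields, uniformly in $(h,\TS,\penl)$: $\vrh\in L^\infty(0,T;L^\gamma(\tor))$ (since $\Hc(\vr)\approx\vr^\gamma$), $\vrh|\vuh|^2\in L^\infty(L^1)$, $\Gradd\vuh\in L^2$ (hence also $\Divh\vuh\in L^2$ via the elementary bound $|\Divh\vuh|\le\sqrt d\,|\Gradd\vuh|$), the penalty estimate~\eqref{ap2}, and every non-negative summand of $D_{num}$, which is precisely~\eqref{ap3}. The pressure bound $p_h=a\vrh^\gamma\in L^\infty(L^1)$ then follows directly from the $L^\gamma$ estimate on~$\vrh$.

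The momentum bound $\vrh\vuh\in L^\infty(0,T;L^{\frac{2\gamma}{\gamma+1}}(\tor;\R^d))$ is obtained by a standard H\"older interpolation. Writing
\[
|\vrh\vuh|^{\frac{2\gamma}{\gamma+1}} = \vrh^{\gamma/(\gamma+1)}\bigl(\vrh|\vuh|^2\bigr)^{\gamma/(\gamma+1)}
\]
and applying H\"older with conjugate exponents $\gamma+1$ and $(\gamma+1)/\gamma$ reduces the claim to the already established bounds $\vrh\in L^\infty(L^\gamma)$ and $\vrh|\vuh|^2\in L^\infty(L^1)$.

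The remaining estimate, $\vuh\in L^2(0,T;L^p(\tor;\R^d))$, requires the discrete Sobolev--Poincar\'e inequality (Lemma~\ref{lmSP}). Since we work on the torus, there is no boundary trace to anchor a direct Poincar\'e inequality, and no mean-zero condition on $\vuh$ is available. The role of anchor is played by the penalty set $\Osh$: estimate~\eqref{ap2} provides $\|\vuh\|_{L^2(\Osh)}^2\aleq\penl$, and a discrete Sobolev--Poincar\'e inequality of the form
\[
\|\vuh\|_{L^p(\tor)}^2 \aleq \|\Gradd\vuh\|_{L^2(\tor)}^2 + \|\vuh\|_{L^2(\Osh)}^2
\]
combined with integration in time then delivers the $L^2(L^p)$ bound, with $p\in[1,\infty)$ for $d=2$ and $p=6$ for $d=3$ dictated by the Sobolev embedding $W^{1,2}\hookrightarrow L^p$. \emph{The main obstacle} is precisely this last step: one must ensure that the constants in the discrete Sobolev--Poincar\'e inequality depend only on the fluid geometry $\Of$ and not on $h$ or $\penl$. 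Once that functional-analytic ingredient is at hand, all the bounds in~\eqref{ap} follow by the elementary manipulations above.
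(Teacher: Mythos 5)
Your overall structure matches the paper's (the paper itself gives only a one-line justification: the bounds follow from the energy balance \eqref{ST} together with Lemma~\ref{lmSP}). Integrating \eqref{ST} in time, reading off \eqref{ap2}, \eqref{ap3}, the $L^\infty L^\gamma$ bound on $\vrh$, the $L^\infty L^1$ bounds on $\vrh|\vuh|^2$ and $p_h$, the $L^2$ bounds on $\Gradd\vuh$, $\Divh\vuh$, and then interpolating for $\vrh\vuh\in L^\infty L^{2\gamma/(\gamma+1)}$ is exactly the intended argument, and all of that part of your write-up is correct.

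The one genuine problem is the step you yourself flag as ``the main obstacle'': the $L^2(0,T;L^p)$ bound on $\vuh$. You cite Lemma~\ref{lmSP} but then state a different inequality, anchored by $\norm{\vuh}_{L^2(\Osh)}^2$, which is not established anywhere in the paper and whose uniformity in $h$ and $\penl$ you leave open. This detour is unnecessary: Lemma~\ref{lmSP} as actually stated is the \emph{density-weighted} Sobolev--Poincar\'e inequality, anchored by $\intTd{\vrh|f_h|^2}$ rather than by an integral over the penalty set. Its hypotheses are exactly what the preceding stability results provide uniformly in $(h,\TS,\penl)$: mass conservation (Lemma~\ref{lem_p1}) gives $\intTd{\vrh}=M_0>0$, and the energy bound gives $\intTd{\vrh^\gamma}\aleq E_0$. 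Applying it with $f_h=\vuh$ yields
\begin{equation*}
\norm{\vuh}_{L^p(\tor)}^2 \leq c(M_0,E_0,\gamma)\left( \norm{\Gradd\vuh}_{L^2(\tor)}^2 + \intTd{\vrh|\vuh|^2} \right),
\end{equation*}
and integrating in time, the first term is controlled by the dissipation bound and the second by $2E_0$ uniformly in $t$. No anchoring on $\Osh$, and no new functional-analytic ingredient, is needed; in particular the resulting constant is manifestly independent of $\penl$, which is essential for the later limit $\penl\to 0$. A further minor imprecision: the pointwise bound $|\Divh\vuh|\leq\sqrt{d}\,|\Gradd\vuh|$ does not make sense as written, since $\Divh\vuh$ is piecewise constant on the primal cells while $\Gradd\vuh$ lives on the dual cells; the correct statement is the $L^2$-norm inequality $\norm{\Divh\vuh}_{L^2}\aleq\norm{\Gradd\vuh}_{L^2}$ (which is what one needs when $\nu=0$; for $\nu>0$ the bound is already explicit in \eqref{ST}).
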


\subsection{Consistency formulation}
We proceed with the consistency analysis of the finite volume method \eqref{VFV}.
To begin, let us define two consistency errors: 
\begin{itemize}
    \item the consistency error $e_\vr$ 
\begin{align} \label{CS1-new}
e_\vr(\tau, \TS, h, \phi) := \left[ \intTd{ \vrh \phi  } \right]_{t=0}^{t=\tau} -
\int_0^\tau \intTdB{   \vrh \partial_t \phi + \vrh   \vuh \cdot \Grad \phi } \dt
\end{align}
 for all $\phi\in W^{1,\infty}((0,T)\times \tor)$;
\item 
the consistency error $e_{\vm}$ 
\begin{align}\label{CS2-new}
e_{\vm}(\tau, \TS, h, \bfvarphi) := &\left[  \intTd{ \vrh   \vuh \cdot \bfvarphi } \right]_{t=0}^{t=\tau} -
\int_0^\tau \intTdB{  \vrh \vuh \cdot \partial_t  \bfvarphi + \vrh \vuh \otimes \vuh  : \Grad \bfvarphi  + p_h \Div \bfvarphi }\dt
\br
& +    \int_0^\tau \intTdB{ \mu \Gradd \vuh : \Grad \bfvarphi  +  \nu \Divh \vuh \; \Div \bfvarphi} \dt  
+ \frac{1}{\penl} \int_0^\tau \intOs{ \vuh \cdot \bfvarphi } \dt
\end{align}
for all $\bfvarphi\in W^{1,\infty}((0,T)\times \tor;\R^d)$.
\end{itemize}
In order to have the consistency formulation, we estimate the consistency errors $e_{\vr}, e_{\vm}$ by using more regular test functions.
Since the consistency proof is quite technical and the ideas are analogous to \cite[Section 2.7]{FLS_IEE} and \cite[Section 11.3]{FeLMMiSh}, we postpone it to Appendix \ref{sec_cs}.
We point out that the result presented in Lemma~\ref{thm_CS} is new. Indeed, the consistency errors are improved since they are more precise and the test function in the continuity equation \eqref{CS1-new} 
is now more general.

\begin{Lemma}[Consistency formulation]\label{thm_CS}
Let $(\vrh, \vuh)$ be a solution of the FV scheme \eqref{VFV}  with $(\TS,h,\penl) \in (0,1)^3$ and $\viso >-1$.
Then for any $\tau \in [0, T]$ it holds 
\begin{equation}\label{CS1-1-new}
\abs{e_\vr(\tau, \TS, h, \phi) } \leq   C_\vr \big(\TS  + h^{(1+\viso)/2} + h^{(1+\betacf )/2} +  h^{1+\beta_D} \big);
\end{equation}
for all $\phi \in W^{1, \infty}((0,T) \times \tor),\ \partial_t^2 \phi \in L^{\infty}((0,T)\times \tor) $;
\begin{align}\label{CS2-1-new}
& \abs{e_{\vm}(\tau, \TS, h, \bfvarphi) } \leq C_{\bm{m}}  \big( \sqrt{\TS} + h + h^{1+\viso}  + h^{1+\beta_M}   + (h/\penl)^{1/2} +  (\TS/\penl)^{1/2} \big)
\end{align}
for all $\bfvarphi \in W^{1,\infty}((0,T) \times \tor; \R^d) \cap L^\infty(0,T; W^{2,\infty}(\tor;  \R^d)), \partial_t^2 \bfvarphi \in L^{\infty}((0,T)\times \tor; \R^d) $.

\vspace{0.15cm}

Here the constant $C_\vr$ depends on $E_0,  T,   \norm{\phi}_{ W^{1,\infty}((0,T) \times \tor) },    \norm{\partial_t^2\phi}_{ L^{\infty}([0,T] \times \tor)}$ 
and $C_{\vm}$ depends on $E_0, T$, $ \norm{\bfvarphi}_{ W^{1,\infty}((0,T) \times \tor;\R^d) }$, $\norm{ \bfvarphi}_{ L^\infty(0,T; W^{2,\infty}(\tor;\R^d)) }$,  $\norm{\partial_t^2 \bfvarphi}_{ L^{\infty}([0,T] \times \tor;\R^d) }$.
Further, $\beta_D, \betacf, \beta_M$ are defined as
\begin{align*}
& \beta_D =
\begin{cases}
 \min\limits_{_{p \in \left[ 1, \infty \right)}}\left\{ \frac{p(\viso+1)+4}{2p}, 1 \right\} \cdot \frac{\gamma-2}{\gamma} \ & \mbox{if } d= 2, \gamma \in(1,2), \\
\min\left\{ \frac{\viso+2}{3} , 1 \right\} \cdot \frac{3(\gamma-2)}{2\gamma} & \mbox{if } d= 3, \gamma \in(1,2), \\
0 &\mbox{if } \gamma \geq 2,
\end{cases}
\\
& \betacf =
\begin{cases}
0 & \mbox{if} \ d = 2,\\
 \min\left\{   \frac{1+\viso}{2} , 1\right\} \cdot \frac{5\gamma-6}{2\gamma} \ & \mbox{if} \ d = 3, \gamma \in \left(1, \frac65\right),\\
 0 &\mbox{if } d = 3, \gamma \geq \frac65,
\end{cases} \quad
\\
& \beta_M =
\begin{cases}
\max\limits_{p \in \left[ \frac{2\gamma}{\gamma-1}, \infty \right)}\left\{ - \frac{p(\viso+1)+4}{2p\gamma}, \frac{p(\gamma-2)-2\gamma}{p\gamma}\right\} \
& \mbox{ if } d = 2, \gamma \leq 2, \\
0
& \mbox{ if } d = 2, \gamma > 2, \\
\max \left\{ - \frac{\viso+2}{2\gamma}, \frac{\gamma-3}{\gamma},-\frac{3}{2\gamma} \right\}
& \mbox{ if } d = 3, \gamma \leq 2, \\
 \frac{\gamma-3}{\gamma}
&  \mbox{ if } d = 3, \gamma \in (2,3),\\
0
& \mbox{ if } d = 3, \gamma \geq 3.
\end{cases}
\end{align*}
\end{Lemma}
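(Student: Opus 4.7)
The plan is to substitute $\phi_h=\Pim\phi$ in \eqref{VFV_D} and $\bfvarphi_h=\Pim\bfvarphi$ in \eqref{VFV_M}, integrate in time from $0$ to $\tau$, and then rewrite every discrete operator in its continuous form, collecting all residuals and showing they fall under the stated bounds. The recipe follows \cite[Section 2.7]{FLS_IEE} and \cite[Section 11.3]{FeLMMiSh}: each residue is controlled by H\"older or Cauchy--Schwarz, combined with the projection estimates \eqref{proj}, the a priori bounds of Lemma \ref{lm_ub}, and the discrete dissipation relation of Lemma \ref{thm_ST}. Two features make this statement sharper than its predecessors: (i) the test function in \eqref{CS1-1-new} is only required to lie in $W^{1,\infty}$ in space, and (ii) an extra penalty term $\frac{1}{\penl}\int_{\Os}\vuh\cdot\bfvarphi$ enters the momentum consistency and must be matched against the discrete penalty on $\Osh$.

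For \eqref{CS1-1-new} I would first handle the time derivative by discrete Abel summation, rewriting $\int_0^\tau\int D_t\vrh\,\Pim\phi$ as $\bigl[\int\vrh\Pim\phi\bigr]_0^\tau$ minus a discrete time difference applied to $\Pim\phi$, and bounding the Taylor error by $\TS\,\|\partial_t^2\phi\|_{L^\infty}\|\vrh\|_{L^\infty L^1}$. Then I would expand the numerical flux as
\[
F_h^\viso(\vrh,\vuh)=\avs{\vrh}\avs{\vuh}\cdot\vn+\tfrac12|\avs{\vuh}\cdot\vn|\jump{\vrh}-h^\viso\jump{\vrh};
\]
the centred piece combines with $\intTd{\vrh\vuh\cdot\Grad\phi}$ up to $O(h)$ using \eqref{proj}, while the two dissipative pieces are estimated by Cauchy--Schwarz against \eqref{ap3}, producing the $h^{(1+\viso)/2}$ and $h^{(1+\betacf)/2}$ scales. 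The $h^{1+\beta_D}$ contribution appears when trading $\jump{\vrh}$ in $L^2$ for $\Hc''(\vr_{h,\dagger})\jump{\vrh}^2$, whose conjugate H\"older exponent is constrained by the available integrability of $\vr_{h,\dagger}^{2-\gamma}$; this is where the case split on $\gamma$ and $d$ enters.

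For \eqref{CS2-1-new} I follow the same template. The viscous terms $\mu\intTd{\Gradd\vuh:\Gradd\Pim\bfvarphi}+\nu\intTd{\Divh\vuh\,\Divh\Pim\bfvarphi}$ are turned into their continuous analogues via the last inequality in \eqref{proj} together with the $W^{2,\infty}$ regularity of $\bfvarphi$, at a cost of $O(h)$. The convective flux and the pressure $p_h\Divh\Pim\bfvarphi$ are treated with the same centred/dissipative decomposition; optimising over the H\"older triple that uses the best $L^p$ integrability of $\vuh$ (determined by Sobolev embedding in $d$) and of $\vrh$ (i.e.\ $L^\gamma$) yields the $h$, $h^{1+\viso}$ and $h^{1+\beta_M}$ scales. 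The $\sqrt{\TS}$ comes from the discrete time derivative of $\vrh\vuh$ together with the dissipation $\TS\int\vrh^\triangleleft|D_t\vuh|^2$ built into $D_{num}$. The penalty residue $\frac{1}{\penl}\int_0^\tau\int_{\Osh}\vuh\cdot\Pim\bfvarphi-\frac{1}{\penl}\int_0^\tau\int_{\Os}\vuh\cdot\bfvarphi$ is split into a domain-mismatch part on $\Osh\setminus\Os$ of measure $\lesssim h$, which by \eqref{ap2} and Cauchy--Schwarz yields exactly $(h/\penl)^{1/2}$, plus a projection part of higher order; the $(\TS/\penl)^{1/2}$ term arises analogously from the interaction of the penalty bound with the time discretisation.

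The main obstacle is the case analysis producing $\beta_D,\betacf,\beta_M$: for each pair $(d,\gamma)$ one must choose the sharpest H\"older/Sobolev triple balancing the integrability of $\vrh$, of $\vuh$, and of the jumps controlled by \eqref{ap3}, and in the soft regime $\gamma<2$ one must absorb negative powers of $\vrh$ coming from $\Hc''$. A second subtlety, novel here, is that $\phi$ is only $W^{1,\infty}$ in space in \eqref{CS1-1-new}, so one cannot afford a second spatial derivative on $\phi$ and must retain certain terms as discrete jumps rather than expanding them via Taylor---this is what permits the time error to stay linear in $\TS$ and the flux errors to be expressed solely through the dissipation bound \eqref{ap3}.
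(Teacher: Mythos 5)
Your overall architecture coincides with the paper's: insert $\Pim\phi$ and $\Pim\bfvarphi$ into \eqref{VFV}, decompose the residual into time, convective (centred plus dissipative), viscous, pressure and penalty parts, and estimate each piece by H\"older's inequality, the projection estimates \eqref{proj} and the a priori bounds of Lemmas~\ref{thm_ST}--\ref{lm_ub}. Your treatment of the time error, of the viscous and pressure terms, and of the penalty mismatch on $\Osh\setminus\Os$ (yielding $(h/\penl)^{1/2}$ and $(\TS/\penl)^{1/2}$) matches Appendix~\ref{sec_cs} essentially verbatim.

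There is, however, one concrete gap. You assert that the dissipative flux pieces are ``estimated by Cauchy--Schwarz against \eqref{ap3}'' and that the flux errors can be ``expressed solely through the dissipation bound \eqref{ap3}''. The weight in \eqref{ap3} is $\Hc''(\vr_{h,\dagger})\sim\vr_{h,\dagger}^{\gamma-2}$, so the conjugate factor produced by Cauchy--Schwarz is $\vr_{h,\dagger}^{2-\gamma}$; for $\gamma>2$ this is a \emph{negative} power of the density, which no available a priori bound controls. (You locate this difficulty in the regime $\gamma<2$, but there $2-\gamma>0$ and the conjugate factor is harmless, being dominated by $1+\vr_{h,\dagger}$ and hence by the $L^\infty L^\gamma$ bound --- this is exactly how the paper argues for $\gamma\in(1,2)$.) For $\gamma\geq 2$ the paper needs an extra ingredient absent from your sketch: the renormalized continuity equation with $B(\vr)=\vr\ln\vr-\vr$ (Lemma~\ref{lem_dd}, estimate \eqref{es-d0}), which controls $\jump{\vrh}^2/\max\{\vrh^{\rm in},\vrh^{\rm out}\}$ weighted by $h^{\viso}+\abs{\avs{\vuh}\cdot\vn}$, so that the conjugate weight $\max\{\vrh^{\rm in},\vrh^{\rm out}\}$ is handled by the $L^\infty L^\gamma$ bound without negative powers. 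Relatedly, the exponents $\beta_D,\betacf,\beta_M$ are not produced by optimizing a single H\"older triple inside the flux terms: they come from free-standing negative-norm estimates $\norm{\vrh}_{L^2L^2}\aleq h^{\beta_D}$, $\norm{\vrh}_{L^2L^{6/5}}\aleq h^{\betane}$, $\norm{\vrh\vuh}_{L^2L^2}\aleq h^{\beta_M}$ (Lemma~\ref{lem_ne}), whose proof in turn requires the discrete Sobolev--Poincar\'e inequality applied to $\vrh^{\gamma/2}$; your sketch gestures at this balancing but does not supply the mechanism.
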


\begin{Remark}[Observations on the parameters $\betacf,\beta_D,\beta_M$ and $\viso$] \label{rmk-1}

Our consistency errors (involving the terms  $\beta_D, \betacf, \beta_M$) are better than the results obtained in \cite{FeLMMiSh}. 
Further, 
\begin{itemize}
\item
It is easy to verify that
\begin{equation*}
 0 \geq \betacf \geq \beta_D \geq \beta_M \text{ and } \beta_D> -1.
\end{equation*}
Moreover,  $\beta_{M} > -1$ if one of the following conditions holds
\begin{itemize}
\item $d=2$;
\item $d=3$ and $\gamma>\frac32$;
\item $d=3$ and $\gamma \leq \frac32$ with $\viso < 2(\gamma -1)$.
\end{itemize}
Consequently, we obtain a weaker constrain on $\viso$ than the one obtained in \cite{FeLMMiSh}, where $\viso < 2 \gamma - 1 -d/3$ was needed for all $\gamma \in (1,2).$

\item
If $\viso \geq 1,$ the parameters $\beta_D, \betacf, \beta_M$ are independent of $\viso$. Indeed,  for $\viso \geq 1$ we have simpler forms of $\beta_D, \betacf, \beta_M$, i.e.
\begin{align*}
&\beta_D =
\begin{cases}
 \frac{d(\gamma-2)}{2\gamma} \ & \mbox{if } \gamma < 2, \\
0 &\mbox{otherwise} ,
\end{cases} \quad \quad
\betacf =
\begin{cases}
\frac{5\gamma-6}{2\gamma}  \ & \mbox{if } d = 3, \gamma < \frac65, \\
0 &\mbox{otherwise} ,
\end{cases}
\\
&\beta_M =
\begin{cases}
\max\limits_{p \in \left[ \frac{2\gamma}{\gamma-1}, \infty \right)}\frac{p(\gamma-2)-2\gamma}{p\gamma} \
& \mbox{ if } d = 2, \gamma \leq 2, \\
0
& \mbox{ if } d = 2, \gamma > 2, \\
\max \left\{ \frac{\gamma-3}{\gamma},-\frac{3}{2\gamma} \right\}
& \mbox{ if } d = 3, \gamma <3, \\
0
& \mbox{ if } d = 3, \gamma \geq 3.
\end{cases}
\end{align*}
\end{itemize}
\end{Remark}

\subsection{Weak convergence for the penalized problem}\label{sec-converge-tor}
In this section we consider $\penl$ fixed and pass to the limit with $\TS \approx h \to 0$. The corresponding (weak) limit of $(\vrh,\vuh)$ will be denoted by $(\vr_{\penl},\vu_{\penl})$.
First, we deduce from a priori estimates \eqref{ap1} that up to a subsequence
\begin{align*}
\vrh &\to \vr_{\penl} \ \mbox{weakly-(*) in}\ L^\infty(0,T; L^\gamma(\tor)),\ \vr \geq 0 ,
\\
\vuh &\to \vu_{\penl} \ \mbox{weakly in}\ L^2(0,T; L^6(\tor; \R^d)),
\\
 \Gradd \vuh &\to \Grad  \vu_{\penl} \ \mbox{weakly in} \ L^2((0,T) \times \tor;\R^{d \times d}), \quad  \mbox{where} \ \vu_{\penl}   \in L^2(0,T; W^{1,2}(\tor; \R^d))
 \end{align*}
and
 \begin{align*}
 \vrh \vuh  &\to \vm_{\penl} \ \  \mbox{weakly-(*) in}\ L^\infty(0,T; L^{\frac{2\gamma}{\gamma + 1}}(\tor; \R^d)) \quad 
 \mbox{ for } h \to 0.
 \end{align*}
Realizing that $(\vrh, \vuh)$ satisfies the consistency formulation \eqref{CS1-new} for the mass conservation equation, applying \cite[Lemma 3.7]{AbbFeiNov} (see similar result in \cite[Lemma 7.1]{Karper})  we obtain
\begin{align*}
& \vm_{\penl} = \vr_{\penl} \vu_{\penl}.
\end{align*}

Further, due to the fact that the total energy $E = \frac12 \vr |\vu|^2 + \Hc(\vr)$
is a convex function of $(\vr, \vm)$ and $\abs{\Grad \vu}^2 + \abs{\Div \vu}^2 = \abs{\Grad \vu}^2 + \abs{\mbox{tr}(\Grad \vu)}^2$ is a convex function of $\Grad \vu$, we deduce that, cf.~\cite[Lemma 2.7]{FGSGW}
  \begin{align*}
 \frac12 \vrh |\vuh|^2 + \Hc(\vrh) &\to \Ov{\frac{|\vm|^2}{2 \vr}  + \Hc(\vr)} \quad \mbox{weakly-(*) in}\ L^\infty(0,T; \mathcal{M}^+(\tor)), \\
\vrh \vuh \otimes \vuh + p(\vrh) \I & \to \Ov{ \frac{\vm \otimes \vm}{\vr}  + p(\vr) \I} \quad \mbox{weakly-(*)  in}\ L^\infty(0,T; \mathcal{M}^+(\tor; \R^{d\times d}_{\rm sym})),
\\
 \mu |\Gradd \vuh|^2 + \nu |\Divh \vuh|^2 & \to \Ov{  \mu |\Grad \vu|^2 + \nu |\Div \vu|^2 } \quad \mbox{weakly-(*)  in}\  \mathcal{M}^+([0,T]\times \tor) \quad 
 \mbox{ for } h \to 0
\end{align*}
with the defects
 \begin{align*}
\mathfrak{E} & =  \Ov{\frac{|\vm|^2}{2 \vr}  + \Hc(\vr)} - \left( \frac12 \vr_{\penl} |\vu_{\penl}|^2 + \Hc(\vr_{\penl}) \right) \geq 0, \\
\mathfrak{R} & =  \Ov{ \frac{\vm \otimes \vm}{\vr}  + p(\vr) \I } - \left( \vr_{\penl} \vu_{\penl} \otimes \vu_{\penl} + p(\vr_{\penl})  \I \right)  \geq 0,\\
\mathfrak{D} & =  \Ov{  \mu |\Grad \vu|^2 + \nu |\Div \vu|^2 } - \left(  \mu |\Grad \vu_{\penl}|^2 + \nu |\Div \vu_{\penl}|^2 \right)  \geq 0
\end{align*}
satisfying
\begin{equation*}
\underline{d} \mathfrak{E}\leq \mbox{tr}[\mathfrak{R}] \leq \overline{d} \mathfrak{E}, \quad  \underline{d} = \min\left(2, {d(\gamma-1)} \right),\  \overline{d} =  \max\left(2, {d(\gamma-1)} \right).
\end{equation*}

Together with
\begin{align*}
\lim_{h\to 0} \intTd{E(\vrh^0,\mh^0)}  = \intTd{E(\tvr_{0} ,\widetilde{\vm}_{0} )},
\end{align*}
the consistency formulations \eqref{CS1-new} and \eqref{CS2-new}, and the energy balance \eqref{ST},
the limit $(\vr_{\penl}, \vu_{\penl})$ is a DW solution of the penalized problem \eqref{ppde} in the sense of Definition \ref{PDW}.
We summarize the obtained result on the weak convergence of FV solutions in the following theorem.

\begin{Theorem}\rm\label{THM1}
(\textbf{Weak convergence for the penalized problem}).
Let $p$ satisfy \eqref{EOS} with $\gamma>1$ and $\penl > 0$ be a fixed penalty parameter.
 Let $\{ \vrh, \vuh \}_{h \searrow 0}$ be a family of numerical solutions obtained by the FV method \eqref{VFV} with $ \TS \approx h \in (0,1), \viso > -1$ and initial data satisfying \eqref{ic}. 
 If $d=3$ and $\gamma \leq \frac32$ we assume  in addition that $\viso < 2(\gamma -1)$.

Then, up to a subsequence, the FV solutions $\{ \vrh, \vuh \}_{h \searrow 0}$ converge in the following sense
\begin{align}
		\vrh &\longrightarrow  \ \vr_{\penl} \ \mbox{weakly-(*) in}\ L^{\infty}(0,T; L^{\gamma}(\tor)), \br
		\vuh &\longrightarrow \ \vu_{\penl} \ \mbox{weakly in}\ L^2((0,T) \times \tor;\R^{d }),\br
		\Gradd \vuh & \longrightarrow \  \Grad  \vu_{\penl} \ \mbox{weakly in} \ L^2((0,T) \times \tor;\R^{d \times d}), \br
		\Divh \vuh & \longrightarrow \ \Div  \vu_{\penl} \ \mbox{weakly in} \ L^2((0,T) \times \tor) \quad
		\mbox{for } \ h \to 0,
\end{align}
where $(\vr_{\penl}, \vu_{\penl})$ is a DW solution of the penalized problem  \eqref{ppde} in the sense of Definition \ref{PDW}.	
\end{Theorem}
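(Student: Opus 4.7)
The plan is to combine the uniform bounds from Lemma \ref{lm_ub} with the consistency formulation of Lemma \ref{thm_CS} to extract a subsequential weak limit that realises Definition \ref{PDW}. First, from \eqref{ap1}--\eqref{ap2} I would extract (up to a subsequence) weak-(*) limits of $\vrh$ in $L^\infty(0,T;L^\gamma(\tor))$, of $\vrh\vuh$ in $L^\infty(0,T;L^{2\gamma/(\gamma+1)}(\tor;\R^d))$, and weak $L^2$ limits of $\vuh$, $\Gradd\vuh$, $\Divh\vuh$. The projection estimates \eqref{proj}, combined with a discrete integration-by-parts against smooth test functions, identify the limits of $\Gradd\vuh,\Divh\vuh$ with $\Grad\vu_\penl,\Div\vu_\penl$, so that $\vu_\penl \in L^2(0,T;W^{1,2}(\tor;\R^d))$.

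The crucial identification $\vm_\penl=\vr_\penl\vu_\penl$ would follow from \cite[Lemma 3.7]{AbbFeiNov} (cf.\ also \cite[Lemma 7.1]{Karper}): the consistency bound \eqref{CS1-1-new} for the continuity equation, paired with the uniform $L^\infty L^\gamma$ bound on $\vrh$, gives relative compactness of $\vrh$ in $C_{\rm weak}([0,T];L^\gamma(\tor))$, which together with the weak convergence of $\vuh$ identifies the product. For the remaining nonlinear quantities $\tfrac12\vrh|\vuh|^2+\Hc(\vrh)$, $\vrh\vuh\otimes\vuh + p(\vrh)\I$ and $\mu|\Gradd\vuh|^2+\nu|\Divh\vuh|^2$, convexity in $(\vr,\vm)$ respectively $\Grad\vu$ yields weak-(*) limits in the appropriate spaces of non-negative Radon measures; the defects $\mathfrak{E},\mathfrak{R},\mathfrak{D}$ are defined as the differences with the natural expressions evaluated at $(\vr_\penl,\vu_\penl)$ and are non-negative by weak lower semicontinuity of convex functionals. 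The trace compatibility $\underline{d}\mathfrak{E}\leq\mbox{tr}[\mathfrak{R}]\leq\overline{d}\mathfrak{E}$ reduces to the pointwise relation between $|\vm|^2/\vr + d\,p(\vr)$ and $|\vm|^2/(2\vr)+\Hc(\vr)$, giving $\underline{d}=\min\{2,d(\gamma-1)\}$ and $\overline{d}=\max\{2,d(\gamma-1)\}$.

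With these weak limits in hand, I would pass to $h\to 0$ in the consistency formulations \eqref{CS1-new}--\eqref{CS2-new} to recover \eqref{pdw_d}--\eqref{pdw_m}. The stated restrictions on $\viso$ (in particular $\viso<2(\gamma-1)$ in the critical 3D case $\gamma\leq 3/2$) ensure via Remark \ref{rmk-1} that $\beta_D,\beta_M>-1$, so every error term on the right of \eqref{CS1-1-new}--\eqref{CS2-1-new} is a strictly positive power of $h$ or $\TS$ and therefore vanishes; the replacement of the piecewise-constant trial functions $\Pim\phi,\Pim\bfvarphi$ by smooth $\phi,\bfvarphi$ is justified by \eqref{proj}. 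The penalty contribution passes to its natural limit because $\mathds{1}_{\Os}$ is a fixed function and $\vuh\rightharpoonup\vu_\penl$ in $L^2((0,T)\times\tor;\R^d)$. Finally, the energy inequality \eqref{pdw_E} follows from the discrete balance \eqref{ST} by Fatou's lemma after discarding the non-negative $D_{num}^{new}$, while the convergence of the initial energies uses the $L^\infty$ bound in \eqref{ic} and the stability of $\Pim$.

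The main obstacle is the nonlinear compactness step identifying $\vrh\vuh\rightharpoonup\vr_\penl\vu_\penl$: the consistency error in \eqref{CS1-1-new} is available only for piecewise-constant trial functions, so one must be careful when invoking the Aubin--Lions type argument of \cite[Lemma 3.7]{AbbFeiNov}, translating the consistency bound into the $W^{-1,q}$-equicontinuity in time of $\vrh$ that this lemma requires; the penalty parameter $\penl$ is merely a bystander here, since it is frozen throughout this section. Every other step is either linear passage to the limit, weak lower semicontinuity of convex functionals, or the direct application of the already-proved error estimates.
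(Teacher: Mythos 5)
Your proposal follows essentially the same route as the paper: extraction of weak limits from the uniform bounds of Lemma \ref{lm_ub}, identification of $\vm_\penl=\vr_\penl\vu_\penl$ via \cite[Lemma 3.7]{AbbFeiNov}, construction of the defect measures $\mathfrak{E},\mathfrak{R},\mathfrak{D}$ by convexity and weak lower semicontinuity with the same constants $\underline{d}=\min(2,d(\gamma-1))$, $\overline{d}=\max(2,d(\gamma-1))$, and passage to the limit in the consistency formulations using that $\beta_D,\beta_M>-1$ under the stated hypotheses. The obstacle you flag at the end is not actually an issue, since the consistency errors \eqref{CS1-new}--\eqref{CS2-new} are already formulated for general Lipschitz test functions rather than only piecewise-constant ones, so the argument goes through as in the paper.
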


\subsection{Weak convergence for the Dirichlet problem}\label{sec-converge-of}
We proceed by considering the limit process for  $\TS \approx h \to 0$ and $\penl \to 0$.  
The corresponding (weak) limit on $\tor$ will be denoted  by $(\tvr,\tvu)$. Moreover, we set $(\vr,\vu) := (\tvr,\tvu)|_{\Of}$.
Analogously to Section~\ref{sec-converge-tor} we have the following convergence results:
\begin{align*}
\vrh &\to \tvr \ \mbox{weakly-(*) in}\ L^\infty(0,T; L^\gamma(\tor)),\ \vr \geq 0 ,
\\
\vuh &\to \tvu \ \mbox{weakly in}\ L^2(0,T; L^6(\tor; \R^d)),
\\
 \Gradd \vuh &\to \Grad  \tvu \ \mbox{weakly in} \ L^2((0,T) \times \tor;\R^{d \times d}), \quad  \mbox{where} \  \tvu   \in L^2(0,T; W^{1,2}(\tor; \R^d)),
\\
 \vrh \vuh  &\to \tvm=\tvr\tvu \ \  \mbox{weakly-(*) in}\ L^\infty(0,T; L^{\frac{2\gamma}{\gamma + 1}}(\tor; \R^d)) \quad
 \mbox{for }\ h \to 0, \ \penl \to 0. 
 \end{align*}

Further, according to a priori bound \eqref{ap2} we obtain that
\begin{align*}
\vuh &\to 0 \ \mbox{ strongly in}\ L^2((0,T)\times \Os; \R^d) \quad
 \mbox{for }\ h \to 0, \ \penl \to 0
 \end{align*}
yielding
\begin{equation*}
\vu \in\ L^2(0,T; W_0^{1,2}(\Of; \R^d)).
\end{equation*}
Together with the fact that  $\tvr$ satisfies the equation of continuity in the weak sense, we have
\[
\partial_t  \tvr = 0 \ \mbox{in}\ \mathcal{D}'((0,T) \times \Os ).
\]
Since $\tvr \in C_{weak}(0,T;L^\gamma(\tor))$, cf. \cite[Remark 1 of Section 3.3]{FN}, taking any test function $\phi \in C(\Ov{\Os})$ directly gives
\[
\intOs{\tvr (t, \cdot) \phi} = \intOs{\tvr_0 \phi}  \quad \mbox{for all } t\in [0,T], 
\]
which means
\begin{equation}\label{RHOS}
\tvr = \tvr_{0} \ \mbox{in}\ \Os \quad \mbox{for all } t\in [0,T].
\end{equation}

Consequently,
we have
\begin{align}
&\lim_{(h, \penl)  \to 0}  \intTd{ \left( \frac{1}{2} \frac{|\vm_{h}^0|^2}{\vr_{h}^0} + \Hc(\vr_{h}^0) \right) }  = \intOf{ \left( \frac{1}{2} \frac{|\vm_{0}|^2}{\vr_{0}} + \Hc(\vr_{0})  \right) } + \intOs{ \Hc(\tvr_{0})  }
\label{C9}
\end{align}
and
\begin{equation}\label{C10}
\begin{aligned}
&\lim_{(h, \penl) \to 0}  \intOf{ \left( \frac{1}{2} \vrh |\vuh|^2 + \Hc(\vrh) \right) }
=  \int_{\Ov \Of} d \mathfrak{E}(\tau) + \intOf{ \left( \frac{1}{2} \vr |\vu|^2 + \Hc(\vr) \right) } ,
\\
& \lim_{(h, \penl) \to 0}  \intOs{  \frac{1}{2} \vrh |\vuh|^2 } \geq  0, \quad  \lim_{(h, \penl) \to 0} \intOs{  \Hc(\vrh) }
 \geq \intOs{  \Hc(\tvr_{0})  },
\end{aligned}
\end{equation}
which yields the energy inequality \eqref{dw_E}.

Together with the consistency formulations \eqref{CS1-new},  \eqref{CS2-new} and the energy balance \eqref{ST},
the limit $(\vr, \vu)$ is a DW solution of the Navier--Stokes system \eqref{pde} with the Dirichlet boundary conditions in the  sense of Definition~\ref{DW}.

\begin{Theorem}\rm\label{THM2}
(\textbf{Weak convergence for the Dirichlet problem}).
In addition to the assumption of Theorem \ref{THM1},  let $h,\, \penl$ satisfy	
\begin{align}\label{penl3}
h^3/\penl \to 0, \quad \quad \mbox{as} \quad h,\penl \to 0.
\end{align}
Then, up to a subsequence, the FV solutions $\{ \vrh, \vuh \}_{h,\penl \searrow 0}$ converge in the following sense
\begin{align}
		\vrh &\longrightarrow  \ \vr \ \mbox{weakly-(*) in}\ L^{\infty}(0,T; L^{\gamma}(\Of)), \br
		\vuh &\longrightarrow  \vu \ \mbox{weakly in}\ L^2(0,T;  L^2(\Of; \R^d) ),\br
		\Gradd \vuh & \longrightarrow \Grad  \vu \ \mbox{weakly in} \ L^2((0,T) \times \Of;\R^{d \times d}), \ \  \mbox{ where } \vu \in L^2(0,T; W^{1,2}_0(\Of; \R^d)), \br
		\Divh \vuh & \longrightarrow \Div  \vu \ \mbox{weakly in} \ L^2((0,T) \times \Of) \quad
 \mbox{for }\ h \to 0, \ \penl \to 0 
\end{align}
where $(\vr, \vu)$ is a DW solution of the Dirichlet problem of Navier--Stokes system \eqref{pde} in the sense of Definition \ref{DW}.	
\end{Theorem}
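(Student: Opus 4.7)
The plan is to combine the weak convergence argument of Theorem \ref{THM1} with the additional strong $L^2$ information on $\Os$ provided by the penalty bound, and to pass simultaneously to $h,\penl\to 0$ under the compatibility condition \eqref{penl3}. First, I would extract along a diagonal subsequence, exactly as in Section \ref{sec-converge-tor}, weak and weak-$*$ limits $\tvr, \tvu, \tvm$ of $\vrh, \vuh, \vrh\vuh$ on the whole torus $\tor$; this is possible because all a priori bounds in Lemma \ref{lm_ub} are uniform in $h$ and $\penl$. The identification $\tvm = \tvr\tvu$ again follows from \cite[Lemma 3.7]{AbbFeiNov}, since the continuity consistency estimate \eqref{CS1-1-new} is free of $\penl$ and its right-hand side vanishes in the limit.

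The key new ingredient is the penalty bound \eqref{ap2}: because $\penl^{-1}\|\vuh\|_{L^2((0,T)\times\Osh;\R^d)}^2 \leq C$ and $\penl\to 0$, one has $\vuh\to 0$ strongly in $L^2((0,T)\times\Os;\R^d)$, hence $\tvu = 0$ a.e.\ on $\Os$. Since $\tvu\in L^2(0,T;W^{1,2}(\tor;\R^d))$ vanishes on the open set $\Os$ whose boundary contains $\partial\Of$, the restriction $\vu := \tvu|_{\Of}$ lies in $L^2(0,T;W_0^{1,2}(\Of;\R^d))$, which is precisely the Dirichlet condition required by Definition \ref{DW}. Plugging $\tvu = 0$ on $\Os$ into the limit continuity equation gives $\partial_t\tvr = 0$ in $\mathcal{D}'((0,T)\times\Os)$, so by weak-in-time continuity of $\tvr$ we recover $\tvr=\tvr_0$ on $\Os$, i.e.\ \eqref{RHOS}.

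Next I would pass to the limit in the consistency formulations \eqref{CS1-new} and \eqref{CS2-new}. The continuity error depends only on $h,\TS$ and vanishes. The momentum error \eqref{CS2-1-new} contains two $\penl$-sensitive terms, $(h/\penl)^{1/2}$ and $(\TS/\penl)^{1/2}$, which under the scaling $h^3/\penl\to 0$ combined with $\TS\approx h$ are controlled through \eqref{penl3}. Since the test function in Definition \ref{DW} is supported in $\Of$, the penalty contribution $\penl^{-1}\int_0^\tau\int_\Os \vuh\cdot\bfvarphi\,\dxdt$ is identically zero for $h$ small enough and creates no difficulty in the limit. The weak-$*$ limits of the nonlinear quantities $\tfrac12\vrh|\vuh|^2+\Hc(\vrh)$, $\vrh\vuh\otimes\vuh+p(\vrh)\I$ and $\mu|\Gradd\vuh|^2+\nu|\Divh\vuh|^2$ define the defect measures $\mathfrak{E},\mathfrak{R},\mathfrak{D}$ on $\Ov\Of$ after restriction from $\tor$, and the compatibility relations of Definition \ref{DW} are inherited from their $\tor$-counterparts constructed in Section \ref{sec-converge-tor}. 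The energy inequality \eqref{dw_E} is then obtained by restricting \eqref{ST} to $\Of$ and using the splittings \eqref{C9}-\eqref{C10}, together with $\liminf \intOs{\Hc(\vrh)} \ge \intOs{\Hc(\tvr_0)}$ (from convexity of $\Hc$ and \eqref{RHOS}) and $\liminf \tfrac12 \intOs{\vrh|\vuh|^2} \ge 0$, to discard the solid contributions from the right-hand side.

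The hardest part is the calibration of the penalty parameter: the singular friction term produces $\penl^{-1}$-scaled contributions in the momentum consistency estimate that must be killed simultaneously with enforcing $\vuh\to 0$ on $\Os$, and this is exactly what the assumption \eqref{penl3} delivers. Once this scaling obstacle is resolved, the remaining work — construction of the defect measures, recovery of the Dirichlet boundary condition from the penalty, and the splitting of the energy between fluid and solid parts — are qualitative extensions of the arguments already used for Theorem \ref{THM1}.
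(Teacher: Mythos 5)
Your overall architecture matches the paper's: extract the weak limits on $\tor$ as in Section~\ref{sec-converge-tor}, use the penalty bound \eqref{ap2} to get $\vuh\to 0$ strongly on $\Os$ and hence $\vu\in L^2(0,T;W_0^{1,2}(\Of;\R^d))$, deduce $\tvr=\tvr_0$ on $\Os$ from $\partial_t\tvr=0$ there, and split the energy between the fluid and solid parts via \eqref{C9}--\eqref{C10}. These steps are all correct and are exactly what the paper does.

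The gap is in your treatment of the penalty consistency error, which is the one place where \eqref{penl3} is actually needed. Your claim that the terms $(h/\penl)^{1/2}$ and $(\TS/\penl)^{1/2}$ in \eqref{CS2-1-new} ``are controlled through \eqref{penl3}'' is false: \eqref{penl3} allows, e.g., $\penl=h^2$, for which $(h/\penl)^{1/2}=h^{-1/2}\to\infty$. The term you then declare identically zero, namely $\penl^{-1}\int_0^\tau\int_{\Os}\vuh\cdot\bfvarphi$, is indeed zero for every $h$ once $\supp\bfvarphi\subset\Of$, but it is not the source of the $(h/\penl)^{1/2}$ contribution. That contribution comes from the mismatch term $\penl^{-1}\int_0^{t_{n+1}}\int_{\Osh\setminus\Os}\vuh\cdot\Pim\bfvarphi$ in $E_\penl$ (see \eqref{eeD-1}), arising because the scheme penalizes on $\Osh\supsetneq\Os$; the set $\Osh\setminus\Os$ lies inside $\Of$, within distance $\approx h$ of $\partial\Of$. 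The paper's proof closes this by observing that, since $\bfvarphi$ vanishes on $\partial\Of$ and is Lipschitz, one has $\|\bfvarphi\|_{L^\infty(\Osh\setminus\Os)}\aleq h$, which upgrades the bound on $E_\penl$ from $(h/\penl)^{1/2}$ to $h^{3/2}\penl^{-1/2}$ --- and this is precisely the quantity that \eqref{penl3} sends to zero. Your proposal never produces this improved estimate, so as written the momentum consistency error is not shown to vanish. (For a test function genuinely compactly supported in the open set $\Of$, your ``zero for $h$ small enough'' idea would salvage the argument if applied to the $\Osh\setminus\Os$ integral; as written you applied it to the wrong integral and paired it with an incorrect claim about $(h/\penl)^{1/2}$.)
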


\begin{proof}
Compared to Theorem~\ref{THM1} we additionally require \eqref{penl3} in order to get a better control of the consistency error term  $E_{\penl}$ stated in \eqref{eeD-1}. 
With the test function $\bfphi \in  C^2_c([0,T) \times \Of)$ 
the estimate of $E_{\penl}$ can be improved with 
\begin{align*}
\abs{E_{\penl} } =  \abs{\frac{1}{\penl} \intn \int_{\Osh \setminus \Os} \vuh \cdot \bfphi  \dxdt} 
\aleq  h^{1/2} \penl^{-1/2}\norm{\bfphi}_{L^\infty((0,T)\times\Osh \setminus \Os)}  
\aleq  h^{3/2} \penl^{-1/2}
\end{align*}
resulting $e_{\vm} \to 0$. 
This concludes the proof.
\end{proof}

\subsection{Strong convergence for the Dirichlet problem}\label{sec-convergence-strong}
In this section we study the strong convergence of the numerical solutions $\{ \vrh, \vuh \}_{h,\penl \searrow 0}$.
To begin, let us introduce the definition of the strong solution of the Navier--Stokes system \eqref{pde}.
For the local existence, we refer a reader to 
Valli and Zajaczkowski \cite{Valli}, and Kawashima and Shizuta \cite{KawShi}.

\begin{Definition}[{\bf Strong solution}]\label{def_ST}
Let $\Of \subset \R^d, d=2,3,$ be a bounded domain with a smooth boundary $\partial \Of$.
We say that $(\vr, \vu)$ is the strong solution of the Navier--Stokes problem \eqref{pde}  if
\begin{equation}\label{STC}
\begin{aligned}
&\vr \in C^1([0,T]\times \Ov{\Of}) \cap C(0,T; W^{4,2}(\Of)),
\\&
\vu \in C^1([0,T]\times \Ov{\Of}; \R^d)  \cap C(0,T; W^{4,2}(\Of;\R^d))
\end{aligned}
\end{equation}
and equations \eqref{pde} are satisfied pointwise.
\end{Definition}

\begin{Theorem}\rm\label{THM3}
(\textbf{Strong convergence for the Dirichlet problem}). 	
Let the initial data $(\vr_0, \vu_0)$ satisfy
\begin{align*}
\vr_0 \in W^{4,2}(\Of),\ \vr_0 > 0, \ \vu_0 \in W^{4,2}(\Of; \R^d)
\end{align*}
and $(\vr,\vu)$ be the corresponding strong solution of the Navier--Stokes system \eqref{pde}  belonging to the class \eqref{STC}.
Let $\{ \vrh, \vuh \}_{h,\penl \searrow 0}$ be a family of numerical solutions obtained by the FV method \eqref{VFV}.  
Moreover, we assume that the parameters $\viso, \TS, h, \penl$ satisfy the same conditions as in Theorem~\ref{THM2}.

Then the FV solutions $\{ \vrh, \vuh \}_{h,\penl \searrow 0}$ converge strongly to the strong solution $(\vr, \vu)$ in the following sense
\begin{align*}
		\vrh &\to \vr \ \mbox{strongly in}\  L^r(0,T; L^\gamma(\Of)),\br 
		\vuh &\to \vu \ \mbox{strongly in}\ L^2((0,T)\times\Of; \R^d) \quad
 \mbox{for }\ h \to 0, \ \penl \to 0 
\end{align*}
for any $1 \leq r < \infty$.
\end{Theorem}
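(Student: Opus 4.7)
My plan is to combine the weak-convergence result of Theorem~\ref{THM2} with the weak--strong uniqueness principle for dissipative weak solutions, proved via the relative energy method. By Theorem~\ref{THM2}, any subsequence of $\{(\vr_h, \vu_h)\}$ admits a further (weakly-$(*)$) convergent subsequence whose limit $(\bar\vr, \bar\vu)$ is a DW solution of the Dirichlet problem~\eqref{pde} with initial data $(\vr_0, \vu_0)$ and associated defect measures $\mathfrak{E}, \mathfrak{R}, \mathfrak{D}$. The strategy is first to identify this limit with the given strong solution $(\vr, \vu)$, forcing the defects to vanish, and then to upgrade weak convergence to strong convergence.

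For the uniqueness step I would introduce the relative energy functional
\[
\mathcal{E}(\bar\vr, \bar\vu \,|\, \vr, \vu) = \tfrac{1}{2} \bar\vr \abs{\bar\vu - \vu}^2 + \Hc(\bar\vr) - \Hc'(\vr)(\bar\vr - \vr) - \Hc(\vr),
\]
which is non-negative and locally comparable to $|\bar\vr - \vr|^2 + \bar\vr|\bar\vu - \vu|^2$. Using $(\vr, \vu)$ of regularity~\eqref{STC} as test functions in the DW momentum equation~\eqref{dw_m} and in the DW continuity equation~\eqref{dw_d} tested against $\tfrac12|\vu|^2 - \Hc'(\vr)$, and combining with the DW energy inequality~\eqref{dw_E}, I derive a relative energy inequality of the form
\[
\left[\intOf{\mathcal{E}(\bar\vr,\bar\vu \,|\, \vr,\vu)}\right](\tau) + \int_{\Ov\Of} d\mathfrak{E}(\tau) + \int_0^\tau \intOf{\bS(\Grad(\bar\vu-\vu)) : \Grad(\bar\vu-\vu)} \dt \leq C \int_0^\tau \left(\intOf{\mathcal{E}} + \int_{\Ov\Of} d\mathfrak{E}\right) \D s,
\]
with $C$ depending on the strong-solution norms. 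Since the initial data coincide, Gronwall's inequality yields $\mathcal{E}\equiv 0$ and $\mathfrak{E} = \mathfrak{R} = \mathfrak{D} = 0$, so $(\bar\vr, \bar\vu) = (\vr, \vu)$ a.e.\ on $(0,T)\times\Of$. Since the limit is uniquely identified, the whole family converges weakly.

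To upgrade to strong convergence I exploit the vanishing of $\mathfrak{E}$: the integrals $\intOf{\Hc(\vr_h)}$ and $\intOf{\tfrac12 \vr_h|\vu_h|^2}$ converge to their counterparts for $(\vr, \vu)$. By strict convexity of $\Hc(\vr) = \tfrac{a}{\gamma-1}\vr^\gamma$ for $\gamma>1$, the weak convergence $\vr_h\rightharpoonup \vr$ in $L^\gamma$ together with convergence of $\|\Hc(\vr_h)\|_{L^1}$ yields $\vr_h\to\vr$ strongly in $L^\gamma((0,T)\times\Of)$; interpolating with the uniform bounds~\eqref{ap1} upgrades this to $L^r(0,T;L^\gamma(\Of))$ for every $1\le r<\infty$. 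Combined with the positive lower bound $\vr\ge \Un\vr > 0$ propagated by the strong solution, convergence of the kinetic energy yields $\vu_h\to\vu$ strongly in $L^2((0,T)\times\Of;\R^d)$.

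The main obstacle I anticipate is the rigorous derivation of the relative energy inequality on the torus: the strong solution $(\vr, \vu)$ lives only on $\Of$, and a suitable extension to $\tor$ is required in order to probe the DW formulation. Crucially, the penalty dissipation $\frac{1}{\penl}\intOs{|\bar\vu|^2}$ in~\eqref{pdw_E} forces $\bar\vu = 0$ a.e.\ in $\Os$, so the natural choice is to extend $\vu$ by zero across $\partial\Of$ (admissible because $\vu|_{\partial\Of}=0$); $\vr$ is extended smoothly while keeping the lower bound. Remainder terms from this extension, the variational-crime strip $\Of\setminus\Ofh$, and the momentum consistency error~\eqref{CS2-1-new} must all be absorbed into the Gronwall term; the coupling $h^3/\penl \to 0$ inherited from Theorem~\ref{THM2} is precisely what makes these remainders vanish in the joint limit $(h,\penl)\to 0$.
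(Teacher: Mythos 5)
Your proposal follows essentially the same route as the paper: the paper's proof of Theorem~\ref{THM3} is a two-line citation of the weak--strong uniqueness principle for dissipative solutions (Feireisl et al., Theorem 4.1 of the cited reference) together with the standard argument of \cite[Theorem 7.12]{FeLMMiSh}, which is precisely the relative-energy/Gronwall identification of the limit and the strict-convexity upgrade from weak to strong convergence that you spell out. The only (harmless) deviation is your worry about extending $(\vr,\vu)$ to the torus: since Theorem~\ref{THM2} already delivers a DW solution of the Dirichlet problem on $\Of$ in the sense of Definition~\ref{DW}, the weak--strong uniqueness argument is run directly on $\Of$ and no such extension is required.
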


\begin{proof}
By virtue of the weak-strong uniqueness principle established in \cite[Theorem 4.1]{FGSGW}, analogously as in \cite[Theorem 7.12]{FeLMMiSh}, we obtain that the numerical solutions $\{ \vrh, \vuh \}_{h,\penl \searrow 0}$ converge strongly to the strong solution $(\vr,\vu)$ on $\Of$.
\end{proof}

\section{Error estimates}\label{sec_EE}
Having proven the convergence of the FV method \eqref{VFV}, we proceed with the error analysis between the FV approximation of the penalized problem \eqref{ppde} and the strong solution to the Dirichlet problem \eqref{pde}.
 For simplicity of the presentation of main ideas, here and hereafter we consider a semi-discrete version of the FV method. In other words, we only study the error with respect to the spatial discretization.

In order to measure the distance between a FV solution of the penalized problem \eqref{ppde} and the strong solution to the Navier--Stokes problem \eqref{pde} we introduce the relative  energy functional
\begin{align}\label{RE}
\RE(\vrh, \vuh| \tvr,\tvu) = \intTd{\left( \frac12 \vrh \abs{\vuh- \tvu}^2 + \bbE(\vrh|\tvr) \right)}, \;
\bbE(\vrh|\tvr) =\Hc(\vrh) - \Hc'(\tvr) (\vrh -\tvr) -\Hc(\tvr).
\end{align}
 Here, $(\tvr,\tvu)$ is an extension of the strong solution as prescribed below.

\begin{Definition}[{\bf Extension of the strong solution}]
\label{def_ES}
Let $(\vr,\vu)$ be the strong solution in the sense of Definition \ref{def_ST}.
We say that $(\tvr, \tvu)$ is the extension of the strong solution $(\vr, \vu)$ if
\begin{equation}\label{EXT1}
(\tvr, \tvu)(x) =
\begin{cases}
(\vr_{0}^s , \, 0) \ & \mbox{if} \ \vx \in \Os,\\
(\vr, \, \vu) &\mbox{if} \  \vx \in \Of,
\end{cases} \quad  \mbox{ for any } t \in [0,T].
\end{equation}
\end{Definition}

\begin{Theorem}{\rm ({\bf Error estimates})}. \label{THM:ES}
Let $\gamma>1$, $(\vr_0, \vu_0) \in  W^{4,2}(\Of) \times W^{4,2}(\Of; \R^d), \vr_0^s \in C^2(\Os)$ and $\tvr_0 > 0$. 
Let $\{ \vrh, \vuh \}_{h,\penl \searrow 0}$ be a family of numerical solutions obtained by the semi-discrete version of FV method \eqref{VFV} with $(h, \penl) \in (0,1)^2$ and $\viso > 0$. 
Suppose that the numerical density $\vrh$ is uniformly bounded 
\begin{equation} \label{ass}
0<\vrh<  \Ov{\vr} \quad \mbox{uniformly for} \quad h,\penl \to 0
\end{equation}
and the Navier--Stokes system \eqref{pde} admits a global-in-time strong solution $(\vr,\vu)$  belonging to the class \eqref{STC}.

Then the following error estimate holds 
\begin{align}\label{REI2}
& \RE(\vrh, \vuh| \tvr, \tvu)(\tau)  +  \intTauTdB{\mu \abs{\Gradd \vuh- \Grad \tvu}^2 +  \nu \abs{\Divh \vuh- \Div \tvu}^2 }   + \frac{1}{\penl} \intTauOs{|\vuh|^2} \br
& \aleq    h^{\beta_{RE}} +  \frac{h^{3}}{\penl}  + \frac{\penl}{h},\quad 
\beta_{RE} = \min \{1,(1+\viso)/2,\viso \}.
\end{align}
Further, there hold 
\begin{align}\label{ee-u}
  \norm{(\tvr,\tvu,\tvr\tvu) - (\vrh,\vuh,\vrh\vuh)}^2_{L^{2}((0,T)\times\tor;\R^{2d+1})} 
+ \norm{\Gradd \vuh- \Grad \tvu}^2_{L^{2}((0,T)\times\tor;\R^{d\times d})}\aleq h^{\beta_{RE}} +  \frac{h^{3}}{\penl}  + \frac{\penl}{h}
\end{align}
and 
\begin{equation}\label{ee-d}
  \norm{\vrh-\tvr }^{\gamma}_{L^{\gamma}(\tor)} \aleq h^{\beta_{RE}} +  \frac{h^{3}}{\penl}  + \frac{\penl}{h} \quad \mbox{ if }  \gamma > 2. 
\end{equation} 
\end{Theorem}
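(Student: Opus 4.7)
\medskip
\noindent\textbf{Proof proposal.} The plan is to derive a discrete relative energy inequality comparing the semi-discrete FV solution of the penalized problem with the extended strong solution $(\tvr,\tvu)$, and then close it by Gronwall's lemma. Starting from the energy stability of Lemma~\ref{thm_ST}, I would rewrite
\[
\RE(\vrh,\vuh\,|\,\tvr,\tvu)(\tau) = \int_{\tor}\!\left(\tfrac12\vrh|\vuh|^2+\Hc(\vrh)\right)(\tau) \;-\; \int_{\tor}\!\bigl(\vrh\vuh\cdot\tvu\bigr)(\tau) + \int_{\tor}\!\bigl(\tfrac12\vrh|\tvu|^2-\Hc'(\tvr)\vrh+ p(\tvr)\bigr)(\tau),
\]
and expand each piece. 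The first piece is controlled by the discrete energy inequality. The second piece is handled by taking $\bfvarphi=\tvu$ in the momentum consistency formulation \eqref{CS2-new}. The third piece is handled by taking $\phi = \Hc'(\tvr)-\tfrac12|\tvu|^2$ in the continuity consistency formulation \eqref{CS1-new} together with an elementary time-integration of the smooth functional $\int_{\tor} p(\tvr)$. The total initial contribution on the right cancels up to an $\order(h)$ projection error since $\Pim\tvr_0,\Pim\widetilde\vm_0$ approximate $(\tvr_0,\widetilde\vm_0)$.

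The next step is to exploit the fact that $(\tvr,\tvu)$ satisfies the Navier--Stokes system \emph{pointwise on $\Of$} while $\tvu\equiv 0$ on $\Os$. Using these pointwise identities to replace $\partial_t\tvr$, $\partial_t\tvu$, and the pressure/viscosity terms, and assembling the three pieces, yields an inequality of the form
\begin{align*}
\RE(\vrh,\vuh\,|\,\tvr,\tvu)(\tau)&+\mu\!\intTauTd{|\Gradd\vuh-\Grad\tvu|^2}+\nu\!\intTauTd{|\Divh\vuh-\Div\tvu|^2}+\tfrac{1}{\penl}\intTauOs{|\vuh|^2}\\
&\lesssim \text{(initial error)}+|e_\vr|+|e_{\vm}|+\mathcal{R}_{NS}+\mathcal{P}+\int_0^\tau \RE(\vrh,\vuh\,|\,\tvr,\tvu)\,\dt,
\end{align*}
where $\mathcal{R}_{NS}$ collects the standard quadratic relative-energy remainders that are absorbed into $\int_0^\tau\RE\,\dt$ (using the assumed uniform bound \eqref{ass} on $\vrh$ and the regularity of $(\tvr,\tvu)$), and $\mathcal{P}$ contains the penalty-related terms arising from testing with $\tvu$. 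By Lemma~\ref{thm_CS} with $\viso>0$ the consistency errors yield the $h^{\beta_{RE}}$ bound with $\beta_{RE}=\min\{1,(1+\viso)/2,\viso\}$, while the refined estimate of the penalty consistency error as in the proof of Theorem~\ref{THM2} contributes the $h^3/\penl$ term.

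The key obstacle, and the source of the $\penl/h$ contribution, is the penalty pairing $\tfrac{1}{\penl}\intTauOs{\vuh\cdot\tvu}$ together with the mismatch between $\Os$ and the computational $\Osh$. Since $\tvu$ vanishes on $\partial\Of$ but not inside the thin strip $\Osh\setminus\Os$, using the no-slip boundary condition I can Taylor-expand to get $|\tvu|\lesssim h$ in this strip of thickness $\approx h$; combining this with Cauchy--Schwarz against $\tfrac{1}{\penl}|\vuh|^2$ absorbs half of the penalty dissipation on the left and leaves a remainder controlled by $\penl/h$. With all remainders estimated, Gronwall's inequality delivers \eqref{REI2}.

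Finally, the $L^2$ and $L^\gamma$ bounds \eqref{ee-u}--\eqref{ee-d} follow from \eqref{REI2} by standard coercivity of the relative energy: under $0<\vrh<\Ov\vr$ and $\tvr$ bounded away from $0$, the quadratic form $\tfrac12\vrh|\vuh-\tvu|^2$ dominates $|\vuh-\tvu|^2$ and hence, via the momentum identity $\vrh\vuh-\tvr\tvu=\vrh(\vuh-\tvu)+(\vrh-\tvr)\tvu$, also $|\vrh\vuh-\tvr\tvu|^2$; the discrete Korn/Poincar\'e-type control from the viscous dissipation upgrades this to the gradient norm. For the density, $\bbE(\vrh|\tvr)\gtrsim |\vrh-\tvr|^2\chi_{\{|\vrh-\tvr|\le 1\}}+|\vrh-\tvr|^\gamma\chi_{\{|\vrh-\tvr|> 1\}}$, and for $\gamma>2$ the second regime dominates, giving \eqref{ee-d}.
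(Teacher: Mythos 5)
Your overall strategy coincides with the paper's: combine the energy balance with the continuity consistency identity tested by $\frac12|\tvu|^2-\Hc'(\tvr)$ and the momentum consistency identity tested by $\pm\tvu$, use the pointwise Navier--Stokes equations on $\Of$ together with $\tvu\equiv0$ on $\Os$, and close by Gronwall; your coercivity step for \eqref{ee-u}--\eqref{ee-d} is likewise the paper's (Lemmas~\ref{lmRE} and \ref{lmSP2}). However, there is a concrete gap in where you locate the difficulty. The penalty pairing $\frac{1}{\penl}\intTauOs{\vuh\cdot\tvu}$ that you designate as the source of the $\penl/h$ contribution vanishes identically, because the extension \eqref{EXT1} sets $\tvu\equiv 0$ on all of $\Os$, not merely on $\partial\Of$. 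The only penalty-related error is the mismatch integral $\frac{1}{\penl}\int_0^\tau\int_{\Osh\setminus\Os}\vuh\cdot\tvu\,\dxdt$, and after Young's inequality with $|\tvu|\aleq h$ on that strip of measure $\aleq h$ it yields $h^3/\penl$ plus an absorbable $\delta\penl^{-1}\norm{\vuh}^2_{L^2((0,\tau)\times\Osh)}$ --- it does not produce $\penl/h$.

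The $\penl/h$ term actually originates from the boundary layer $\Och$, where $\tvr$ and $\Grad\tvu$ are discontinuous, so the ``standard quadratic relative-energy remainders'' you propose to absorb into $\int_0^\tau\RE\,\dt$ are not standard there: the consistency errors of the upwind/convective and artificial-viscosity terms and the viscous residuals $R_3=-\mu\intTauOf{\big(\Gradd\vuh:\Grad\tvu+\vuh\cdot\Lap\tvu\big)}$ and $R_4$ produce, after discrete integration by parts, terms of size $h^{-1}\intTauOch{|\vuh|}$ (for instance because $|\Laph\Pim\tvu|\aleq h^{-1}$ on $\Och$, cf.\ \eqref{EXTE4}). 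The paper controls these by splitting the mesh into $\OI_h\cup\Och\cup\OO_h$ and proving, in Lemma~\ref{lm_m}, that $\intTauOch{|\vuh|}\aleq h^2+\penl+h\big(\delta\mu\norm{\Gradd\vuh-\Grad\tvu}^2_{L^2((0,\tau)\times\tor)}+\delta\penl^{-1}\norm{\vuh}^2_{L^2((0,\tau)\times\Osh)}\big)$, i.e.\ by propagating $\vuh$ from the exterior cells (where the penalty dissipation controls it) across the layer via $h\int_{\Och}|\Gradd\vuh|$; dividing by $h$ is precisely what produces $\penl/h$. Without this boundary-layer machinery your sketch cannot be closed, so this is a missing idea rather than a cosmetic omission.
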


\begin{Remark}
The error estimates in Theorem~\ref{THM:ES} confirm the strong convergence of the penalty method \eqref{VFV}. By a closer inspection we observe the optimal first order convergence rate for the relative energy, i.e.\ $h^{\beta_{RE}} +  \frac{h^{3}}{\penl}  + \frac{\penl}{h} \approx h $, by choosing $\penl = h^2$ and $\viso=1$. 
\end{Remark}

\begin{Remark}
We point out that the above error estimates can be generalized to the fully discrete method \eqref{VFV} by the same argument as in \cite[Theorem 6.2 or Appendix D]{LSY_penalty}.

On the other hand, the error estimates may be proven without assuming the upper bound on density \eqref{ass}. However, we would obtain worse convergence rate and need some constrain on $\gamma$. We leave the details to interested readers.  
\end{Remark}

\begin{proof}[Proof of Theorem~\ref{THM:ES}]
The estimates \eqref{ee-u} and \eqref{ee-d} directly follow from  Lemmas~\ref{lmRE} and \ref{lmSP2} once the estimates \eqref{REI2} is proven. Hence, the key of the proof is to show \eqref{REI2}.  In what follows we show the main idea of the proof leaving the technical details to Appendix~\ref{sec_pee}. 

In order to show \eqref{REI2}, we study the relative energy balance by combining the energy estimate and the consistency formulations with suitable test functions. 
More precisely, we collect the energy estimate \eqref{ST},  the density consistency formulation \eqref{CS1-new-1} with the test function $\phi = \frac12  \abs{\tvu}^2-\Hc'(\tvr)$ and the momentum consistency formulation \eqref{CS2-new} with the test function $\bfvarphi=- \tvu$. It yields
\begin{multline}\label{REb}
 \left[ \RE(\vrh, \vuh| \tvr, \tvu)  \right]_{t=0}^{t=\tau}  +  \intTauOB{\mu \abs{\Gradd \vuh- \Grad \tvu}^2 +  \nu \abs{\Divh \vuh- \Div \tvu}^2 }
 \\
 + \frac{1}{\penl} \intTauOsh{|\vuh|^2}
  =
- \int_0^{\tau} D_{num}\, \dt + e_S  +e_R,
\end{multline}
see Appendix~\ref{sec_REB} for details. Here, $D_{num} \geq 0$ given in Lemma~\ref{thm_ST} represents the numerical dissipations. The consistency errors $e_S$ and residual errors $e_R$ are stated in \eqref{ee-SR}. They satisfy the following estimate
\begin{multline*}
\abs{e_S} +|e_R| \aleq
  h^{\beta_{RE}}  +  \int_0^\tau \RE(\vrh, \vuh| \tvr, \tvu) \dt + \frac{\delta}{\penl} {\norm{\vuh}_{L^2(0,\tau)\times\Osh)}^2}  
   \\  + \delta \mu\norm{ \Gradd  \vuh - \Grad \tvu}_{L^2((0,\tau)\times\tor)}^2 + \delta \nu\norm{ \Divh  \vuh - \Div \tvu}_{L^2((0,\tau)\times\tor)}^2. 
\end{multline*}
This inequality is obtained by splitting the grid into interior, exterior, and close to boundary parts,  since $\tvr, \Grad \tvu$ may lose regularity across the boundary. Then delicate estimates of the consistency and residual errors by means of the terms on the left hand side of \eqref{REb}
yield the desired result, see Appendix \ref{sec_ceb} (Lemma~\ref{lm_es}) and Appendix \ref{sec_er} (Lemma~\ref{lm_er}) for details. 

With the above estimate in hand, we choose any fixed $\delta \in(0,1)$ to obtain the following relative energy inequality
\begin{multline*}
\left[ \RE(\vrh, \vuh| \tvr, \tvu)  \right]_{t=0}^{t=\tau} +  \intTauTd{\left(\mu \abs{\Gradd \vuh- \Grad \tvu}^2 +  \nu \abs{\Divh \vuh- \Div \tvu}^2 \right)}
\\
 + \frac{1}{\penl} \intTauOsh{|\vuh|^2}
 \aleq   \int_0^\tau  \RE(\vrh, \vuh| \tvr, \tvu) \dt     + h^{\beta_{RE}} + \frac{h^3}{\penl}   + \frac{\penl}{h}. 
\end{multline*}
Finally, by Gronwall's lemma and the continuity of the initial data, we obtain from the above inequality that
\begin{multline*}
\RE(\vrh, \vuh| \tvr, \tvu)(\tau)  +  \intTauTdB{\mu \abs{\Gradd \vuh- \Grad \tvu}^2 +  \nu \abs{\Divh \vuh- \Div \tvu}^2 }   + \frac{1}{\penl} \intTauOsh{|\vuh|^2}
\\ \aleq
 \frac{h^3}{\penl} + \frac{\penl}{h} +h^{\beta_{RE}} + \RE(\vrh^0, \vuh^0 | \Pim \tvr_0, \Pim \tvu_0) \aleq  \frac{h^3}{\penl} + \frac{\penl}{h}+ h^{\beta_{RE}}.
\end{multline*}
This concludes the proof. 
\end{proof}


\section{Numerical experiments}\label{sec_num}
In this section our aim is to validate the obtained theoretical convergence results.
To this end, we compute the following errors. Firstly, the errors with respect to the discretization parameter $h$, i.e.~we fix a penalty parameter  $\penl$ and use a reference solution with a small $h_{ref}$:
\begin{align*}
&E_{\vr}^{\penl} = \| \vrh^{\penl} - \vr_{h_{ref}}^{\penl}  \|_{L^\gamma(\mathbb{T}^2)}, \hspace{2.3cm}
E_{\vu}^{\penl}= \| \vuh^{\penl} - \vu_{h_{ref}}^{\penl}  \|_{L^{2}(\mathbb{T}^2)}, \quad
\\&
E_{\nabla \vu}^{\penl}= \| \Gradd \vuh^{\penl} - \Gradd \vu_{h_{ref}}^{\penl}  \|_{L^2(\mathbb{T}^2)}, \hspace{1cm}
\RE^{\penl}=\RE \left(\vrh^{\penl}, \vuh^{\penl} ~ \big| ~ \vr_{h_{ref}}^{\penl}  , \vu_{h_{ref}}^{\penl}  \right).
\end{align*}
Secondly, we consider the errors with respect to two parameters $h,\, \penl$. Thus,  the reference solution has a fixed parameter pair $(h_{ref}, \penl_{ref})$:
\begin{align*}
&E_{\vr} = \| \vrh^{\penl} - \vr_{h_{ref}}^{\penl_{ref}} \|_{L^\gamma(\mathbb{T}^2)}, \hspace{2.4cm}
E_{\vu}= \| \vuh^{\penl} - \vu_{h_{ref}}^{\penl_{ref}} \|_{L^{2}(\mathbb{T}^2)}, \quad
\\&
E_{\nabla \vu}= \| \Gradd \vuh^{\penl} - \Gradd \vu_{h_{ref}}^{\penl_{ref}} \|_{L^2(\mathbb{T}^2)}, \hspace{1cm}
\RE=\RE \left(\vrh^{\penl}, \vuh^{\penl} ~ \big| ~ \vr_{h_{ref}}^{\penl_{ref}} , \vu_{h_{ref}}^{\penl_{ref}} \right).
\end{align*}
In the simulation we take the following parameters
\begin{equation*}
\viso = 0.6, \ T=0.1, \;  \mu = 0.1, \ \nu = 0, \   a = 1, \ \gamma = 1.4.
\end{equation*}

We recall that $E_\vr^{\penl}, E_{\vu}^{\penl}, E_{\Grad \vu}^{\penl}, R_E^{\penl}$ are used to verify the convergence rate with respect to mesh parameter $h$, cf. Theorem \ref{THM1}.
Errors $E_\vr, E_{\vu}, E_{\Grad \vu}, R_E$ (with respect to the parameter pair $(h,\penl(h))$) are used to illustrate our convergence results in Theorem~\ref{THM2}, Theorem~\ref{THM3} and Theorem~\ref{THM:ES}.

\subsection{Experiment~1:  Ring domain - continuous extension}
In this experiment we take the physical fluid domain to be a ring, i.e. $\Of \equiv B_{0.7}\setminus  \Ov{B_{0.2}}$, where $ B_{r} = \{x ~\big|~ |x| < r\}$.
The initial data (including the smooth extension) are given by
\begin{equation*}
	(\vr, \vu)(0,x)
	\; = \; \begin{cases}
	(1, \, 0,  \, 0 ) , & x \in B_{0.2}, \\
	\left(1, \,  \frac{ \sin(4\pi (|x|-0.2)) x_2}{|x|} , \, -\frac{ \sin(4\pi (|x|-0.2)) x_1}{|x|} \right) , & x \in  \Of \equiv {B}_{0.7}\setminus \Ov{B_{0.2}}, \\
	(1, \, 0 ,  \, 0) , & x \in \mathbb{T}^2\setminus B_{0.7}.
	\end{cases}
\end{equation*}
Figure \ref{fig:ex1} shows the numerical solutions $\vrh$ and $\vuh$ at time $T=0.1$ with fixed mesh size $h=0.2\cdot 2^{-4}$ and various penalty parameter $\penl = 4^{-3},\dots,4^{-6}$.
We can observe that the velocity vanishes in the penalized region with decreasing $\penl$.
Further, in Figure~\ref{fig:ex1-1} we present the errors $E_\vr^{\penl}, E_{\vu}^{\penl}, E_{\Grad \vu}^{\penl}, R_E^{\penl}$ with respect to $h = 0.2\cdot 2^{-m}, m = 0,\dots,3 $ for fixed $\penl \in \{4^{-2},4^{-3},4^{-4},4^{-5},4^{-6} \} $ and $h_{ref} = 0.2\cdot 2^{-4}$.
Figure \ref{fig:ex1-3} depicts the errors $E_\vr, E_{\vu}, E_{\Grad \vu}, R_E$
with respect to the three parameter pairs $(h,\penl(h)) = (h, \mathcal{O}(h^{1/2})), \ (h, \mathcal{O}(h^2))$ and $ (h, \mathcal{O}(h^4))$ given by
\begin{align*}
& (h,\penl(h)) = \left(0.2\cdot 2^{-m}, 2^{-(m+14)/2} \right), \quad m = 0,\dots,3 \quad \mbox{with} \quad h_{ref} = 0.2\cdot 2^{-4}, \ \penl_{ref} = 2^{-9}; \br
& (h,\penl(h)) = \left(0.2\cdot 2^{-m}, 4^{-(m+2)} \right), \quad m = 0,\dots,3 \quad \mbox{with} \quad h_{ref} = 0.2\cdot 2^{-4}, \ \penl_{ref} = 4^{-6}; \br
& (h,\penl(h)) = \left(0.2\cdot 2^{-m}, 16^{-m} \right), \quad m = 0,\dots,3 \quad \mbox{with} \quad h_{ref} = 0.2\cdot 2^{-4}, \ \penl_{ref} = 16^{-4}.
\end{align*}

Our numerical results indicate  first order convergence rate for $\vr,\vu, \Grad \vu$ and  second order convergence rate for $\RE$. 
Note that the experimental convergence rates are better than our theoretical result.

\begin{figure}[htbp]	\centering
	\setlength{\abovecaptionskip}{0.cm}
	\setlength{\belowcaptionskip}{-0.cm}
\vspace{-1em}
	\begin{subfigure}{0.48\textwidth}
		\includegraphics[width=\textwidth]{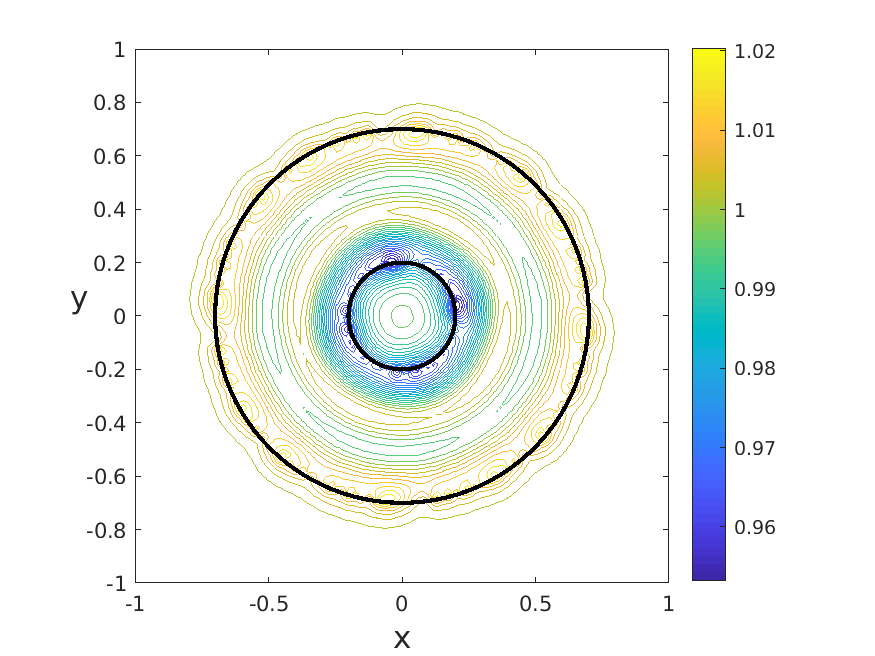}
	\end{subfigure}
		\begin{subfigure}{0.48\textwidth}\centering
			\includegraphics[width=\textwidth]{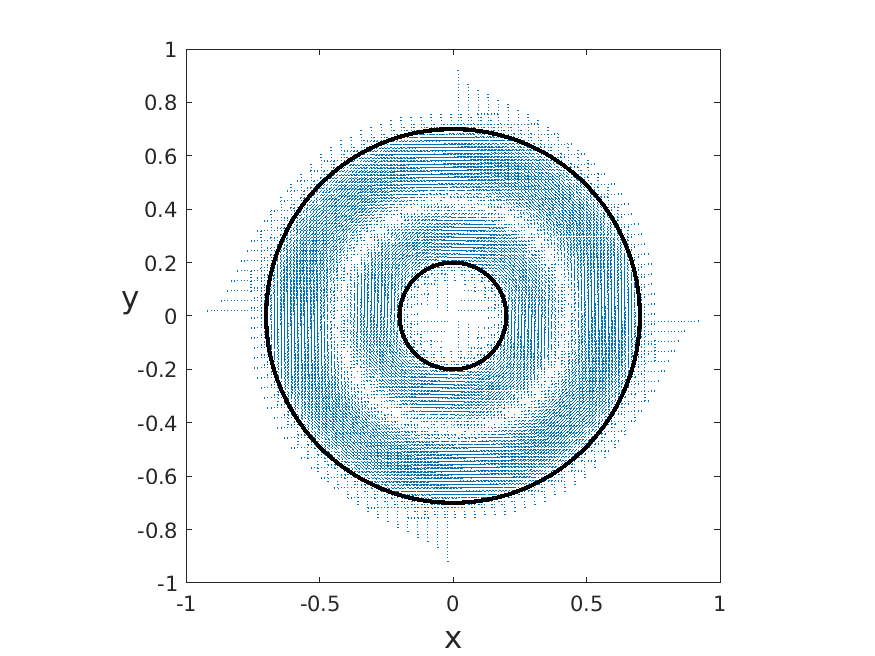}
	\end{subfigure}
\\ \vspace{-1em}
	\begin{subfigure}{0.48\textwidth}
		\includegraphics[width=\textwidth]{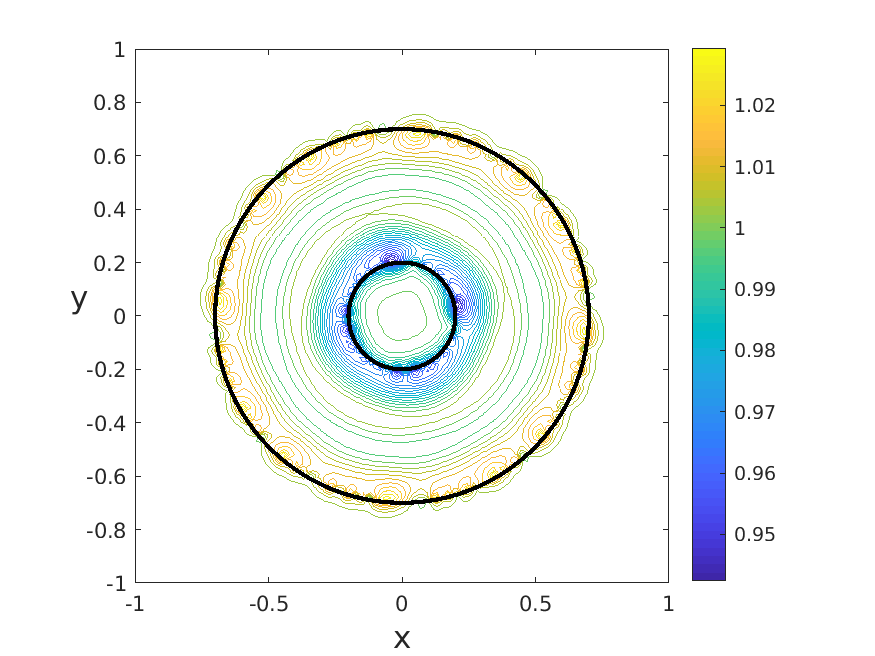}
	\end{subfigure}
	\begin{subfigure}{0.48\textwidth}
		\includegraphics[width=\textwidth]{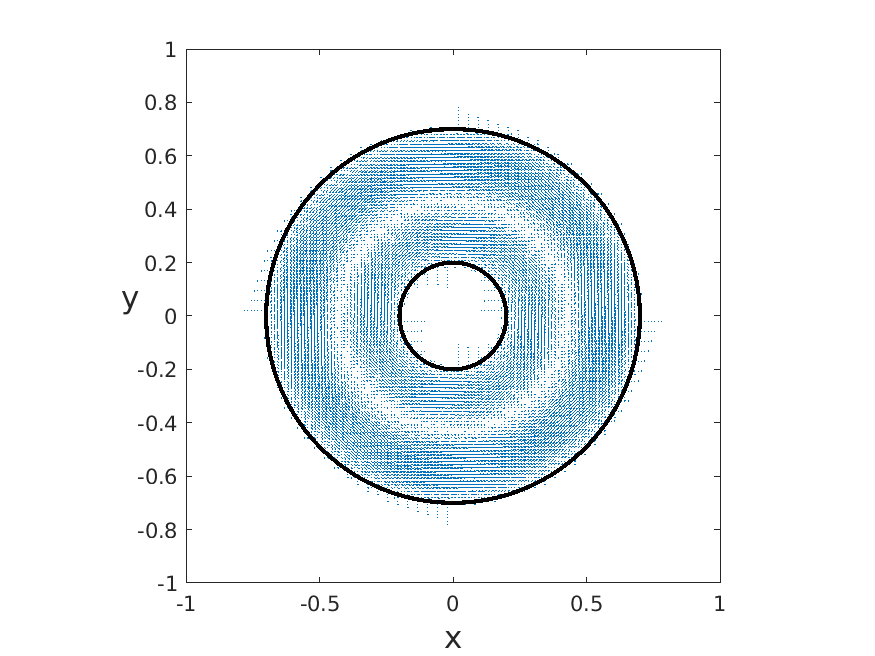}
	\end{subfigure}	
\\ \vspace{-1em}
	\begin{subfigure}{0.48\textwidth}
		\includegraphics[width=\textwidth]{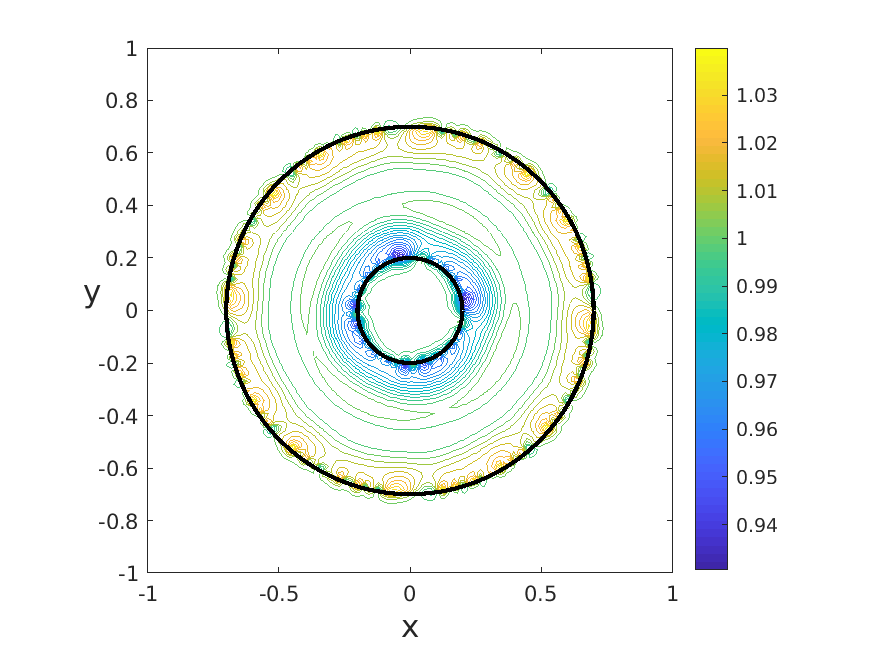}
	\end{subfigure}
	\begin{subfigure}{0.48\textwidth}
		\includegraphics[width=\textwidth]{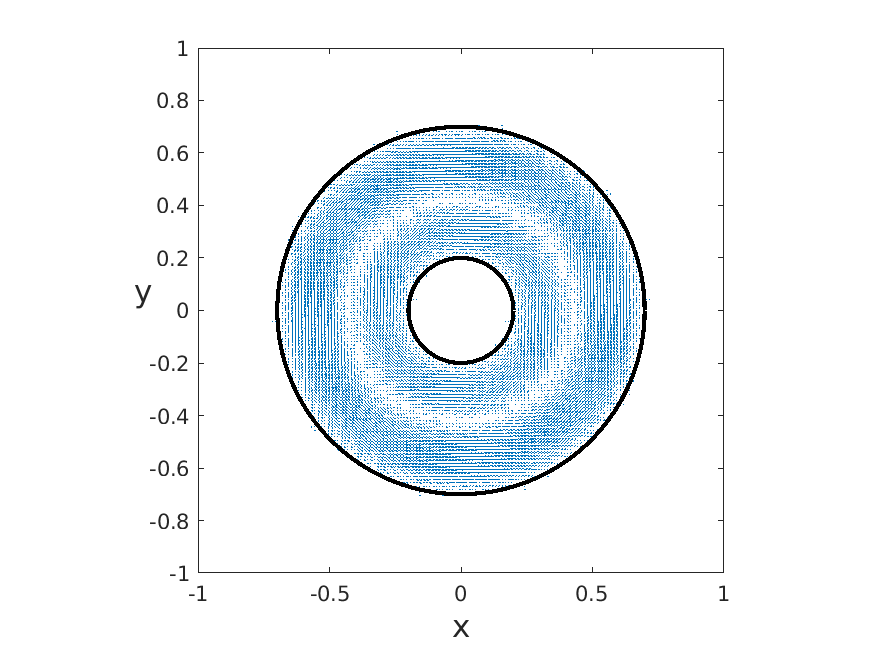}
	\end{subfigure}		
\\ \vspace{-1em}
	\begin{subfigure}{0.48\textwidth}
		\includegraphics[width=\textwidth]{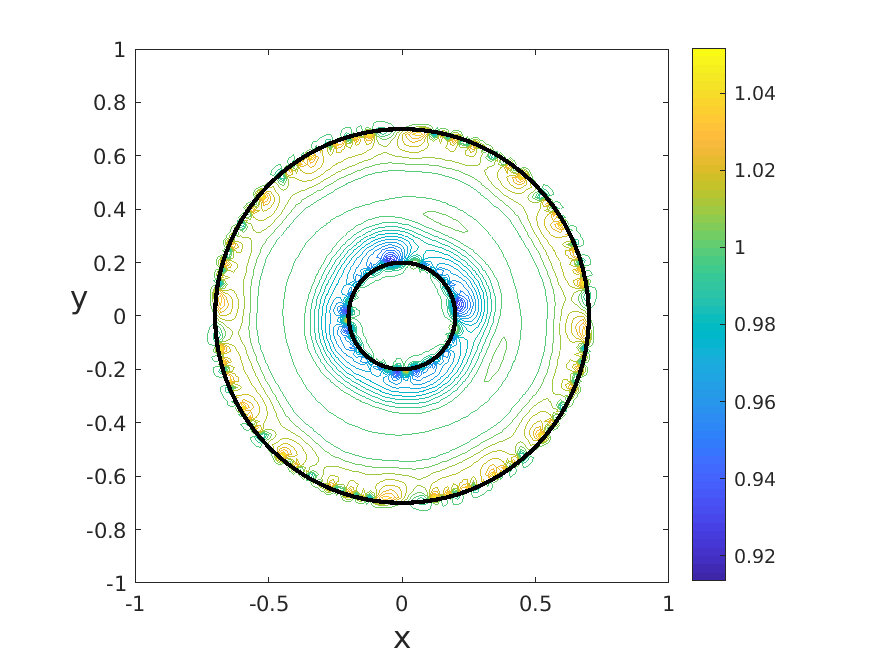}
	\end{subfigure}
	\begin{subfigure}{0.48\textwidth}
		\includegraphics[width=\textwidth]{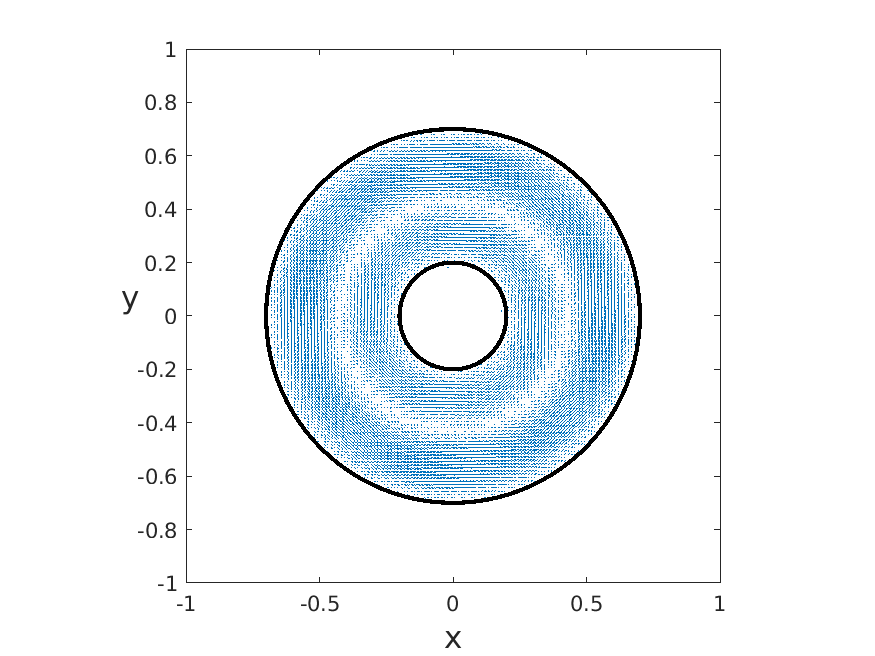}
	\end{subfigure}
	\caption{Experiment~1:
	Numerical solutions $\vrh$  (left) and $\vuh$ (right) obtained with $h = 0.2 \cdot 2^{-4}$ for different $\penl = 4^{-m-2}, m = 1, \dots, 4$ from top to bottom.}
	\label{fig:ex1}
\end{figure}

\begin{figure}[htbp]
	\setlength{\abovecaptionskip}{0.cm}
	\setlength{\belowcaptionskip}{-0.cm}
	\centering
	 \vspace{-1em}
	\begin{subfigure}{0.48\textwidth}
		\includegraphics[width=\textwidth]{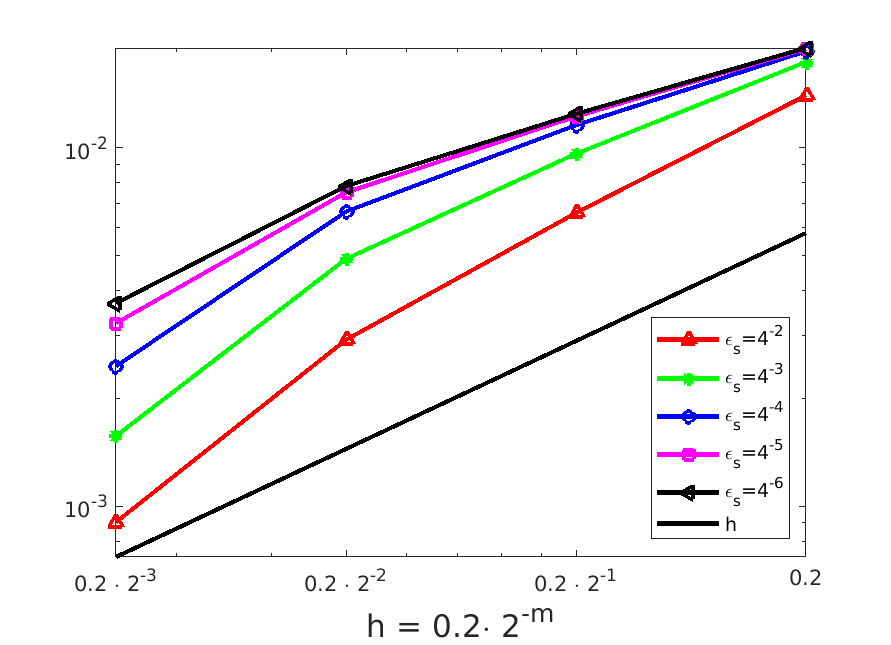}
		\caption{  $E_{\vr}^{\penl} $}
	\end{subfigure}
	\begin{subfigure}{0.48\textwidth}
		\includegraphics[width=\textwidth]{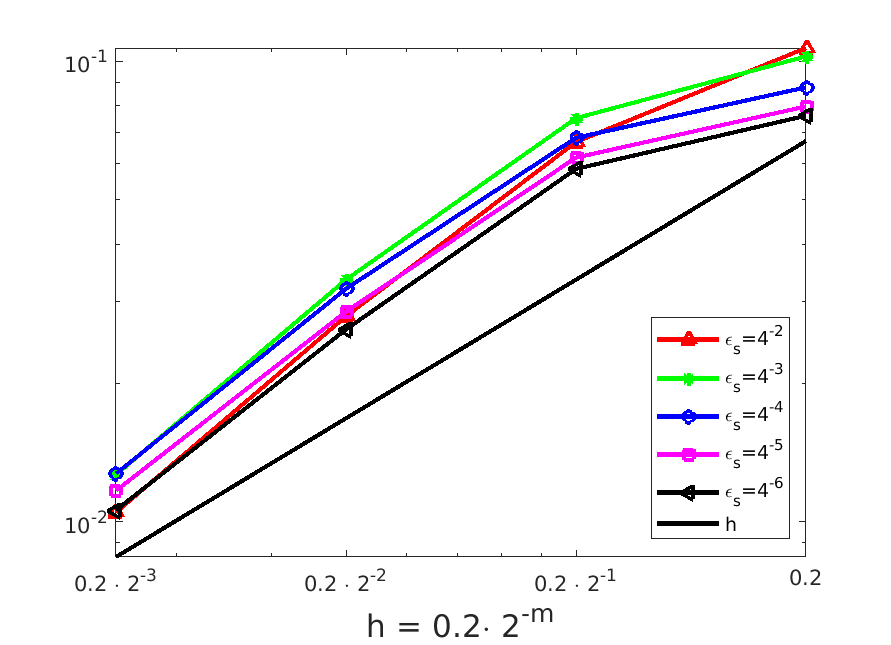}
		\caption{  $E_{\vu}^{\penl}  $}
	\end{subfigure}\\
	\begin{subfigure}{0.48\textwidth}
		\includegraphics[width=\textwidth]{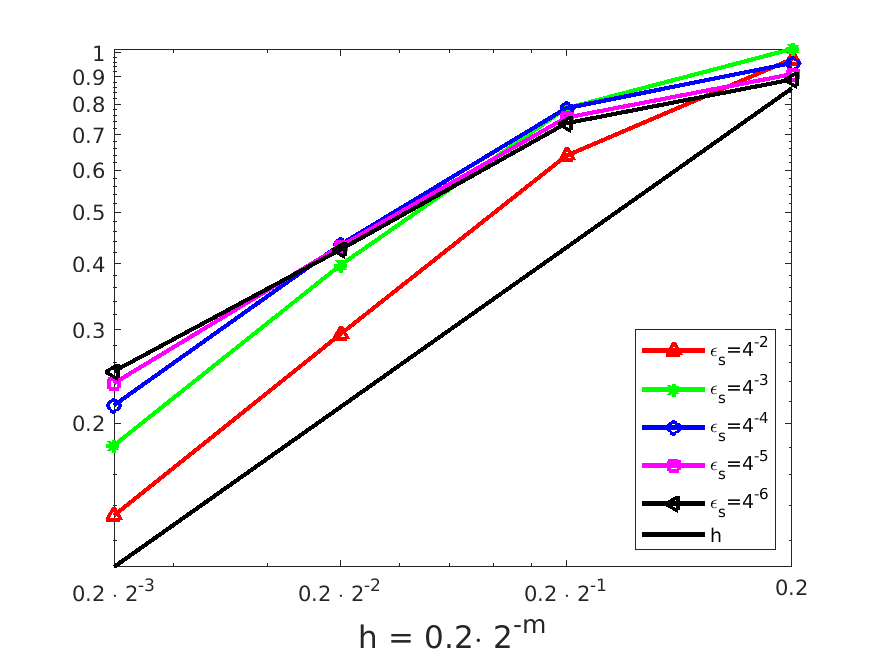}
		\caption{$E_{\Grad \vu}^{\penl}$ }
	\end{subfigure}
	\begin{subfigure}{0.48\textwidth}
		\includegraphics[width=\textwidth]{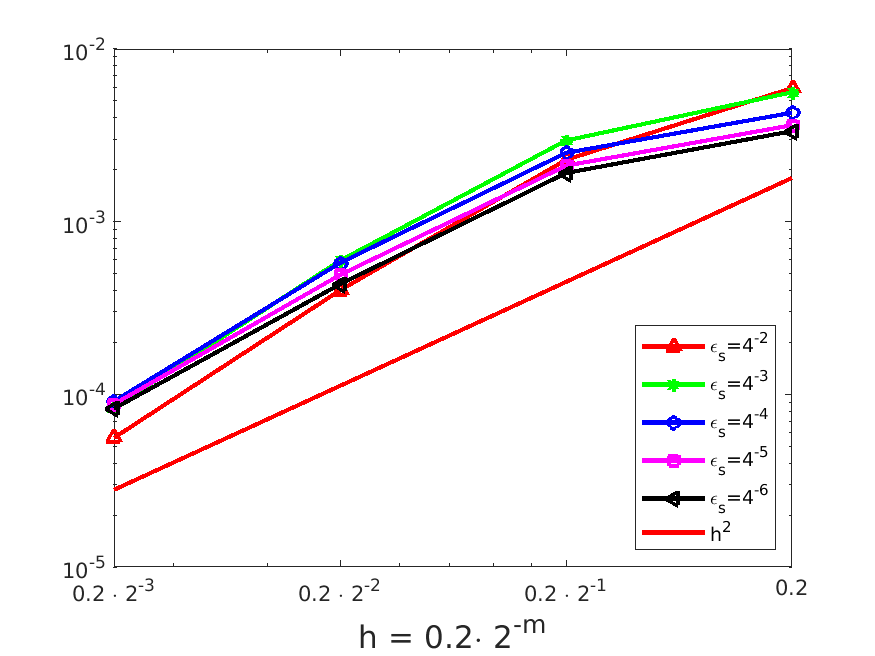}
		\caption{$R_E^{\penl}$ }
	\end{subfigure}
	\caption{Experiment~1:
	   The errors $E_\vr^{\penl}, E_{\vu}^{\penl}, E_{\Grad \vu}^{\penl}, R_E^{\penl}$ with respect to $h$ for different but fixed $\penl$. The black and red solid lines without any marker denote the reference slope of $h$ and $h^2$, respectively.
	}\label{fig:ex1-1}
	 \vspace{-1em}
\end{figure}

\begin{figure}[htbp]
	\setlength{\abovecaptionskip}{0.cm}
	\setlength{\belowcaptionskip}{-0.cm}
	\centering
	\begin{subfigure}{0.32\textwidth}
		\includegraphics[width=\textwidth]{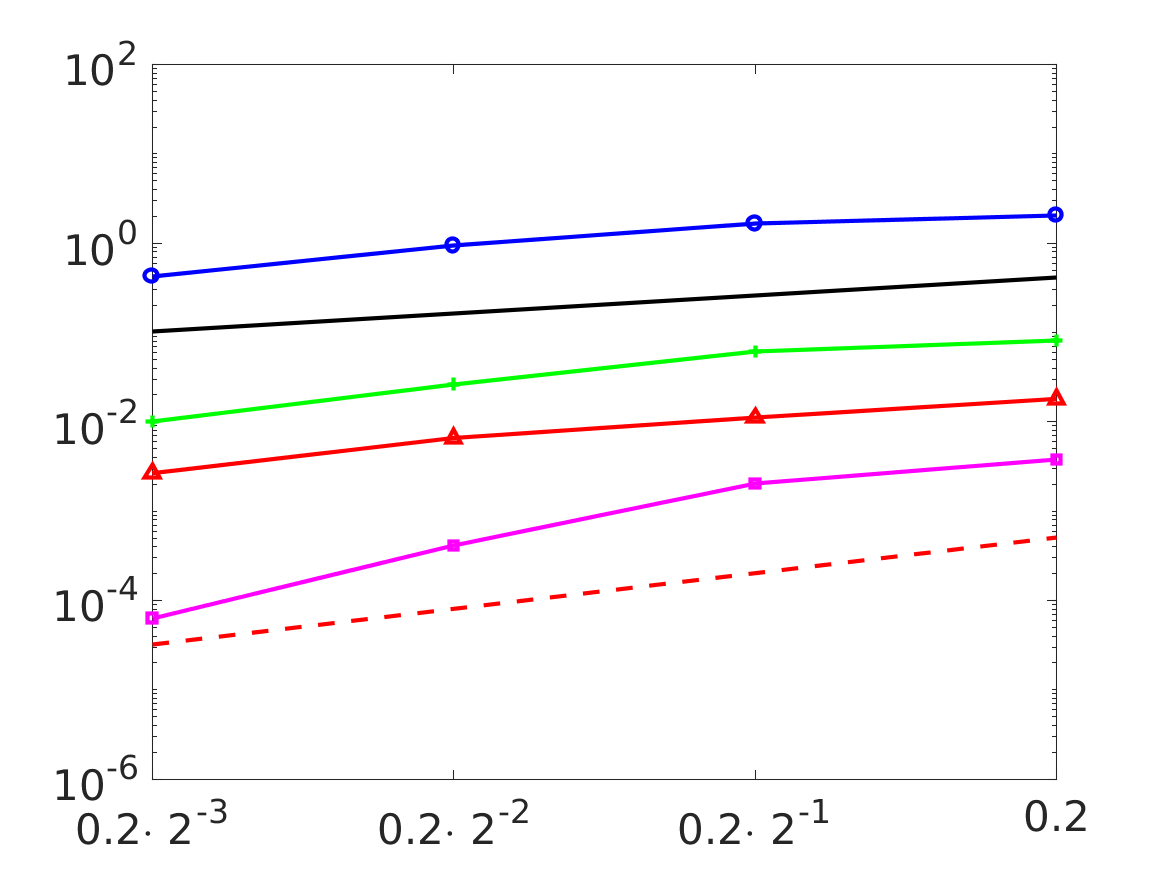}
	\end{subfigure}
	\begin{subfigure}{0.32\textwidth}
		\includegraphics[width=\textwidth]{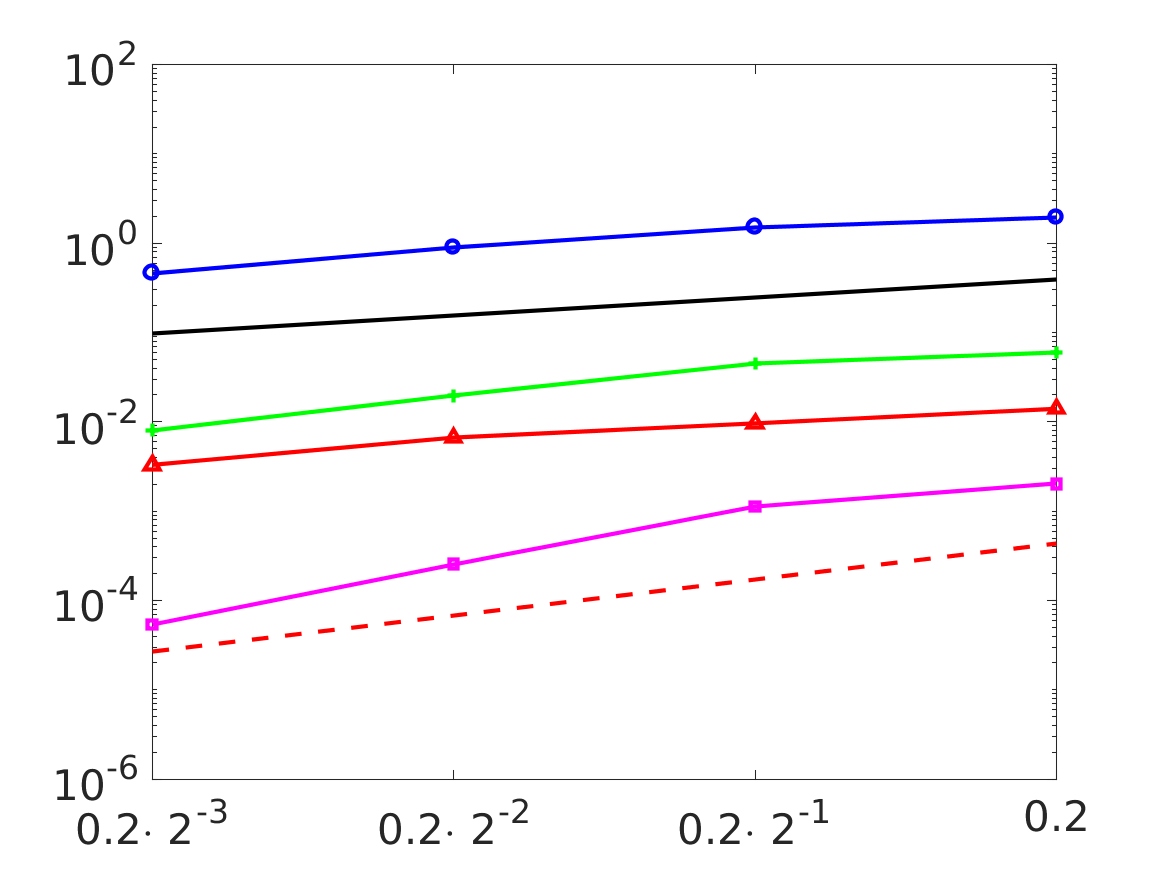}
	\end{subfigure}
	\begin{subfigure}{0.32\textwidth}
		\includegraphics[width=\textwidth]{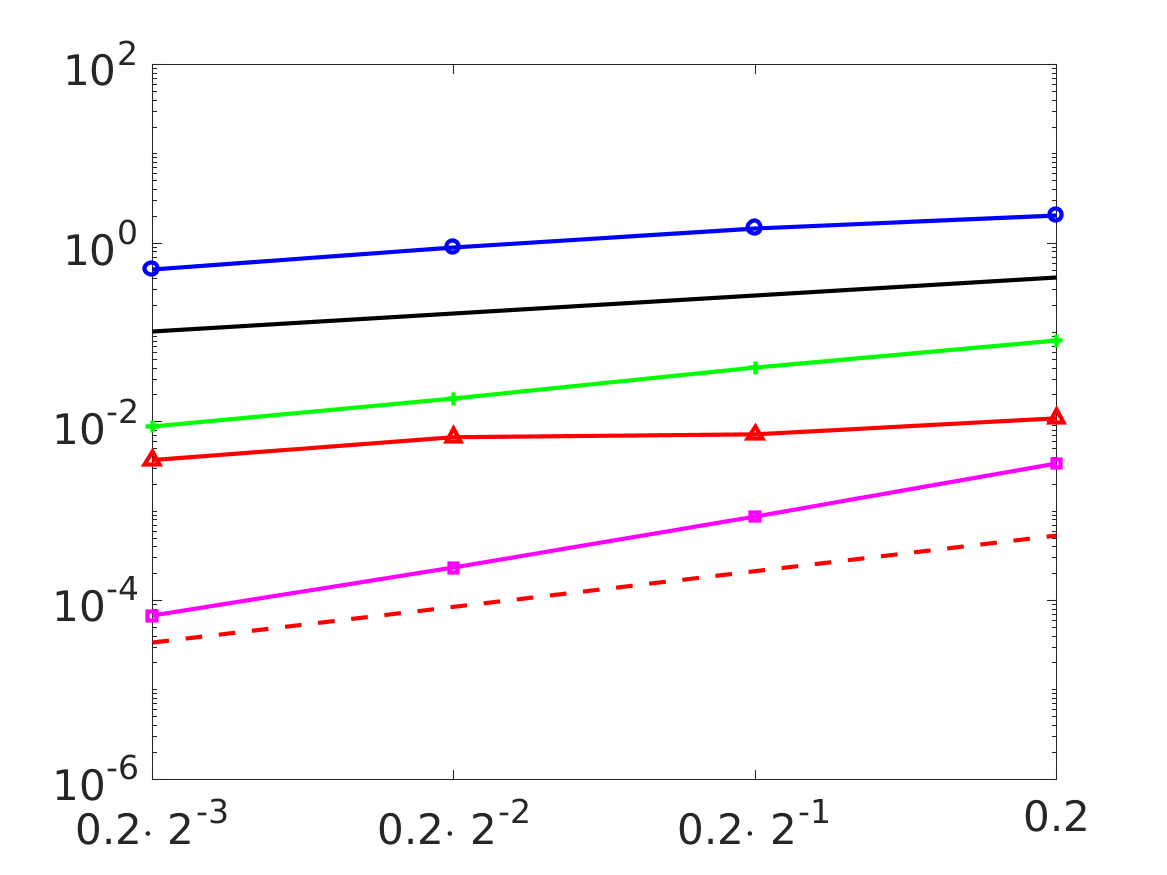}
	\end{subfigure}\\
	\begin{subfigure}{0.8\textwidth}
		\includegraphics[width=\textwidth]{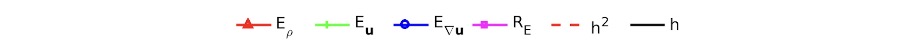}
	\end{subfigure}
	\caption{Experiment~1:
	Errors $E_\vr, E_{\vu}, E_{\Grad \vu}$ and relative energy $\RE$ with respect to the pairs $(h,\penl(h)) = \left(0.2\cdot 2^{-m}, 2^{-(m+14)/2} \right)$ (left), $\ \left(0.2\cdot 2^{-m}, 4^{-(m+2)} \right)$ (middle) and $\left(0.2\cdot 2^{-m}, 16^{-m} \right)$ (right), $m = 0,1,2,3$.
	The black solid and red dashed lines without any marker denote the reference slope of $h$ and $h^2$, respectively.
	}
	\label{fig:ex1-3}
		 \vspace{-1em}
\end{figure}

\subsection{Experiment~2: Ring domain - discontinuous extension}
In the second experiment we consider the same physical fluid domain, but different initial extension of density, i.e.
\begin{equation*}
	(\vr, \vu)(0,x)
	\; = \; \begin{cases}
	(0.01,\, 0, \,  0 ) , & x \in B_{0.2}, \\
	\left(1, \,  \frac{ \sin(4\pi (|x|-0.2)) x_2}{|x|} ,\,  -\frac{ \sin(4\pi (|x|-0.2)) x_1}{|x|} \right) , & x \in  \Of \equiv {B}_{0.7}\setminus \Ov{B_{0.2}}, \\
	(2,\,  0 , \,  0) , & x \in \mathbb{T}^2\setminus B_{0.7}.
	\end{cases}
\end{equation*}
The effect of different penalty parameters,  $\penl = 4^{-3},\dots,4^{-6},$ is present in Figure ~\ref{fig:ex2}.
The errors $E_\vr^{\penl}, E_{\vu}^{\penl}, E_{\Grad \vu}^{\penl}, R_E^{\penl}$ with respect to $h$ for fixed penalty parameters are shown in Figure~\ref{fig:ex2-1}. Figure~\ref{fig:ex2-3} presents the errors  $E_\vr, E_{\vu}, E_{\Grad \vu}, R_E$ with respect to the pair $(h,\penl(h)) = (h, \mathcal{O}(h^{1/2}))$, $(h, \mathcal{O}(h^2))$ and $(h, \mathcal{O}(h^4))$.
Figures \ref{fig:ex2-1} and \ref{fig:ex2-3} indicate similar convergence behaviours as in Experiment~1.

\begin{figure}[htbp]
	\setlength{\abovecaptionskip}{0.cm}
	\setlength{\belowcaptionskip}{-0.cm}
	 \vspace{-1em}
	\centering
	\begin{subfigure}{0.48\textwidth}
		\includegraphics[width=\textwidth]{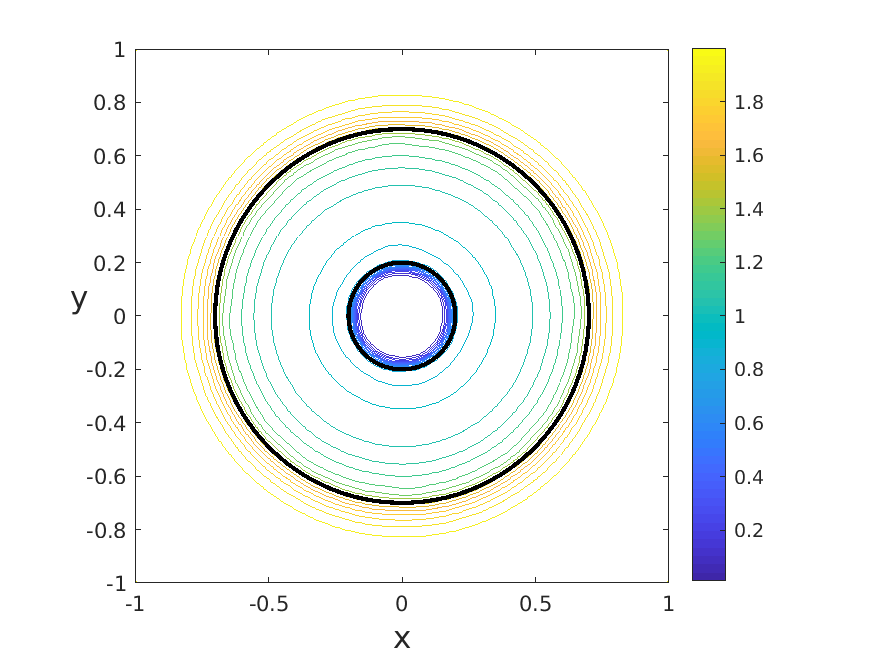}
	\end{subfigure}
	\begin{subfigure}{0.48\textwidth}
	\centering
			\includegraphics[width=\textwidth]{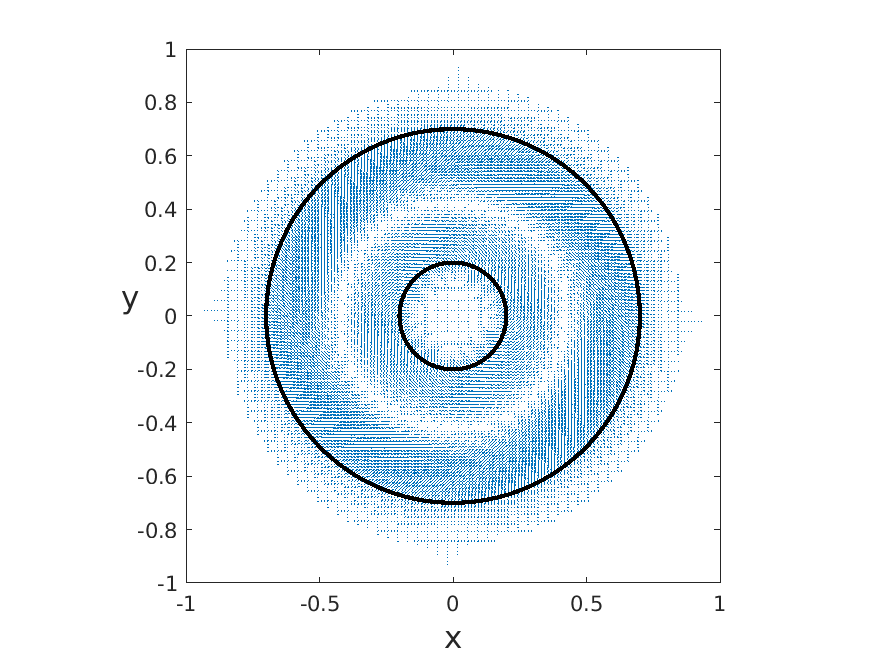}
	\end{subfigure}
\\ \vspace{-1em}
	\begin{subfigure}{0.48\textwidth}
		\includegraphics[width=\textwidth]{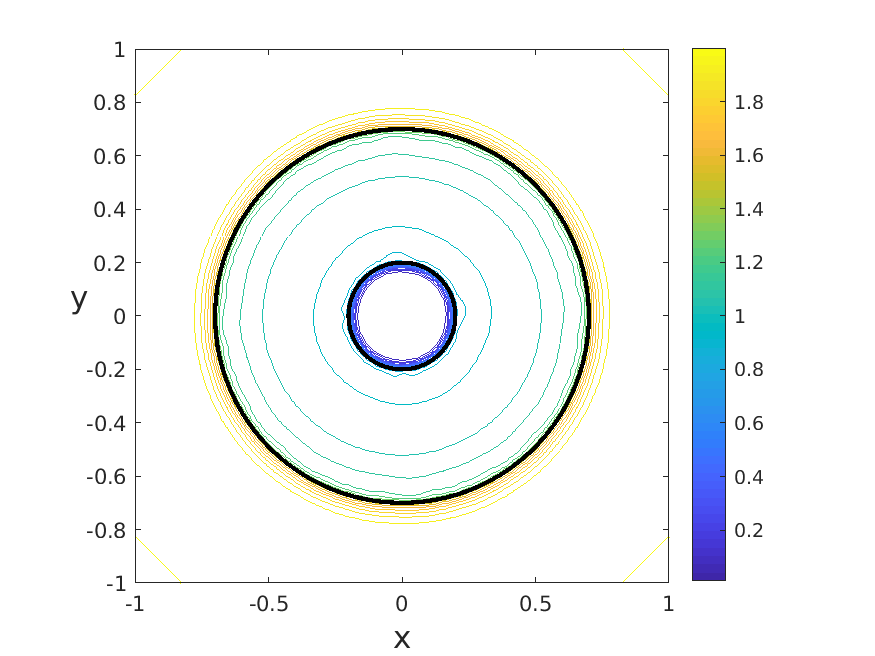}
	\end{subfigure}
		\begin{subfigure}{0.48\textwidth}
		\includegraphics[width=\textwidth]{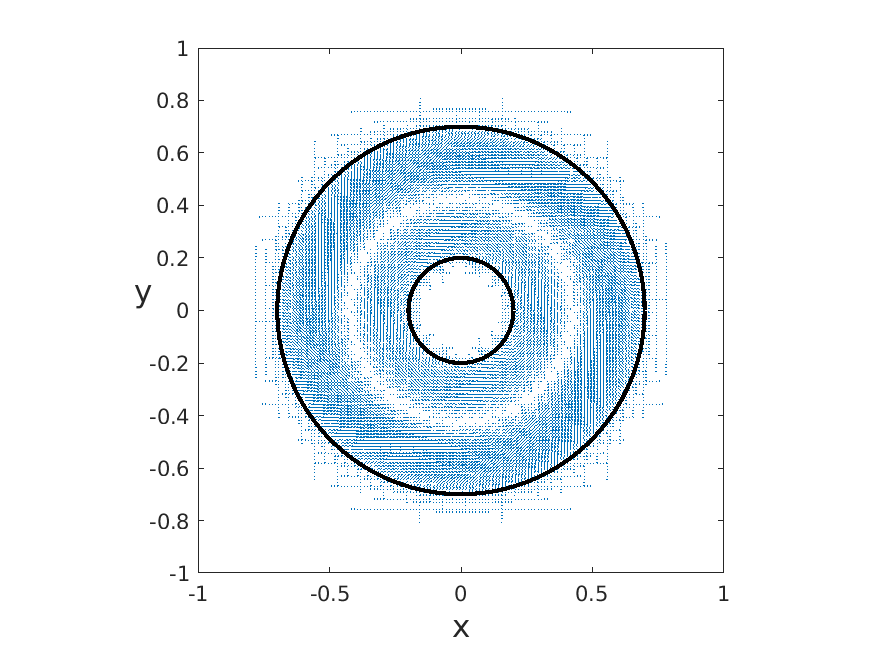}
	\end{subfigure}	
\\ \vspace{-1em}
	\begin{subfigure}{0.48\textwidth}
		\includegraphics[width=\textwidth]{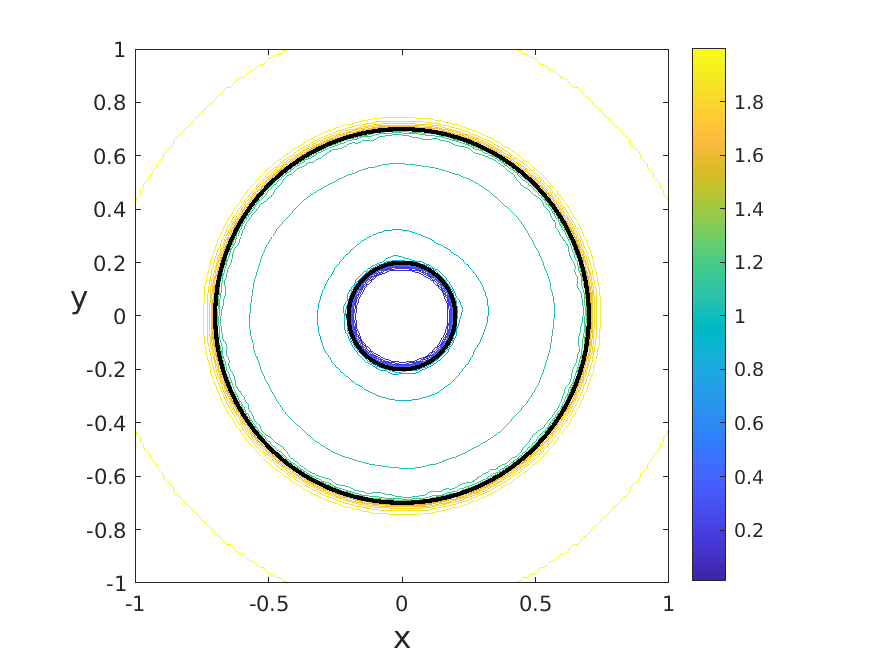}
	\end{subfigure}
		\begin{subfigure}{0.48\textwidth}
		\includegraphics[width=\textwidth]{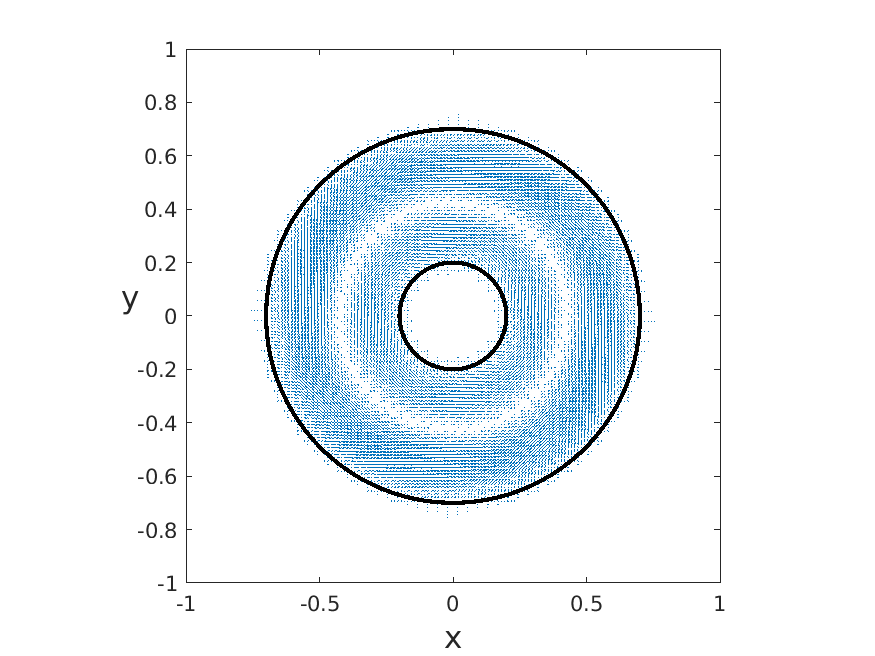}
	\end{subfigure}		
\\ \vspace{-1em}	
	\begin{subfigure}{0.48\textwidth}
		\includegraphics[width=\textwidth]{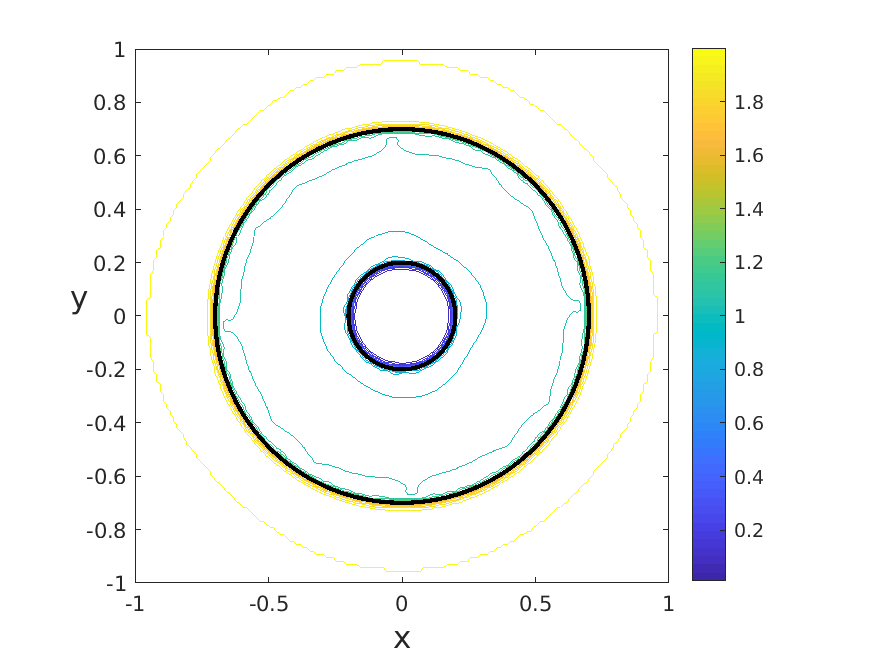}
	\end{subfigure}
	\begin{subfigure}{0.48\textwidth}
		\includegraphics[width=\textwidth]{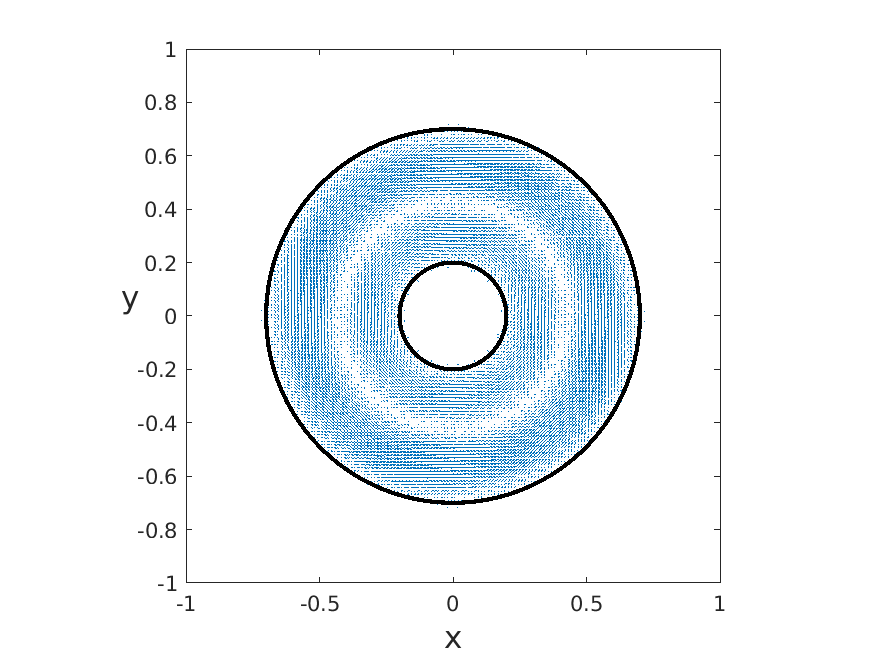}
	\end{subfigure}
	\caption{Experiment~2:
	Numerical solutions $\vrh$  (left) and $\vuh$ (right) obtained with $h = 0.2 \cdot 2^{-4}$ for different $\penl = 4^{-m-2}, m = 1, \dots, 4,$ from top to bottom.}
	\label{fig:ex2}
	\vspace{-2em}	
\end{figure}

\begin{figure}[htbp]
	\setlength{\abovecaptionskip}{0.cm}
	\setlength{\belowcaptionskip}{-0.cm}
	\centering
	\vspace{-2em}	
	\begin{subfigure}{0.48\textwidth}
		\includegraphics[width=\textwidth]{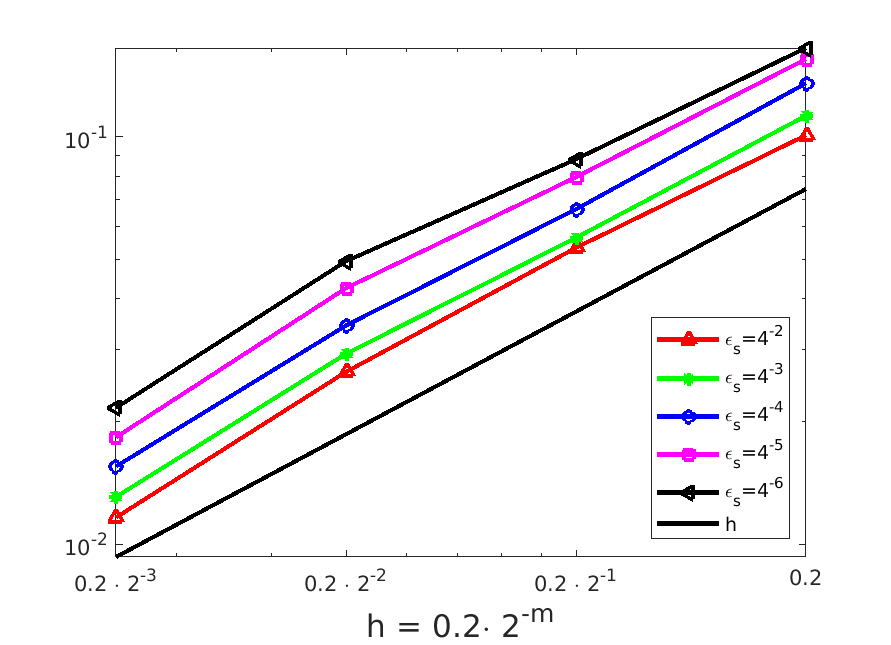}
		\caption{  $E_{\vr}^{\penl} $}
	\end{subfigure}
	\begin{subfigure}{0.48\textwidth}
		\includegraphics[width=\textwidth]{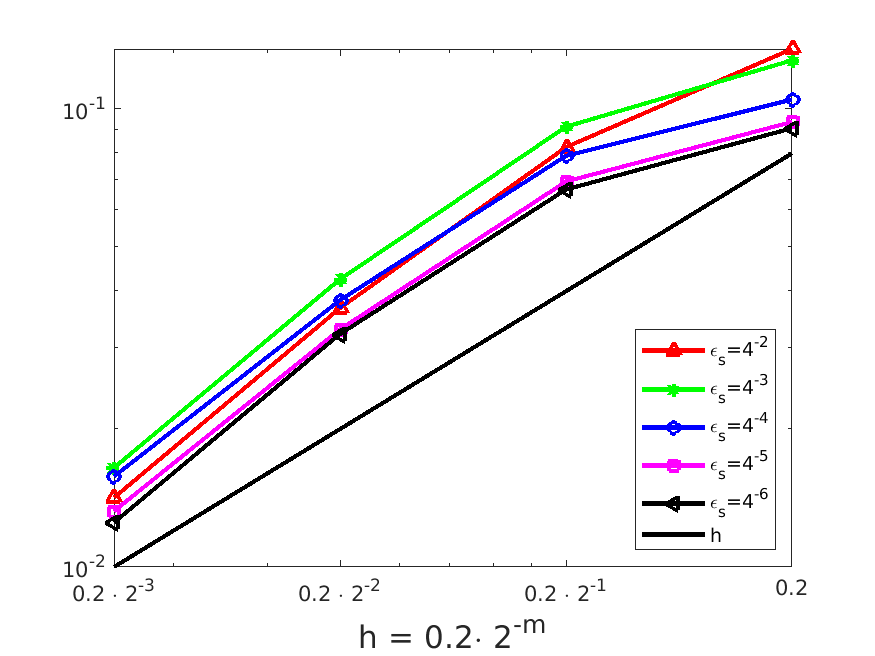}
		\caption{  $E_{\vu}^{\penl}  $}
	\end{subfigure}\\
	\begin{subfigure}{0.48\textwidth}
		\includegraphics[width=\textwidth]{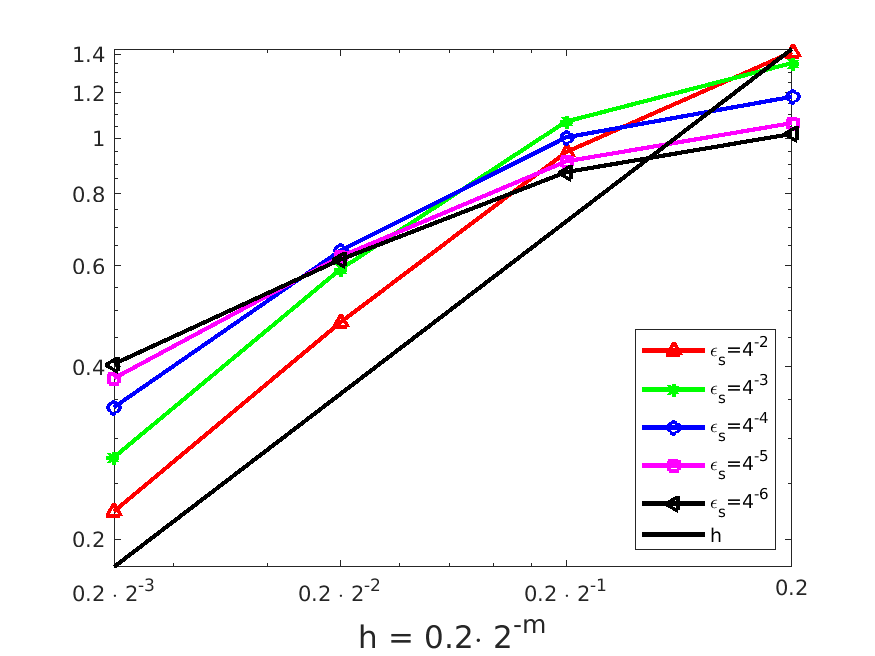}
		\caption{$E_{\Grad \vu}^{\penl}$ }
	\end{subfigure}
	\begin{subfigure}{0.48\textwidth}
		\includegraphics[width=\textwidth]{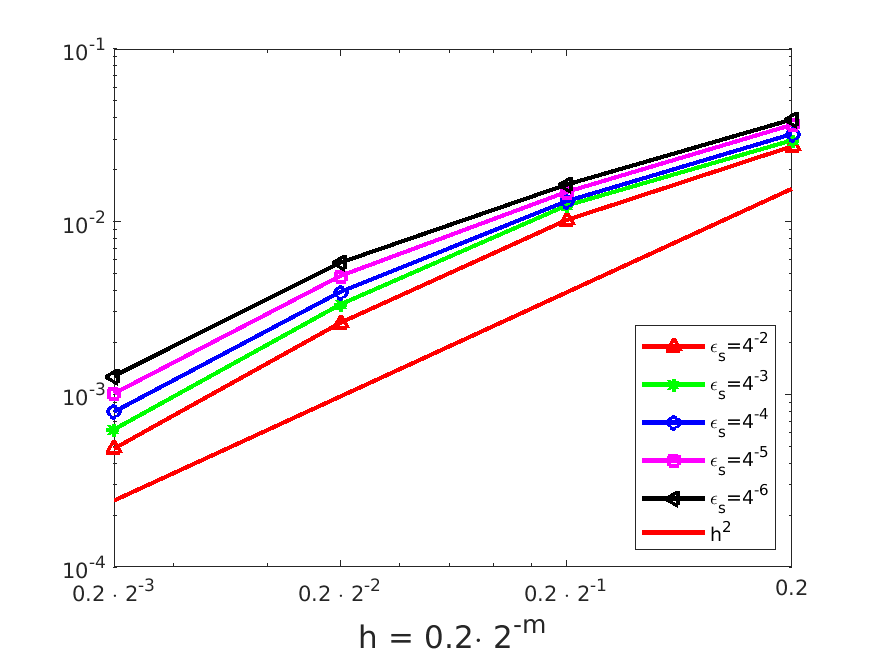}
		\caption{$R_E^{\penl}$ }
	\end{subfigure}
	\caption{Experiment~2:
	   The errors $E_\vr^{\penl}, E_{\vu}^{\penl}, E_{\Grad \vu}^{\penl}, R_E^{\penl}$ with respect to $h$ for different but fixed $\penl$. The black and red solid lines without any marker denote the reference slope of $h$ and $h^2$, respectively.
	}\label{fig:ex2-1}
		\vspace{-1em}	
\end{figure}

\begin{figure}[htbp]
	\setlength{\abovecaptionskip}{0.cm}
	\setlength{\belowcaptionskip}{-0.cm}
	\centering
	\begin{subfigure}{0.32\textwidth}
		\includegraphics[width=\textwidth]{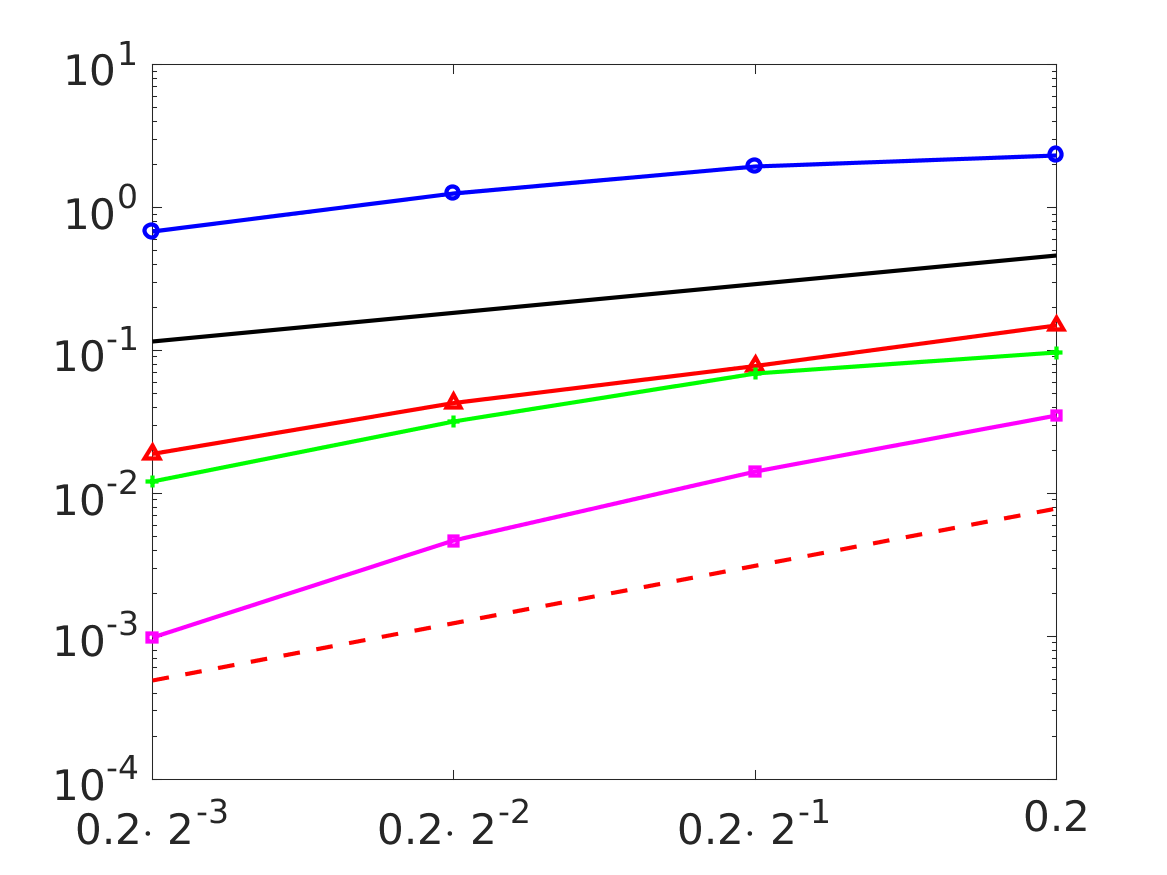}
	\end{subfigure}
	\begin{subfigure}{0.32\textwidth}
		\includegraphics[width=\textwidth]{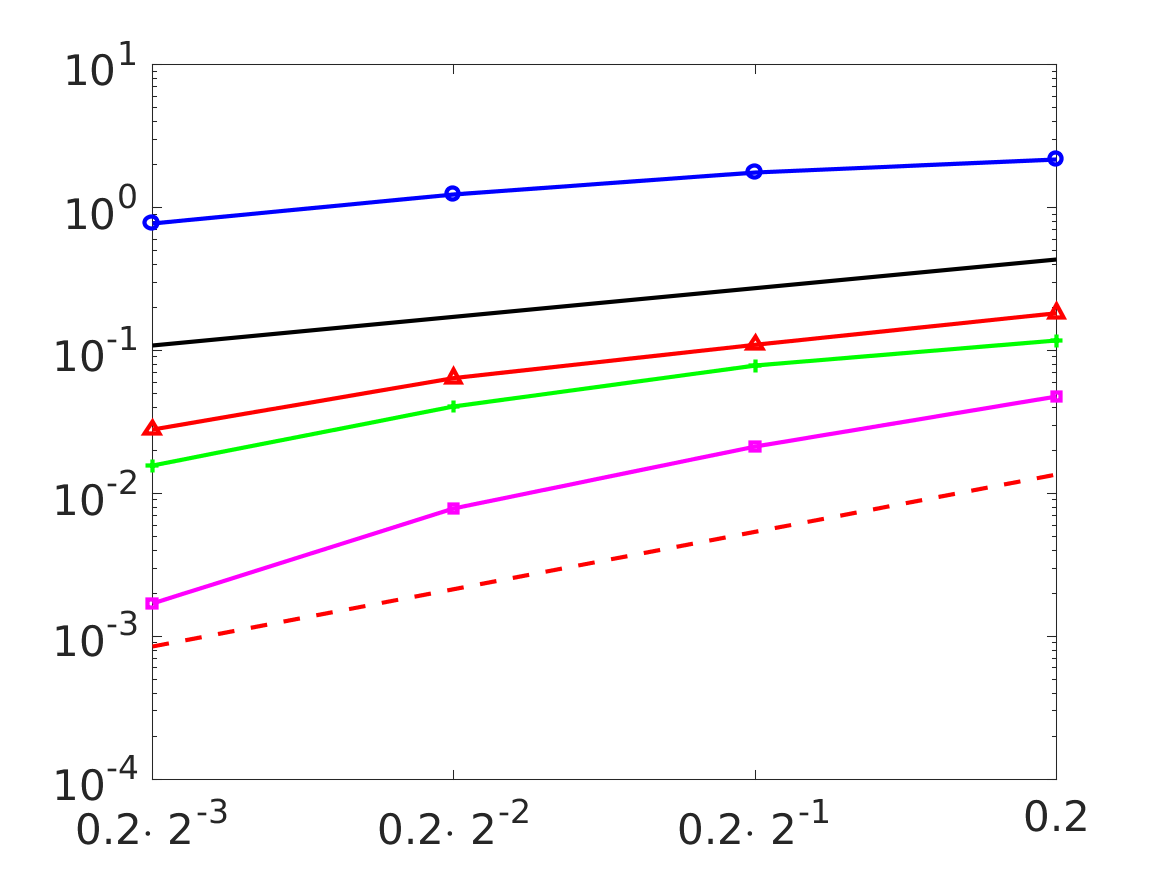}
	\end{subfigure}
	\begin{subfigure}{0.32\textwidth}
		\includegraphics[width=\textwidth]{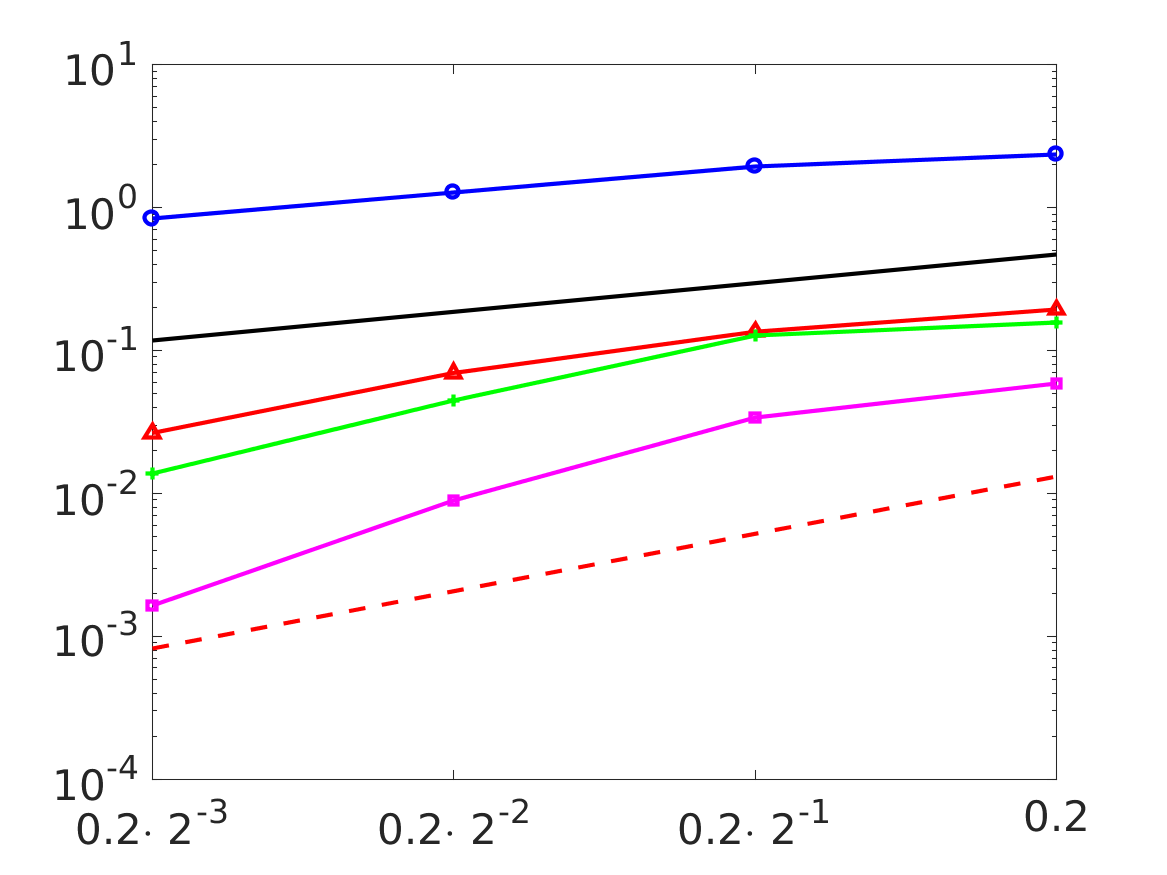}
	\end{subfigure}\\
	\begin{subfigure}{0.8\textwidth}
		\includegraphics[width=\textwidth]{img/legend}
	\end{subfigure}
	\caption{Experiment~2:
	Errors $E_\vr, E_{\vu}, E_{\Grad \vu}$ and relative energy $\RE$ with respect to the pairs $(h,\penl(h)) = \left(0.2\cdot 2^{-m}, 2^{-(m+14)/2} \right)$ (left), $\ \left(0.2\cdot 2^{-m}, 4^{-(m+2)} \right)$ (middle) and $\left(0.2\cdot 2^{-m}, 16^{-m} \right)$ (right), $m = 0,1,2,3$.
	The black solid and red dashed lines without any marker denote the reference slope of $h$ and $h^2$, respectively.
	}
	\label{fig:ex2-3}
\end{figure}

\subsection{Experiment~3: Complex domain - discontinuous extension}
In the last experiment, we consider a more complicated geometry of the fluid domain, i.e.
\begin{equation*}
\Of = \hat{B}_{0.7}\setminus \Ov{B_{0.2}}, \quad \hat{B}_{0.7} := \left\{x ~\bigg|~ |x| < (0.7+\delta) + \delta \cos(8\phi), ~ \phi = \arctan \frac{x_1}{x_2} \right\}.
\end{equation*}
The initial data (including a discontinuous extension) are given by
\begin{equation*}
	(\vr,\vu)(0,x)
	\; = \; \begin{cases}
	(0.01, \, 0, \, 0 ) , & x \in B_{0.2}, \\
	\left(1,  \, \frac{ \big[1-\cos(8\pi (|x|-0.2)) \big]x_2}{|x|} , \,  -\frac{ \big[1-\cos(8\pi (|x|-0.2)) \big] x_1}{|x|}\right) , & x \in   {B}_{0.45}\setminus \Ov{B_{0.2}}, \\
	\left(1,  \, \frac{ \big[-1+\cos(8\pi (|x|-0.2)) \big]x_2}{|x|} , \,  -\frac{ \big[-1+\cos(8\pi (|x|-0.2)) \big] x_1}{|x|}\right) , & x \in   {B}_{0.7}\setminus B_{0.45}, \\
	(1, \, 0 , \, 0) , & x \in \hat{B}_{0.7}\setminus B_{0.7}, \\	
	(0.01, \, 0 , \, 0) , & x \in \mathbb{T}^2 \setminus \hat{B}_{0.7} .
	\end{cases}
\end{equation*}
In the simulation we set $\delta = 0.05$ and the final time to $T=0.1$.
The numerical solutions $\vrh$ and $\vuh$ at time $T=0.1$ for a fixed mesh size $h=0.2\cdot 2^{-4}$ and various penalty parameters $\penl = 4^{-m-2}, m=1,\dots,4,$ are presented in Figure \ref{fig:ex3}.
Figure~\ref{fig:ex3-1} shows the errors with respect to $h$ for a fixed $\penl$. The errors with respect to both parameters $(h,\penl(h))$ are displayed in  Figure~\ref{fig:ex3-3}.
Analogously as above, the numerical results indicate the first order convergence rate for the numerical solutions $\vr,\vu,\Grad \vu$ and the second order convergence rate for the relative energy $\RE$.

 Let us point out that the initial data (including the extension)  in Experiments~2 and 3 belong to the class $L^{\infty}(\tor)$,   which is consistent with Theorem~\ref{THM:ES}. 
Our results obtained in Theorem~\ref{THM:ES} requires $\penl \in (\mathcal{O}(h^3), \mathcal{O}(h))$.
However, the numerical results presented in Figures \ref{fig:ex1-3}, \ref{fig:ex2-3}, \ref{fig:ex3-3} 
indicate that the convergence rates  hold for a more general setting, e.g., $(h,\penl(h)) = (h, \mathcal{O}(h^{1/2})), (h, \mathcal{O}(h^4))$. 

\begin{figure}[htbp]
	\setlength{\abovecaptionskip}{0.cm}
	\setlength{\belowcaptionskip}{-0.cm}
	\centering
	\begin{subfigure}{0.48\textwidth}
		\includegraphics[width=\textwidth]{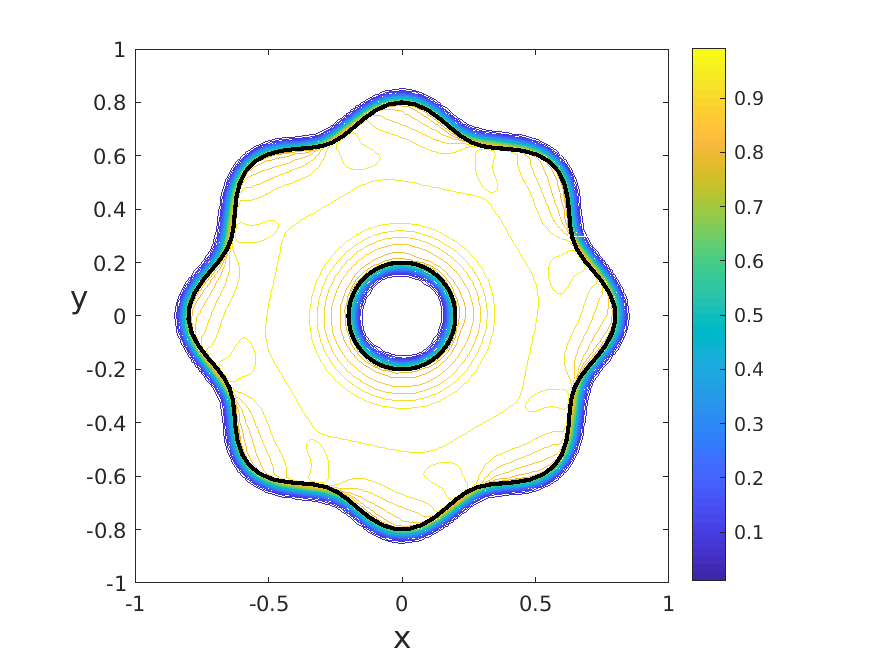}
	\end{subfigure}
	\begin{subfigure}{0.48\textwidth}
			\includegraphics[width=\textwidth]{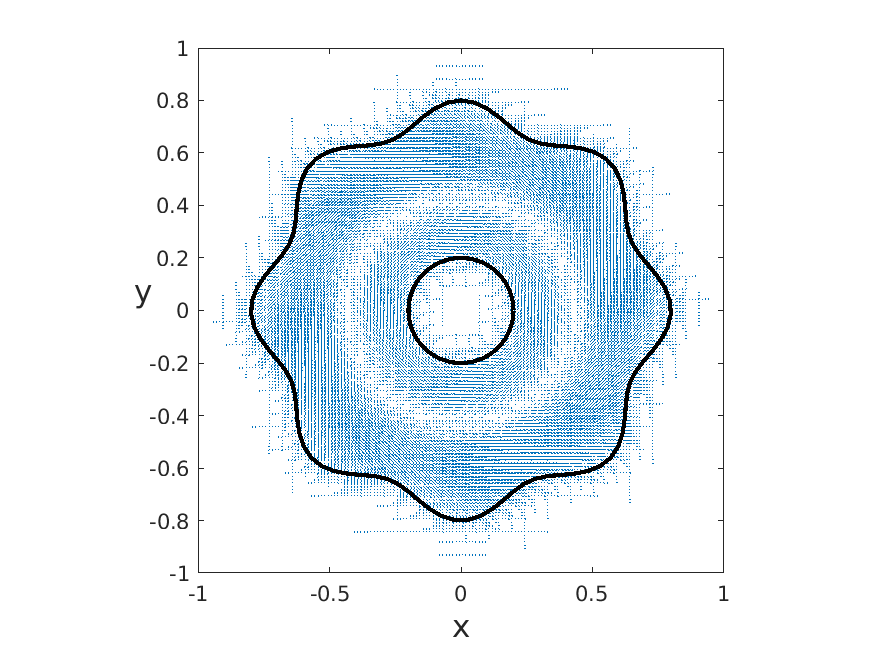}
	\end{subfigure}
\\  \vspace{-1em}
	\begin{subfigure}{0.48\textwidth}
		\includegraphics[width=\textwidth]{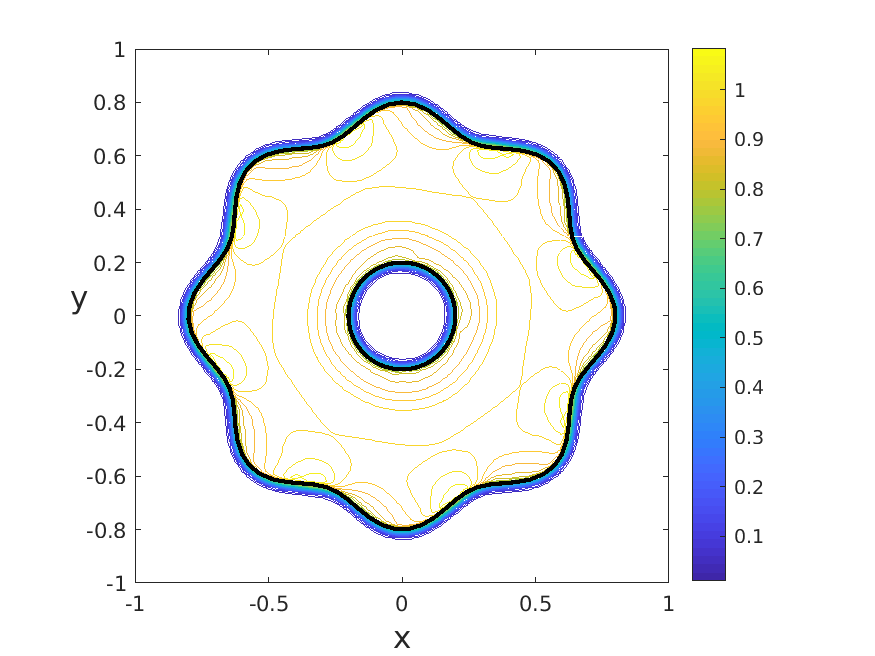}
	\end{subfigure}
	\begin{subfigure}{0.48\textwidth}
		\includegraphics[width=\textwidth]{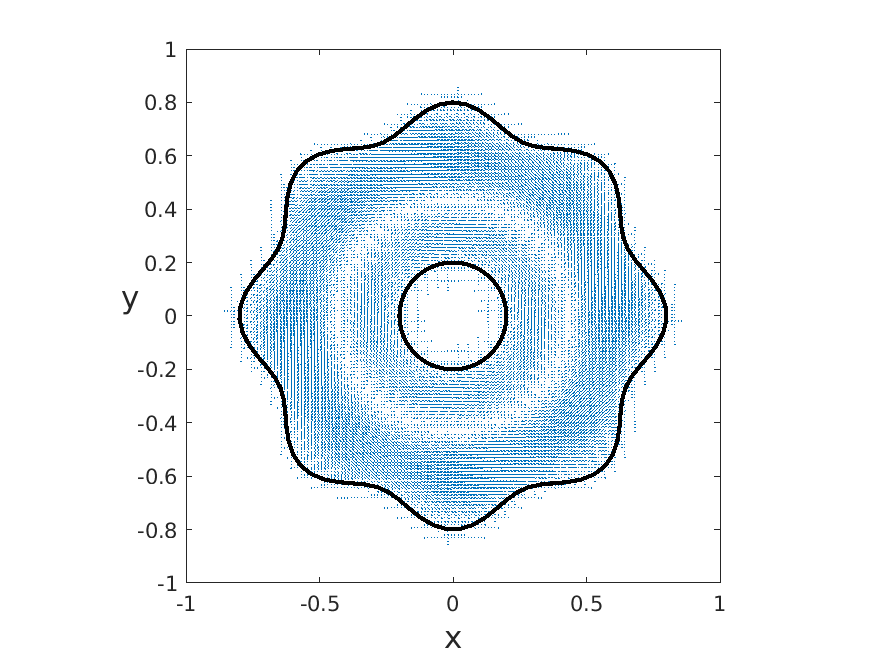}
	\end{subfigure}	
\\  \vspace{-1em}
	\begin{subfigure}{0.48\textwidth}
		\includegraphics[width=\textwidth]{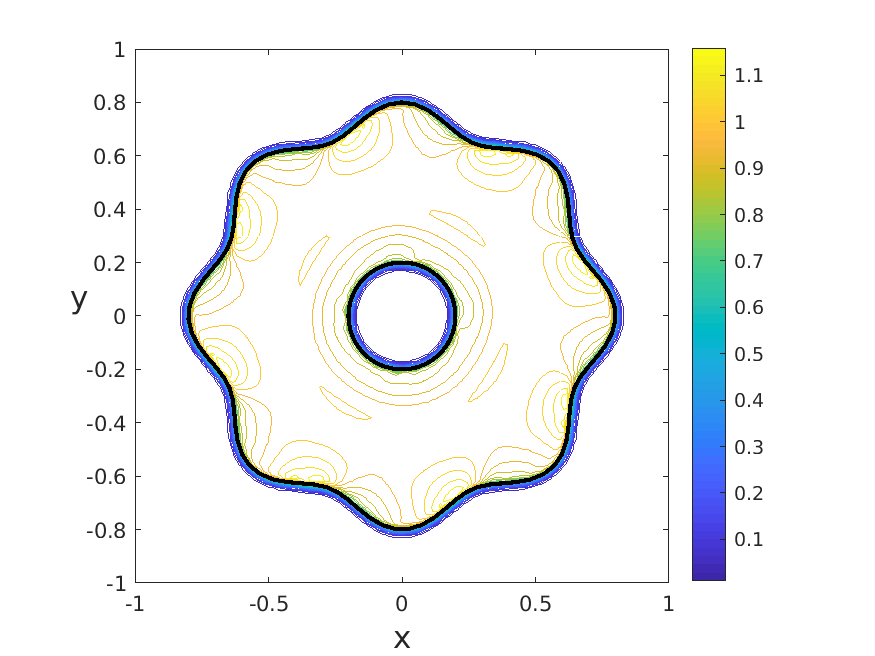}
	\end{subfigure}
	\begin{subfigure}{0.48\textwidth}
		\includegraphics[width=\textwidth]{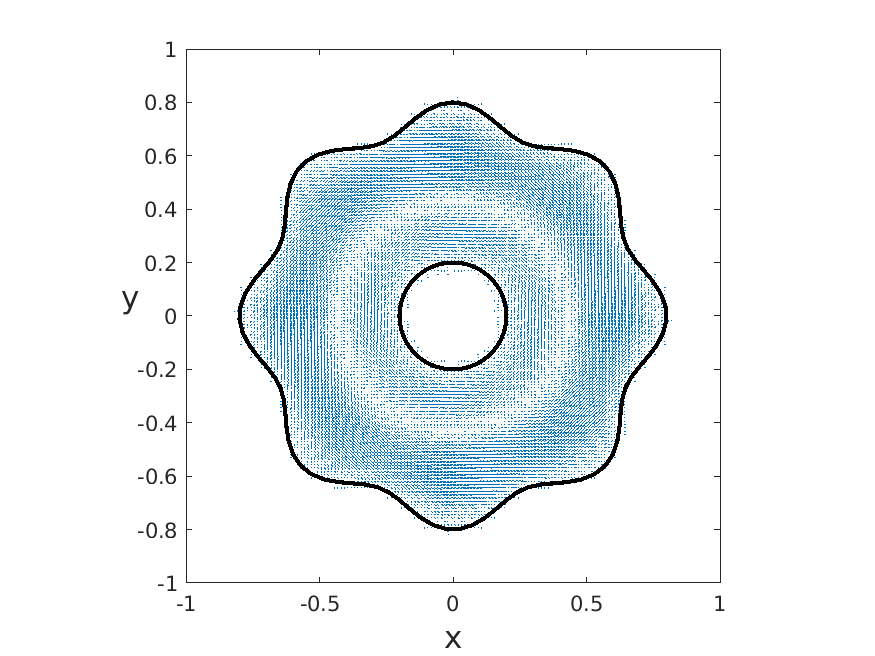}
	\end{subfigure}		
\\  \vspace{-1em}
	\begin{subfigure}{0.48\textwidth}
		\includegraphics[width=\textwidth]{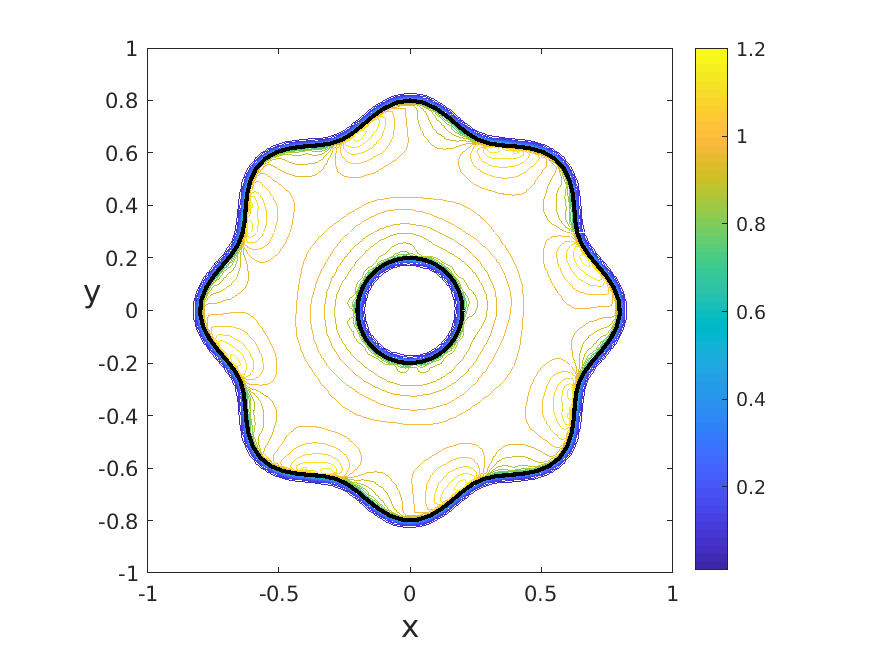}
	\end{subfigure}
	\begin{subfigure}{0.48\textwidth}
		\includegraphics[width=\textwidth]{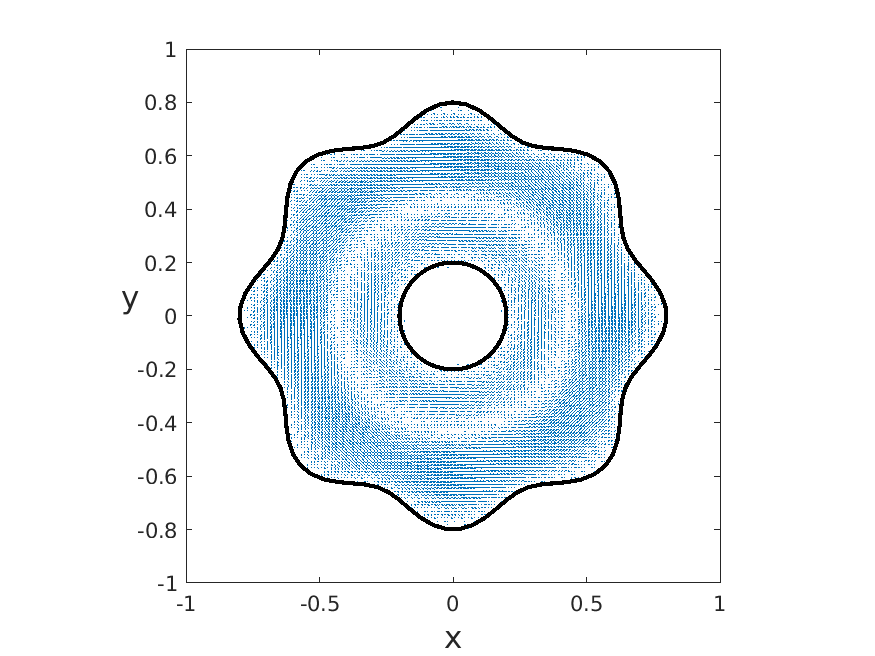}
	\end{subfigure}
	\caption{Experiment~3:
	Numerical solutions $\vrh$  (left) and $\vuh$ (right) obtained with $h = 0.2 \cdot 2^{-4}$ for different $\penl = 4^{-m-2}, m = 1, \dots, 4,$ from top to bottom.}
	\label{fig:ex3}
	 \vspace{-1em}
\end{figure}

\begin{figure}[htbp]
	\setlength{\abovecaptionskip}{0.cm}
	\setlength{\belowcaptionskip}{-0.cm}
	\centering
	 \vspace{-1em}
	\begin{subfigure}{0.48\textwidth}
		\includegraphics[width=\textwidth]{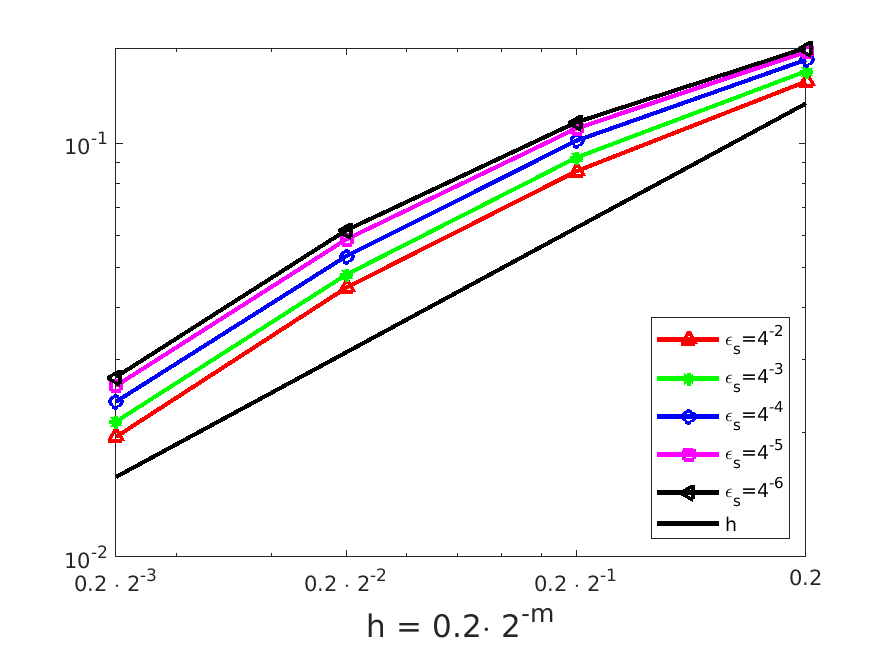}
		\caption{  $E_{\vr}^{\penl} $}
	\end{subfigure}
	\begin{subfigure}{0.48\textwidth}
		\includegraphics[width=\textwidth]{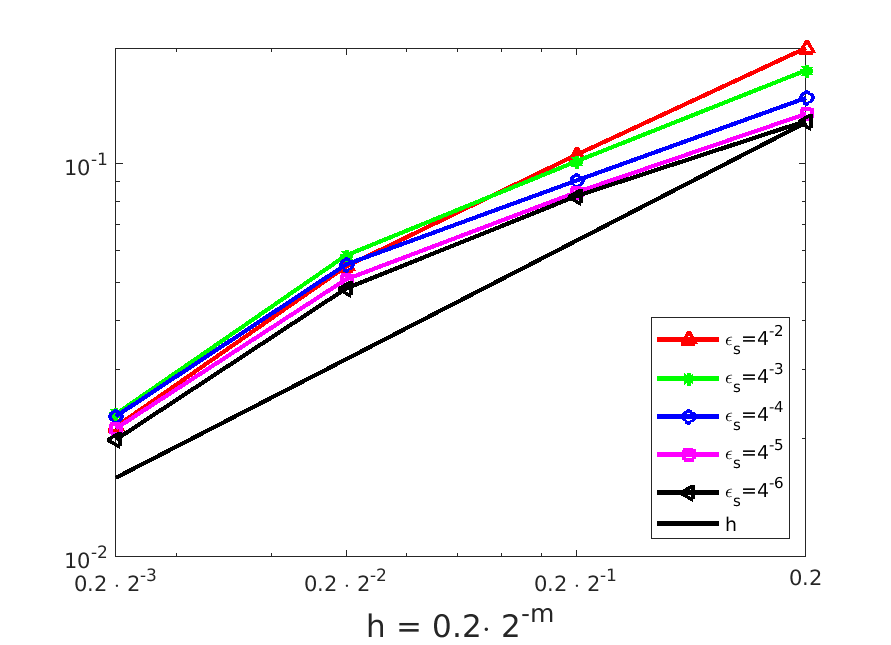}
		\caption{  $E_{\vu}^{\penl}  $}
	\end{subfigure}\\
	\begin{subfigure}{0.48\textwidth}
		\includegraphics[width=\textwidth]{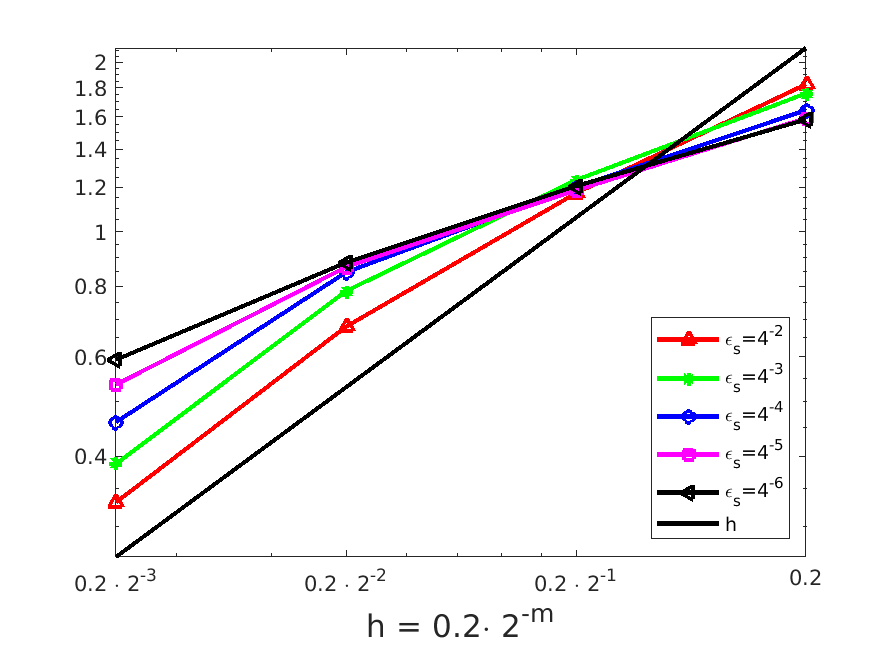}
		\caption{$E_{\Grad \vu}^{\penl}$ }
	\end{subfigure}
	\begin{subfigure}{0.48\textwidth}
		\includegraphics[width=\textwidth]{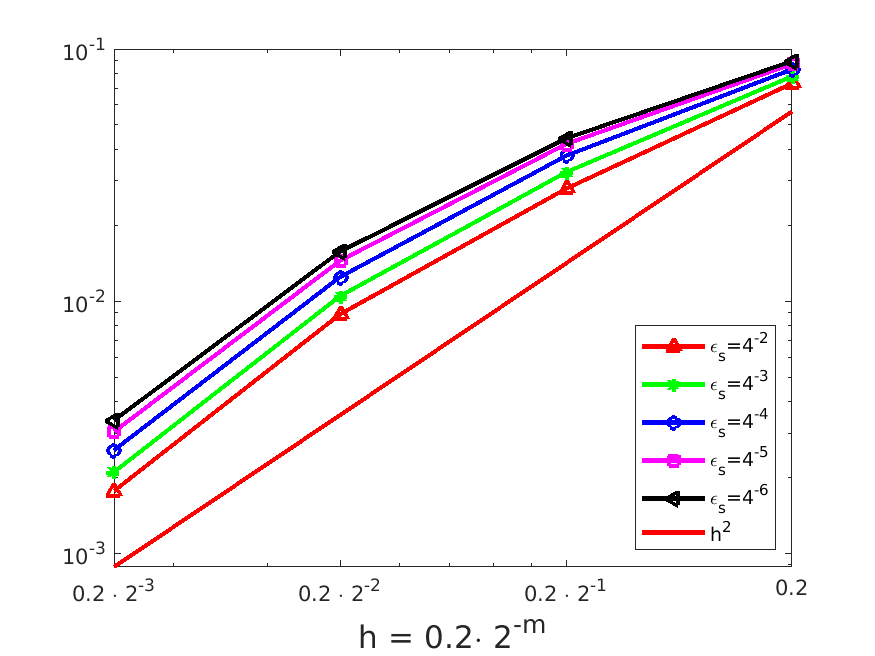}
		\caption{$R_E^{\penl}$ }
	\end{subfigure}
	\caption{Experiment~3:
	   The errors $E_\vr^{\penl}, E_{\vu}^{\penl}, E_{\Grad \vu}^{\penl}, R_E^{\penl}$ with respect to $h$ for different but fixed $\penl$. The black and red solid lines without any marker denote the reference slope of $h$ and $h^2$, respectively.
	}\label{fig:ex3-1}
	 \vspace{-1em}
\end{figure}

\begin{figure}[htbp]
	\setlength{\abovecaptionskip}{0.cm}
	\setlength{\belowcaptionskip}{-0.cm}
	\centering
	\begin{subfigure}{0.32\textwidth}
		\includegraphics[width=\textwidth]{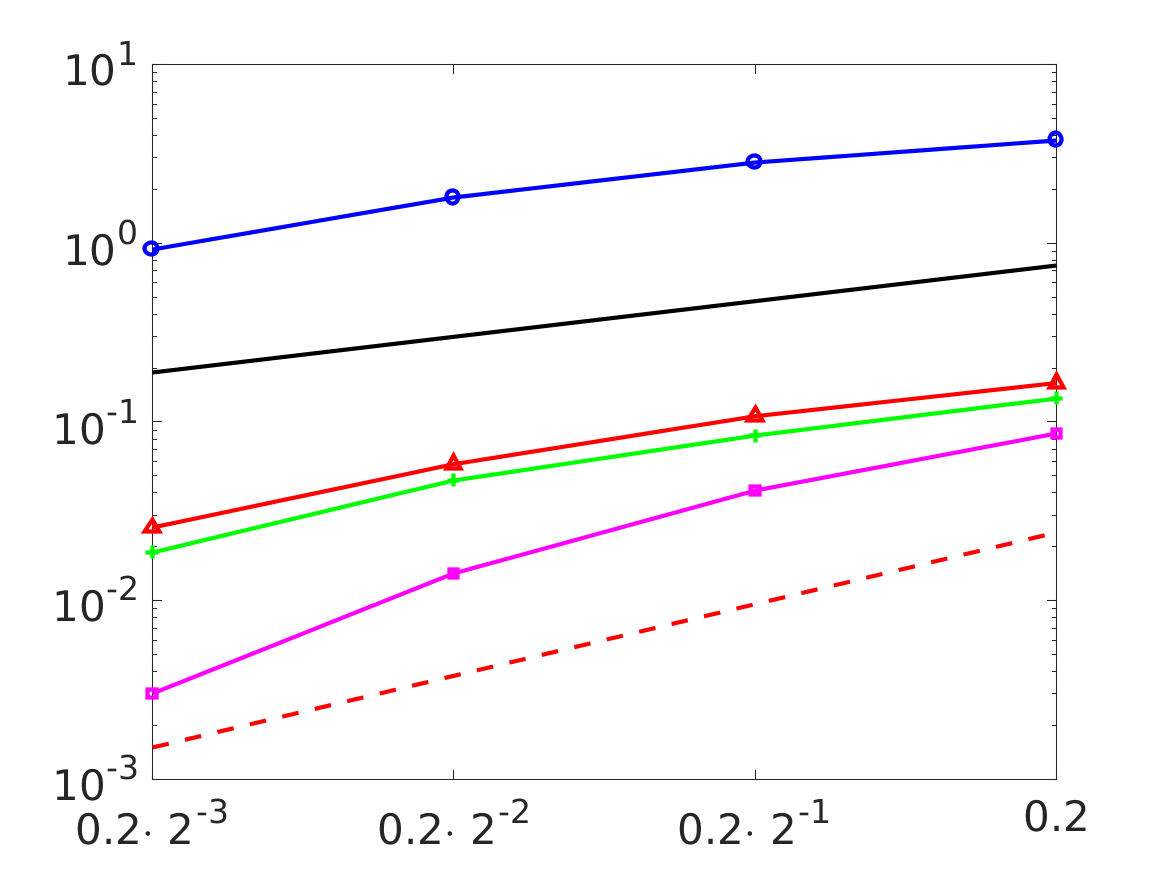}
	\end{subfigure}
	\begin{subfigure}{0.32\textwidth}
		\includegraphics[width=\textwidth]{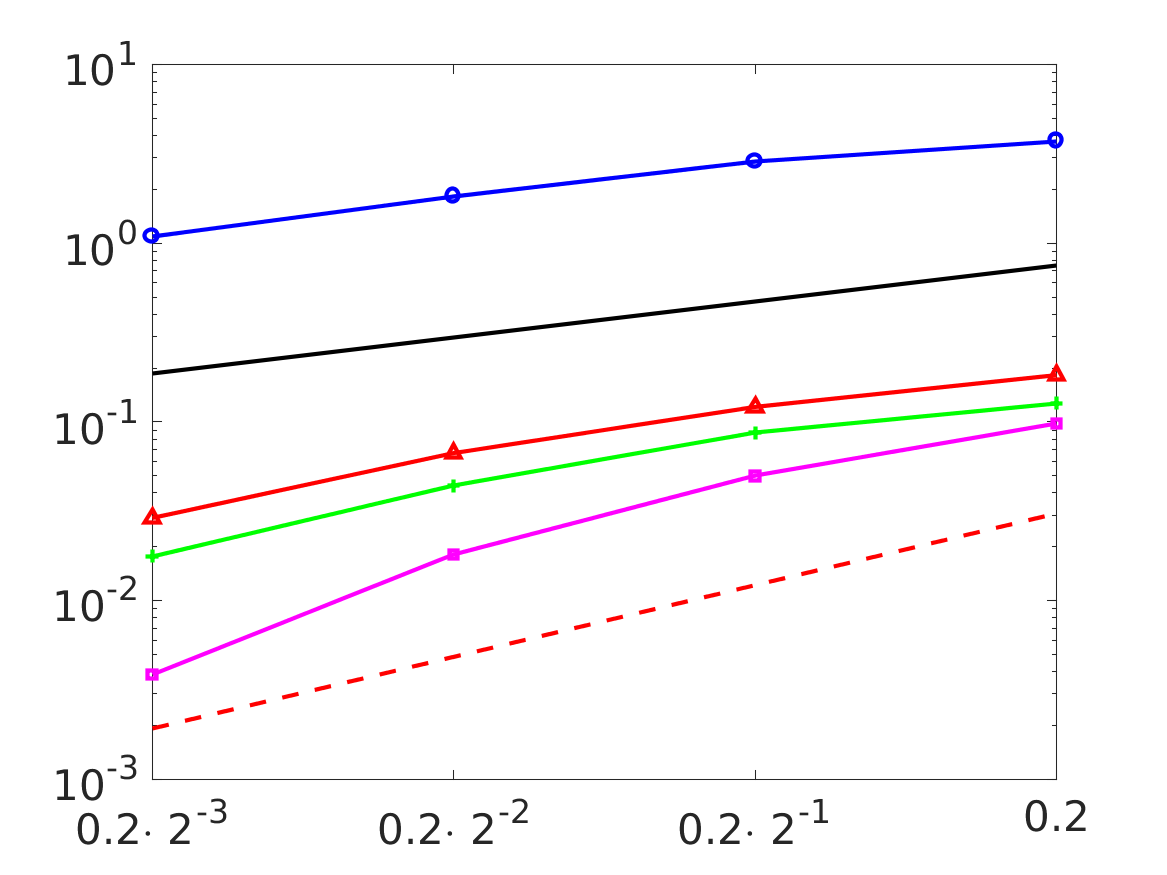}
	\end{subfigure}
	\begin{subfigure}{0.32\textwidth}
		\includegraphics[width=\textwidth]{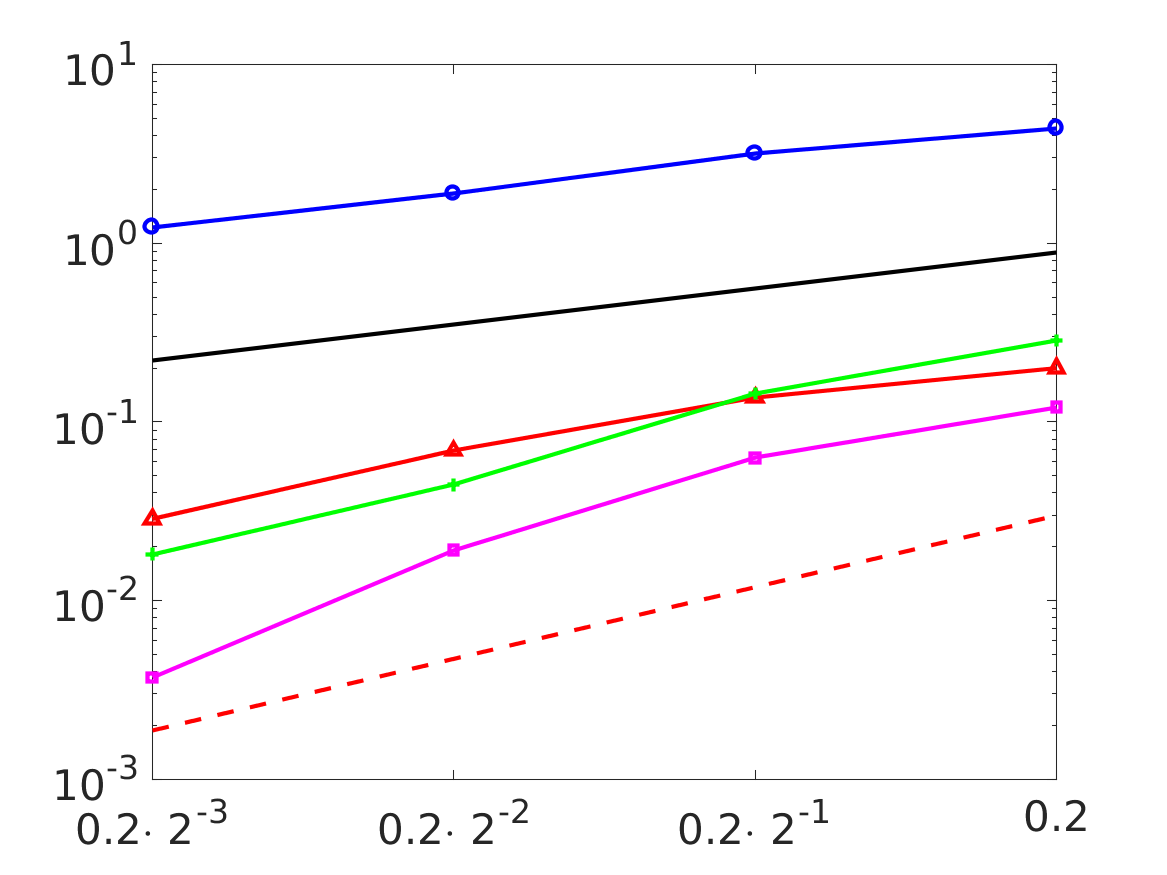}
	\end{subfigure}\\
	\begin{subfigure}{0.8\textwidth}
		\includegraphics[width=\textwidth]{img/legend}
	\end{subfigure}
	\caption{Experiment~3:
	Errors $E_\vr, E_{\vu}, E_{\Grad \vu}$ and relative energy $\RE$ with respect to the pairs $(h,\penl(h)) = \left(0.2\cdot 2^{-m}, 2^{-(m+14)/2} \right)$ (left), $\ \left(0.2\cdot 2^{-m}, 4^{-(m+2)} \right)$ (middle) and $\left(0.2\cdot 2^{-m}, 16^{-m} \right)$ (right), $m = 0,1,2,3$.
	The black solid and red dashed lines without any marker denote the reference slope of $h$ and $h^2$, respectively.
	}
	\label{fig:ex3-3}
	 \vspace{-1em}
\end{figure}


\appendix
\section{Preliminaries}
In this section we list three useful results that are important to derive our theoretical results.
First, we recall the generalized Sobolev-Poincar\'e inequality, see \cite[Theorem 17]{FeLMMiSh}.
\begin{Lemma}[{\cite[Theorem 17]{FeLMMiSh}}]\label{lmSP}
Let $\vrh > 0$ satisfy
\[
0<c_M \leq \intTd{\vrh} \ \mbox{ and } \ \intTd{\vrh^\gamma} \leq c_E,
\]
where $\gamma>1$, $c_M$ and $c_E$ are positive constants.
Then there exists $c=c(c_M, c_E, \gamma)$ independent of $h$ such that
\[
\norm{f_h}_{L^q(\tor)}^2  \leq c\left( \norm{ \Gradd f_h}_{L^2(\tor)}^2  + \intTd{\vrh |f_h|^2 } \right),
\]
where $q = 6$ if $d = 3$, and $q \in[1,\infty)$ if $d = 2$.
\end{Lemma}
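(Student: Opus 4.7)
\bigskip
\noindent\textbf{Proof proposal for Lemma \ref{lmSP}.}

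The plan is a two-stage reduction: first control $\|f_h\|_{L^q}$ by a discrete Sobolev embedding that brings in $\|f_h\|_{L^2}$ alongside $\|\Gradd f_h\|_{L^2}$, and then control $\|f_h\|_{L^2}$ by splitting $f_h$ into its mean and its oscillation and using the density weight to bound the mean. More precisely, I would split
\[
f_h = (f_h - \Ov{f_h}) + \Ov{f_h}, \qquad \Ov{f_h} := \frac{1}{|\tor|}\intTd{f_h},
\]
and apply two standard discrete inequalities for piecewise constant functions on the uniform mesh: the discrete Sobolev embedding $\|f_h\|_{L^q}^2 \aleq \|\Gradd f_h\|_{L^2}^2 + \|f_h\|_{L^2}^2$ (for the stated ranges of $q$), and the discrete Poincar\'e--Wirtinger inequality $\|f_h-\Ov{f_h}\|_{L^2}^2 \aleq \|\Gradd f_h\|_{L^2}^2$. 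These are by now classical for piecewise constant FV spaces and I would quote them rather than reprove them.

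The heart of the argument is to bound the \emph{mean} $\Ov{f_h}$ by the right-hand side of the claimed inequality. To this end I would identify a subset $B_h\subset\tor$ on which $\vrh$ is bounded from below by a positive constant, uniformly in $h$. Take $B_h := \{x\in\tor : \vrh(x)\geq c_0\}$ with $c_0 := c_M/(2|\tor|)$. The two hypotheses combine neatly: by H\"older's inequality,
\[
\int_{B_h^c}\vrh \leq c_0|\tor|=\tfrac{c_M}{2}, \qquad \int_{B_h}\vrh \leq |B_h|^{1/\gamma'}\,\bigl(\intTd{\vrh^\gamma}\bigr)^{1/\gamma} \leq |B_h|^{1/\gamma'} c_E^{1/\gamma},
\]
and adding these while using $\intTd{\vrh}\geq c_M$ yields a lower bound $|B_h|\geq \kappa(c_M,c_E,\gamma)>0$ that does not depend on $h$. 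On $B_h$, Cauchy--Schwarz gives
\[
\Bigl|\int_{B_h} f_h\Bigr| \leq |B_h|^{1/2}\Bigl(\int_{B_h} f_h^2\Bigr)^{1/2} \leq |B_h|^{1/2} c_0^{-1/2}\Bigl(\intTd{\vrh f_h^2}\Bigr)^{1/2},
\]
and writing $\int_{B_h}f_h = |B_h|\Ov{f_h}+\int_{B_h}(f_h-\Ov{f_h})$ together with Cauchy--Schwarz on the oscillation term leads to
\[
|B_h|^{1/2}\,|\Ov{f_h}| \aleq \Bigl(\intTd{\vrh f_h^2}\Bigr)^{1/2} + \|f_h-\Ov{f_h}\|_{L^2}.
\]
Invoking the lower bound on $|B_h|$ and the Poincar\'e--Wirtinger inequality then gives $|\Ov{f_h}|^2 \aleq \intTd{\vrh f_h^2} + \|\Gradd f_h\|_{L^2}^2$ with a constant depending only on $c_M$, $c_E$, $\gamma$.

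Finally I would assemble the pieces: from $\|f_h\|_{L^2}^2 \leq 2\|f_h-\Ov{f_h}\|_{L^2}^2 + 2|\tor|\,|\Ov{f_h}|^2$ and the two estimates above,
\[
\|f_h\|_{L^2}^2 \aleq \|\Gradd f_h\|_{L^2}^2 + \intTd{\vrh f_h^2},
\]
and then feed this into the discrete Sobolev embedding to conclude $\|f_h\|_{L^q}^2 \aleq \|\Gradd f_h\|_{L^2}^2 + \intTd{\vrh f_h^2}$. The main obstacle is the second step: making the lower bound on $|B_h|$ truly $h$-independent. The balance $c_0=c_M/(2|\tor|)$ together with the H\"older estimate involving $c_E^{1/\gamma}$ is what forces the constant $c$ to depend on both $c_M$ and $c_E$ (as well as $\gamma$), and this delicate interplay between the lower $L^1$ bound and the upper $L^\gamma$ bound on $\vrh$ is precisely what the hypothesis $\gamma>1$ (which makes $\gamma'$ finite) is used for.
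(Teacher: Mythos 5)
Your proof is correct. Bear in mind that the paper does not prove this lemma at all: it quotes it as Theorem 17 of the monograph \cite{FeLMMiSh}, so the comparison is with the cited literature rather than with an in-paper argument, and your proposal is a valid, essentially self-contained substitute. The decisive step --- the $h$-independent lower bound on $|B_h|$ for $B_h=\{\vrh\geq c_M/(2|\tor|)\}$ --- is carried out correctly: $\int_{B_h^c}\vrh\leq c_M/2$, H\"older gives $\int_{B_h}\vrh\leq c_E^{1/\gamma}|B_h|^{(\gamma-1)/\gamma}$, and combining with $\intTd{\vrh}\geq c_M$ yields $|B_h|\geq \big(c_M/(2c_E^{1/\gamma})\big)^{\gamma/(\gamma-1)}$, a bound depending only on $(c_M,c_E,\gamma)$; this is indeed exactly where $\gamma>1$ enters. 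The remaining steps (Cauchy--Schwarz on $B_h$ using the pointwise bound $\vrh\geq c_0$ there, the mean/oscillation splitting, and the discrete Poincar\'e--Wirtinger and Sobolev embeddings for piecewise constant functions with the jump-based gradient $\Gradd$) are sound, and quoting those two embeddings is legitimate: on a uniform structured mesh of the fixed torus their constants are $h$-uniform, and they are themselves available in \cite{FeLMMiSh}. As to the route, weighted inequalities of this type are usually established in Feireisl's school by a compactness/contradiction argument (normalize a violating sequence, use Rellich-type compactness to extract a limit, identify it as a constant, and rule it out via the weak $L^\gamma$ limit of the densities); at the discrete level that route needs a discrete compactness theorem and gives no explicit constant. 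Your constructive argument sidesteps compactness entirely, is uniform in $h$ by design, and makes the dependence of $c$ on $(c_M,c_E,\gamma)$ explicit. One cosmetic caveat: for $d=2$ the constant also depends on the exponent $q$ (inherited from the discrete Sobolev embedding, blowing up as $q\to\infty$), which is consistent with the implicit convention in the lemma as stated.
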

Next we recall a slightly reformulated lemma from \cite[Lemma B.4]{FLS_IEE}. 
\begin{Lemma}[{\cite[Lemma B.4]{FLS_IEE}}] \label{difru}
Let $\gamma >1$ and $\vrh>0, \vrh \in L^\gamma(\tor),  \vuh \in L^2(\tor)$.  
Let  $ 0<\Un{\vr} < \tvr < \Ov{\vr},  \tvu \in L^2(\tor)$. 
Then for any constant $\delta \in(0,1)$ it holds that
\begin{align*}
\intTauOf{\abs{ (\vrh -\tvr) (\vuh -\tvu)}}    \aleq  \delta \norm{ \vuh - \tvu}_{L^2(\tor)}^2  + \RE(\vrh, \vuh| \tvr, \tvu).
\end{align*}
\end{Lemma}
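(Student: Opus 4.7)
The plan is to prove this weighted Young-type inequality by a standard essential/residual decomposition of $\tor$ based on how close $\vrh$ is to $\tvr$, and by exploiting the strict convexity of the pressure potential $\Hc$ (which is encoded in $\bbE(\vrh|\tvr) = \Hc(\vrh) - \Hc'(\tvr)(\vrh-\tvr) - \Hc(\tvr)$). Define
\[
\mathcal{O}_{\rm ess} := \left\{x \in \tor \;\Big|\; \tfrac{1}{2} \tvr(x) \leq \vrh(x) \leq 2\tvr(x)\right\}, \qquad \mathcal{O}_{\rm res} := \tor \setminus \mathcal{O}_{\rm ess}.
\]
Since $\tvr \in [\Un{\vr}, \Ov{\vr}]$, a Taylor expansion of $\Hc$ around $\tvr$ combined with the coercivity of $\Hc''$ on compact subsets of $(0,\infty)$ yields the standard pointwise lower bounds $\bbE(\vrh|\tvr) \geq c_1 (\vrh - \tvr)^2$ on $\mathcal{O}_{\rm ess}$ and $\bbE(\vrh|\tvr) \geq c_2 (1 + \vrh^\gamma)$ on $\mathcal{O}_{\rm res}$, with $c_1,c_2 > 0$ depending only on $\gamma, \Un{\vr}, \Ov{\vr}$. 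I would establish these two pointwise estimates first.

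On $\mathcal{O}_{\rm ess}$, since $\vrh \geq \Un{\vr}/2 > 0$ is bounded away from zero, I would apply the density-weighted Cauchy--Schwarz inequality
\[
|(\vrh - \tvr)(\vuh - \tvu)| = \frac{|\vrh - \tvr|}{\sqrt{\vrh}} \cdot \sqrt{\vrh}\,|\vuh - \tvu| \leq \frac{(\vrh - \tvr)^2}{2\vrh} + \frac{\vrh}{2}|\vuh - \tvu|^2.
\]
The first term is controlled by $C(\vrh-\tvr)^2 \leq C\,\bbE(\vrh|\tvr)$, and the second term contributes directly to the kinetic part of $\RE(\vrh,\vuh|\tvr,\tvu)$. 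On $\mathcal{O}_{\rm res}$, I would split further. On the subset where $\vrh < \tvr/2$, the quantity $|\vrh - \tvr|$ is bounded by $\Ov{\vr}$ and un-weighted Young with parameter $\delta$ gives $|(\vrh-\tvr)(\vuh-\tvu)| \leq \tfrac{\Ov{\vr}^2}{4\delta} + \delta|\vuh-\tvu|^2$; the constant term is absorbed by $\tfrac{C}{\delta}\bbE$ since $\bbE \gtrsim 1$ on this set. On the subset where $\vrh > 2\tvr$, we have $\vrh - \tvr \leq \vrh$, hence
\[
|(\vrh - \tvr)(\vuh - \tvu)| \leq \vrh|\vuh - \tvu| \leq \tfrac{1}{2}\vrh + \tfrac{1}{2}\vrh|\vuh-\tvu|^2,
\]
and since $\gamma > 1$ implies $\vrh \leq 1 + \vrh^\gamma \lesssim \bbE$, this contribution is bounded by a multiple of $\RE$. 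Summing the three estimates over the corresponding regions, and using $\Of \subset \tor$ together with positivity of the integrand to enlarge the integration domain on the right-hand side, yields the stated inequality.

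The main subtlety is the handling of the small-density subset $\{\vrh < \tvr/2\} \subset \mathcal{O}_{\rm res}$: one cannot use a density-weighted Young inequality there because $1/\vrh$ may blow up, so un-weighted Young is forced, and this is precisely what introduces the $\delta|\vuh - \tvu|^2$ term with the trade-off $\delta^{-1}$ in the implicit constant. Everything else is a book-keeping exercise combining the pointwise convexity bounds with the appropriate form of Young's inequality on each piece.
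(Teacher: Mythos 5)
The paper itself contains no proof of this statement: Lemma~\ref{difru} is recalled, in slightly reformulated form, from \cite[Lemma B.4]{FLS_IEE}, so there is nothing in-paper to compare against line by line. Your argument is correct, and it is precisely the standard essential/residual relative-energy argument on which the cited result rests. The two pointwise coercivity bounds you invoke, $\bbE(\vrh|\tvr)\ge c_1(\vrh-\tvr)^2$ on the essential set and $\bbE(\vrh|\tvr)\ge c_2(1+\vrh^\gamma)$ on the residual set, hold uniformly for $\tvr\in[\Un{\vr},\Ov{\vr}]$ by convexity of $\Hc$ and are classical; given them, your three-way splitting (density-weighted Young on the essential set, unweighted Young with parameter $\delta$ on $\{\vrh<\tvr/2\}$, and the bound $\vrh\le 1+\vrh^\gamma\aleq\bbE$ on $\{\vrh>2\tvr\}$) closes the estimate completely, and you correctly identify the small-density region as the only place where the $\delta\norm{\vuh-\tvu}_{L^2(\tor)}^2$ term is unavoidable. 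Two minor remarks: (i) your final bound carries a factor $C(1+1/\delta)$ in front of $\RE(\vrh,\vuh|\tvr,\tvu)$, which is admissible under the paper's $\aleq$ convention, since the hidden constant need only be independent of $h$ and $\penl$, and $\delta$ is fixed before any limit passage; (ii) the left-hand side of the statement is a space-time integral $\intTauOf{\cdot}$, so your pointwise-in-time estimate should be integrated over $(0,\tau)$ at the very end --- a triviality, but worth noting since the paper's formulation writes the right-hand side at fixed time.
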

Further, we recall \cite[Lemma C.1]{FLS_IEE} for the following estimates, which are used to derive $L^p$-error estimates, cf. Theorem \ref{THM:ES}.
\begin{Lemma}[{\cite[Lemma C.1]{FLS_IEE}}]\label{lmRE}
 Let $\gamma>1$, 
$\tvr \in [\Un{\vr}, \Ov{\vr}], \; \abs{\tvu} \leq \Ov{u}, \; \Un{\vr},\, \Ov{\vr}, \, \Ov{u} > 0.$
Moreover, let $0<\vrh<\Ov{\vr}$ uniformly for $h\to0$. 
 Then the following estimates hold
 \begin{align*}
  \norm{\vrh-\tvr }^{2}_{L^{2}(\tor)}  
+  \norm{\vrh \vuh - \tvr\tvu}^2_{L^{2}(\tor;\R^d)} 
 \aleq \RE(\vrh, \vuh | \tvr, \tvu)
 \end{align*}
 and 
  \begin{align*}
  \norm{\vrh-\tvr }^{\gamma}_{L^{\gamma}(\tor)}  
 \aleq \RE(\vrh, \vuh | \tvr, \tvu) \quad \mbox{ if } \gamma>2.
 \end{align*}
%
\end{Lemma}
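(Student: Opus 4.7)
The statement is purely pointwise/algebraic in character: one must dominate $L^p$-norms of $\vrh-\tvr$ and $\vrh\vuh - \tvr\tvu$ by the relative energy functional $\RE$, whose essential ingredients are the kinetic density $\tfrac12 \vrh |\vuh-\tvu|^2$ and the relative pressure $\bbE(\vrh|\tvr)=\Hc(\vrh)-\Hc'(\tvr)(\vrh-\tvr)-\Hc(\tvr)$. The whole proof reduces to a sharp pointwise lower bound on $\bbE$ together with an algebraic decomposition of the momentum difference.

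The plan is to first establish, as the only analytic input, the pointwise inequality
\begin{equation*}
\bbE(\vrh|\tvr) \;\geq\; c_0\, |\vrh-\tvr|^2 \qquad \text{for all } (\vrh,\tvr)\in[0,\Ov{\vr}]\times[\Un{\vr},\Ov{\vr}],
\end{equation*}
with $c_0=c_0(\gamma,\Un{\vr},\Ov{\vr})>0$. Since $\Hc(\vr)=\frac{a}{\gamma-1}\vr^\gamma$ is strictly convex on $[0,\infty)$ for $\gamma>1$, the map $\vrh\mapsto \bbE(\vrh|\tvr)$ vanishes together with its first derivative at $\vrh=\tvr$, so by Taylor the quotient $g(\vrh,\tvr):=\bbE(\vrh|\tvr)/(\vrh-\tvr)^2$ extends continuously to the diagonal with value $\tfrac12\Hc''(\tvr)=\tfrac{a\gamma}{2}\tvr^{\gamma-2}>0$. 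The extended function is continuous and strictly positive on the compact rectangle $[0,\Ov{\vr}]\times[\Un{\vr},\Ov{\vr}]$, hence bounded below by a positive constant $c_0$.

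Next, integration yields immediately $\|\vrh-\tvr\|_{L^2(\tor)}^2 \leq c_0^{-1}\int_\tor \bbE(\vrh|\tvr)\dx \leq c_0^{-1}\RE(\vrh,\vuh|\tvr,\tvu)$, which is the density contribution to the first claim. For the momentum contribution, the key is the algebraic decomposition
\begin{equation*}
\vrh \vuh - \tvr \tvu \;=\; \vrh (\vuh - \tvu) + (\vrh - \tvr)\,\tvu.
\end{equation*}
The first piece is controlled by $\int \vrh^2|\vuh-\tvu|^2\dx \leq \Ov{\vr}\int \vrh|\vuh-\tvu|^2\dx \leq 2\Ov{\vr}\,\RE$, using the uniform upper bound $\vrh<\Ov{\vr}$ from the assumption. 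The second piece is bounded by $\Ov{u}^{\,2}\|\vrh-\tvr\|_{L^2}^2$ and reduces to the density estimate already proved.

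Finally, for the $L^\gamma$-bound under $\gamma>2$, the hypothesis $0<\vrh<\Ov{\vr}$ together with $\tvr\leq\Ov{\vr}$ gives the pointwise bound $|\vrh-\tvr|\leq 2\Ov{\vr}$, hence $|\vrh-\tvr|^\gamma \leq (2\Ov{\vr})^{\gamma-2}|\vrh-\tvr|^2$. Integrating and invoking the $L^2$-estimate concludes. The main (and essentially the only) obstacle is Step~1: controlling $\bbE$ from below by $|\vrh-\tvr|^2$ in the delicate region where $\vrh$ is far from $\tvr$, in particular near $\vrh=0$, where $\Hc''(\vrh)$ either degenerates ($\gamma>2$) or blows up ($\gamma<2$); the compactness/continuity argument handles both cases uniformly, and it is precisely the uniform a priori bound $\vrh<\Ov{\vr}$ (assumption \eqref{ass}) that makes this compactification possible.
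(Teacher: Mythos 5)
Your proposal is correct. Note that the paper itself gives no proof of Lemma \ref{lmRE}: it is recalled verbatim from \cite[Lemma C.1]{FLS_IEE}, so there is no internal argument to compare against; your write-up is effectively a self-contained substitute for that citation, and it follows the standard route for this kind of statement. The three ingredients are all sound: (i) the pointwise bound $\bbE(\vrh|\tvr) \geq c_0 |\vrh-\tvr|^2$ on $[0,\Ov{\vr}]\times[\Un{\vr},\Ov{\vr}]$ via continuity and positivity of the quotient $g(\vrh,\tvr)=\bbE(\vrh|\tvr)/(\vrh-\tvr)^2$ on a compact set, where the diagonal extension $g=\tfrac12\Hc''(\tvr)$ is legitimate precisely because $\tvr\geq\Un{\vr}>0$; (ii) the splitting $\vrh\vuh-\tvr\tvu=\vrh(\vuh-\tvu)+(\vrh-\tvr)\tvu$ with $\vrh^2\leq\Ov{\vr}\,\vrh$ and $|\tvu|\leq\Ov{u}$; (iii) the interpolation $|\vrh-\tvr|^\gamma\leq(2\Ov{\vr})^{\gamma-2}|\vrh-\tvr|^2$, valid exactly when $\gamma>2$. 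You also correctly identify the one genuine subtlety: a naive Taylor bound $\bbE=\tfrac12\Hc''(\xi)(\vrh-\tvr)^2$ does not suffice, since for $\gamma>2$ the intermediate point $\xi\in\co{\vrh}{\tvr}$ may be close to vacuum where $\Hc''$ degenerates; your compactness argument (equivalently, a case split between $\vrh\leq\Un{\vr}/2$ and $\vrh\geq\Un{\vr}/2$) handles this uniformly, and it is exactly the uniform bound \eqref{ass} that makes the rectangle compact. The constants produced depend only on $a,\gamma,\Un{\vr},\Ov{\vr},\Ov{u}$, hence are independent of $h$ and $\penl$, as required by the $\aleq$ convention of the paper.
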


\section{Proof of consistency}\label{sec_cs}
In this section our aim is to estimate the consistency errors $e_{\vr}$ and $e_{\vm}$  stated in Lemma \ref{thm_CS}. 
To begin, let us reformulate  $e_{\vr}$ and $e_{\vm}$.
\begin{Lemma}\label{thm_eeD}
Let  $(\vrh,\vuh)$ be a solution of the FV scheme \eqref{VFV} with $(\TS,h,\penl) \in (0,1)^3$ and $\viso>-1$. 
Then it holds 
\begin{align*}
& e_{\vr}(\phi) = E_t(\vrh, \phi) + E_F(\vrh, \phi),
 \\
& e_{\vm}(\bfphi) = E_t(\vmh, \bfphi) + E_F(\vmh, \bfphi) + E_{\Grad \vu}(\bfphi) - E_{p}(\bfphi) + E_{\penl}(\bfphi) 
\end{align*}
with 
\begin{equation}\label{eeD-1}
\begin{aligned}
 E_t(r_h, \phi) &=  \left[\intTd{ r_h \phi}\right]_{t=0}^{\tau}
-  \intn \intTd{ D_t r_h(t) \Pim \phi(t)  } \dt - \int_0^{\tau} \intTd{ r_h(t) \pd_t \phi (t) } \dt, 
\\
 E_F(r_h, \phi) &=  \intn  \intfaces{ F_h^{up}[ r_h, \vuh]  \jump{\Pim \phi} }\dt - \intTauTd{ r_h \vuh \cdot \Grad \phi }   
\\
& = - \sum_{i=1}^4 E_i(r_h,\phi) + \int_{\tau}^{t_{n+1}} \intTd{ r_h \vuh \cdot \Grad \phi }\dt,
\\
E_{\Grad \vu}(\bfphi) &= 
 \intTauTdB{ \mu \Gradd \vuh : (\Grad \bfphi -\Gradd( \Pim \bfphi)) + \nu \Divh \vuh ( \Div \bfphi  -  \Divh( \Pim \bfphi) )}
\\ 
& \quad - \int_{\tau}^{t_{n+1}} \intTdB{ \mu \Gradd \vuh : \Grad \bfphi + \nu \Divh \vuh  \Div \bfphi}\dt,
\\
E_{p}(\bfphi)  &=   \intTauTd{  p_h (\Div  \bfphi  -   \Divh(\Pim \bfphi) ) }  - \int_{\tau}^{t_{n+1}} \intTdB{p_h \Div  \bfphi }\dt,
\\ 
E_{\penl}(\bfphi) & = \frac{1}{\penl} \intTauOs{ \vuh \cdot \bfphi }  - \frac{1}{\penl} \intn \intOsh{ \vuh \cdot \Pim \bfphi } \dt
\\
&=  -\frac{1}{\penl} \int_\tau^{t_{n+1}} \int_{\Os} \vuh \cdot \bfphi  \dxdt  -  \frac{1}{\penl} \intn \int_{\Osh \setminus \Os} \vuh \cdot \bfphi  \dxdt,
\end{aligned}
\end{equation}
and
\begin{equation*}
\begin{aligned}
E_1(r_h,\phi) & = \frac12 \intn  \intfaces{ \abs{\avs{\vuh} \cdot \vc{n} } \jump{r_h} \jump{\Pim \phi} }\dt , \quad  E_2(r_h,\phi) = \frac14 \intn  \intfaces{ \jump{\vuh} \cdot \vc{n}  \jump{r_h} \jump{\Pim \phi} }\dt ,
\br
E_3(r_h,\phi) & = h^{\viso}  \intn  \intfaces{  \jump{r_h} \jump{\Pim \phi} }\dt, 
\quad 
E_4(r_h,\phi) = \intn \intTd{ r_h \vuh \cdot \left(\Grad \phi  - \Gradh (\Pim \phi)\right)} \dt.
\end{aligned}    
\end{equation*}
\end{Lemma}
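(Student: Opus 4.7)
The proof is an exact algebraic rearrangement of the scheme, not an estimate. The plan is to fix $\tau \in [t_n, t_{n+1})$, substitute $\phi_h = \Pim \phi$ in \eqref{VFV_D} and $\bfvarphi_h = \Pim \bfvarphi$ in \eqref{VFV_M}, integrate in time over $[0, t_{n+1}]$, and then subtract the resulting discrete identities from the definitions \eqref{CS1-new}--\eqref{CS2-new}; the residuals are to be collected term by term into $E_t, E_F, E_{\Grad \vu}, E_p, E_\penl$.

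For the time contribution $\intn \intTd D_t \vrh \Pim \phi$, I would subtract $[\intTd \vrh \phi]_{t=0}^{t=\tau} - \intTauTd \vrh \pd_t \phi$ using discrete summation by parts, exploiting that $\vrh$ is piecewise constant in time; the projection correction $\phi - \Pim \phi$ and the time-interval mismatch $[\tau, t_{n+1}]$ collect exactly into $E_t(\vrh, \phi)$. For the convective flux $\intn \intfaces F_h^\viso(\vrh, \vuh) \jump{\Pim \phi}$, three identities are essential: the upwind--average split
\begin{equation*}
r_h^{\rm up} \avs{\vuh}\cdot \vn = \avs{r_h}\avs{\vuh}\cdot \vn - \tfrac{1}{2}\left|\avs{\vuh}\cdot \vn \right| \jump{r_h},
\end{equation*}
the product--average correction $\avs{r_h \vuh}\cdot \vn = \avs{r_h}\avs{\vuh}\cdot \vn + \tfrac{1}{4}\jump{r_h}\jump{\vuh}\cdot \vn$, and the discrete integration by parts $\intfaces \avs{r_h\vuh}\cdot \vn \jump{\Pim \phi} = -\intTd r_h \vuh \cdot \Gradh \Pim \phi$ (which follows from face/cell accounting and $|K| = h|\sigma|$). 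Tracking each contribution produces $E_1$ from the upwind term, $E_2$ from the product-average correction, $E_3$ from the artificial viscosity $-h^\viso \jump{r_h}$, $E_4$ from the gap $\Grad \phi - \Gradh \Pim \phi$, together with the remainder $\int_\tau^{t_{n+1}} \intTd r_h \vuh \cdot \Grad \phi$ appearing in the second line of $E_F$.

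For the momentum equation the same procedure with $r_h = \vmh = \vrh \vuh$ yields $E_t(\vmh, \bfvarphi)$ and $E_F(\vmh, \bfvarphi)$. The pressure and viscous terms in \eqref{VFV_M} are matched against their continuous counterparts in \eqref{CS2-new}: the differences $-\intTd p_h (\Divh \Pim \bfvarphi - \Div \bfvarphi)$ and $\mu \Gradd \vuh : (\Grad \bfvarphi - \Gradd \Pim \bfvarphi) + \nu \Divh \vuh (\Div \bfvarphi - \Divh \Pim \bfvarphi)$, combined with their time-interval residuals, reproduce $-E_p$ and $E_{\Grad \vu}$. For the penalty I would use that $\vuh$ is piecewise constant to conclude $\int_K \vuh \cdot \Pim \bfvarphi = \int_K \vuh \cdot \bfvarphi$ on every cell $K$, so $\frac{1}{\penl}\intn \intOsh \vuh \cdot \Pim \bfvarphi = \frac{1}{\penl}\intn \intOsh \vuh \cdot \bfvarphi$; splitting $\Osh = \Os \cup (\Osh \setminus \Os)$ and $[0, t_{n+1}] = [0, \tau] \cup [\tau, t_{n+1}]$ then yields the compact form of $E_\penl$ displayed in \eqref{eeD-1}.

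The main obstacle is bookkeeping rather than analysis: the factors $\tfrac{1}{2}, \tfrac{1}{4}$ from the upwind and averaging identities, the signs arising from discrete integration by parts, and the multiple time-interval and subdomain splittings must be tracked carefully so that the final decomposition matches \eqref{eeD-1} verbatim. No analytical estimates enter here; they appear only when the individual $E_i$ are bounded in the proof of \eqref{CS1-1-new}--\eqref{CS2-1-new}.
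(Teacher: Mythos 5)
Your proposal follows exactly the paper's route: insert $\Pim\phi$, $\Pim\bfvarphi$ as test functions in \eqref{VFV}, add and subtract the resulting discrete terms in the definitions \eqref{CS1-new}--\eqref{CS2-new}, and sort the residuals into $E_t, E_F, E_{\Grad\vu}, E_p, E_\penl$; the only difference is that the paper outsources the splitting of the upwind flux into $E_1,\dots,E_4$ to an external reference (Lemma 2.5 of \cite{FLMS_FVNS}), whereas you re-derive it from the upwind--average and product--average identities, both of which you state correctly. One sign needs fixing: with the paper's convention $\jump{v}=v^{\rm out}-v^{\rm in}$ and $\vn$ the associated normal, the discrete summation by parts reads
\begin{equation*}
\intfaces{ \avs{r_h\vuh}\cdot\vn\,\jump{\Pim\phi} } \;=\; +\intTd{ r_h\vuh\cdot\Gradh(\Pim\phi) },
\end{equation*}
not with a minus sign as you wrote; the plus sign is precisely what lets this term telescope against $-\int_0^\tau\intTd{r_h\vuh\cdot\Grad\phi}$ to produce $-E_4$ and the tail $\int_\tau^{t_{n+1}}\intTd{r_h\vuh\cdot\Grad\phi}$, so with your sign the claimed form of $E_F$ would not come out. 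Everything else, including the observation that $\vuh\in Q_h^d$ lets you replace $\Pim\bfvarphi$ by $\bfvarphi$ cellwise on $\Osh$ before splitting off $\Osh\setminus\Os$ in $E_\penl$, is sound.
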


\begin{proof} 
With the definition of $e_{\vr}$, see \eqref{CS1-new}, we have
\begin{align*}
 e_\vr(\tau, \TS, h, \phi) =& \left[ \intTd{ \vrh \phi  } \right]_{t=0}^{t=\tau} -
\int_0^\tau \intTdB{   \vrh \partial_t \phi + \vrh   \vuh \cdot \Grad \phi } \dt
\br
=& \left[ \intTd{ \vrh \phi  } \right]_{t=0}^{t=\tau} -
\intn \intTd{ D_t r_h(t) \Pim \phi(t)  } \dt - \int_0^{\tau} \intTd{ r_h(t) \pd_t \phi (t) } \dt
\br
&+ \intn \intTd{ D_t r_h(t) \Pim \phi(t)  } \dt - \int_0^{\tau} \intTd{  \vrh   \vuh \cdot \Grad \phi } \dt
\br
=& E_t(\vrh,\phi)+ \intn  \intfaces{ F_h^{up}[ r_h, \vuh]  \jump{\Pim \phi} }\dt - \int_0^{\tau} \intTd{  \vrh   \vuh \cdot \Grad \phi } \dt 
\br
=& E_t(\vrh,\phi)+ E_F(\vrh,\phi) = E_t(\vrh,\phi)- \sum_{i=1}^4 E_i(r_h,\phi) + \int_{\tau}^{t_{n+1}} \intTd{ \vrh \vuh \cdot \Grad \phi }\dt,
\end{align*}
where we have used \cite[Lemma 2.5]{FLMS_FVNS} for the last equality. Analogous analysis applies to $e_{\vm}$ and completes the proof.
\end{proof}
The rest is to estimate $E_t, E_F, E_{\Grad \vu}, E_p, E_{\penl}$. For simplicity, hereafter we  shorten $L^p(0,T;L^q(\tor))$ as $L^pL^q$.

\subsection{Negative estimates of density and momentum}\label{ap_lms-1}

Firstly we introduce two negative density estimates. For completeness we present the proof of Lemma~\ref{RGS}.  Lemma~\ref{lem_ne}
states new improved results.

\begin{Lemma}[{\cite[Lemma 11.4]{FeLMMiSh}}]\label{RGS}
Let  $(\vrh,\vuh)$ be a solution of the FV scheme \eqref{VFV} with $(\TS,h,\penl) \in (0,1)^3$ and $\viso>-1$.
Then  for $\gamma \in (1,2]$ it holds that
\begin{align}
\label{RGS1}
&
\norm{\vrh}_{L^\gamma L^{p\gamma/2}}^\gamma= \norm{\vrh^{\gamma/2}}_{L^2L^p}^2 =
\int_0^T \norm{\vrh^{\gamma/2}}_{L^{p}}^{2} \dt
\aleq  h^{-\viso-1},
\\&\label{RGS2}
\norm{\vrh}_{L^s L^{\infty}}^s =   \int_0^T \norm{\vrh}_{L^{\infty}}^s \dt \aleq h^{-s(2d+p(\viso+1))/(p\gamma) }
\end{align}
with $s\in(0,\gamma]$, $p=6$ if $d=3$ and $p\geq 1$ if $d=2$.
\end{Lemma}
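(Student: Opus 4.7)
The plan is to bound $\vrh^{\gamma/2}$ in $L^2(0,T;L^p(\tor))$ via a discrete Sobolev embedding, and then deduce $L^\infty$ control through an inverse estimate. Identifying exponents gives $\|\vrh^{\gamma/2}\|_{L^2 L^p}^2 = \|\vrh\|_{L^\gamma L^{p\gamma/2}}^\gamma$, so \eqref{RGS1} is equivalent to $\int_0^T \|\vrh^{\gamma/2}\|_{L^p}^2 \dt \aleq h^{-\viso-1}$. First I would apply the classical discrete Sobolev embedding for piecewise constants on a uniform mesh, $\|f_h\|_{L^p}^2 \aleq \|\Gradd f_h\|_{L^2}^2 + \|f_h\|_{L^2}^2$, with $p$ as in the statement, to $f_h = \vrh^{\gamma/2}$. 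The floor $\|\vrh^{\gamma/2}\|_{L^2}^2 = \|\vrh\|_{L^\gamma}^\gamma$ is already uniformly bounded by \eqref{ap1}. Rewriting the gradient term via the dual mesh as $\|\Gradd \vrh^{\gamma/2}\|_{L^2(\tor)}^2 = h^{-1} \intfaces \jump{\vrh^{\gamma/2}}^2 \ds$ then reduces everything to comparing $\jump{\vrh^{\gamma/2}}^2$ with the density jump quantity $\Hc''(\vr_{h,\dagger}) \jump{\vrh}^2$ already controlled by the numerical dissipation in \eqref{ap3}.

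The main obstacle is therefore the pointwise face inequality
\[
\jump{\vrh^{\gamma/2}}^2 \leq C_\gamma \, \Hc''(\vr_{h,\dagger}) \jump{\vrh}^2,
\]
which is not immediate from the mean value theorem because the intermediate value arising from differentiating $r \mapsto r^{\gamma/2}$ need not coincide with the specific $\vr_{h,\dagger}$ in the Taylor expansion of $\Hc$, and for $\gamma < 2$ the ratio of the two can in principle blow up. The plan is: for $a > b > 0$ and $\gamma \in (1,2]$, apply Cauchy-Schwarz to $a^{\gamma/2} - b^{\gamma/2} = \tfrac{\gamma}{2}\int_b^a r^{\gamma/2-1}\,dr$ to obtain $(a^{\gamma/2}-b^{\gamma/2})^2 \leq \tfrac{\gamma^2}{4}(a-b)\int_b^a r^{\gamma-2}\,dr$, and use the integral form of the Taylor remainder
\[
\Hc(a) - \Hc(b) - \Hc'(b)(a-b) = a\gamma \int_b^a (a-s) s^{\gamma-2}\,ds = \tfrac{a\gamma}{2}\, \vr_{h,\dagger}^{\gamma-2}(a-b)^2,
\]
together with the auxiliary claim $\int_b^a (a-s)s^{\gamma-2}\,ds \geq \tfrac12 (a-b)\int_b^a s^{\gamma-2}\,ds$ valid for $\gamma \in (1,2]$. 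After the substitution $s = b + t(a-b)$ this reduces to showing $\int_0^1 (\tfrac12 - t)(b+t(a-b))^{\gamma-2}\,dt \geq 0$, which follows by pairing $t$ with $1-t$ and using that $t \mapsto (b+t(a-b))^{\gamma-2}$ is monotone on $[0,1]$ when $\gamma \leq 2$. This is where the restriction $\gamma \leq 2$ is essential.

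Given the face inequality, summing over $\sigma$ and integrating in time gives $\int_0^T \|\Gradd \vrh^{\gamma/2}\|_{L^2}^2 \dt \aleq h^{-1} \cdot h^{-\viso}$, and combined with the bounded $L^2$ floor this yields \eqref{RGS1} (absorbing $T$ into the constant since $h \in (0,1)$ and $\viso > -1$ makes $h^{-(\viso+1)} \geq 1$). For \eqref{RGS2} I would combine \eqref{RGS1} with the standard inverse estimate $\|g_h\|_{L^\infty(\tor)} \leq h^{-d/p} \|g_h\|_{L^p(\tor)}$ for piecewise constants on a uniform mesh, applied to $g_h = \vrh^{\gamma/2}$, which rewrites as $\|\vrh\|_{L^\infty}^s \leq h^{-2sd/(p\gamma)} \|\vrh^{\gamma/2}\|_{L^p}^{2s/\gamma}$; since $s \leq \gamma$ implies $2s/\gamma \leq 2$, Jensen's inequality in time against \eqref{RGS1} produces
\[
\int_0^T \|\vrh\|_{L^\infty}^s \dt \aleq h^{-2sd/(p\gamma)} \cdot h^{-s(\viso+1)/\gamma} = h^{-s(2d + p(\viso+1))/(p\gamma)},
\]
which is exactly \eqref{RGS2}.
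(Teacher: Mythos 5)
Your proof is correct and follows essentially the same route as the paper's: the discrete Sobolev--Poincar\'e embedding applied to $\vrh^{\gamma/2}$, control of $\Gradd \vrh^{\gamma/2}$ by the dissipation bound \eqref{ap3}, and an inverse estimate in space combined with H\"older/Jensen in time for \eqref{RGS2}. The only difference is that you supply a full justification of the pointwise face inequality $\jump{\vrh^{\gamma/2}}^2 \aleq \Hc''(\vr_{h,\dagger})\jump{\vrh}^2$, which the paper asserts without proof; your Cauchy--Schwarz plus integral-Taylor-remainder argument for it is sound and correctly uses $\gamma\le 2$.
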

\begin{proof}
First, it is easy to check the equivalence of the norms in \eqref{RGS1}, i.e.,
\begin{align*}
&\norm{\vrh}_{L^\gamma L^{p\gamma/2}}^\gamma
 = \int_0^T \left( \intTd{ \vrh^{p\gamma/2} } \right)^{ \frac{2}{p\gamma} \times \gamma} \dt
 = \int_0^T  \left(\intTd{ (\vrh^{\gamma/2})^p } \right)^{\frac1p \times 2} \dt
  =\int_0^T \norm{\vrh^{\gamma/2}}_{L^{p}}^{2} \dt
 = \norm{\vrh^{\gamma/2}}_{L^2L^p}^2.
\end{align*}
Further, 
we deduce from the estimate \eqref{ap3} that
\begin{equation*}
\norm{\Gradd \vrh^{\gamma/2}}_{L^2L^2}^2 \aleq  h^{-1}\int_0^T \intfaces{  \Hc''( \vr_{h,\dagger} ) \jump{ \vrh  }^2 } \dt \aleq  h^{-\viso-1} \quad \mbox{for} ~ \gamma \in (1,2].
\end{equation*}
Next, applying the Sobolev-Poincar\'{e} inequality, cf. Lemma \ref{lmSP} we have from the density estimate \eqref{ap1} that
\begin{align*}
  \int_0^T \norm{\vrh^{\gamma/2}}_{L^{p}}^{2} \ \dt
 \aleq \int_0^T \left(  \norm{\Gradd \vrh^{\gamma/2}}_{L^2}^2 + \norm{\vrh^{\gamma/2}}_{L^2}^2 \right)  \ \dt
=  \norm{\Gradd \vrh^{\gamma/2}}_{L^2L^2}^{2} + \norm{\vrh}_{L^{\gamma}L^\gamma} ^{\gamma}
\aleq  h^{-\viso-1},
\end{align*}
where $p>1$ in the case of $d=2$ and $p=6$ for the case of $d=3$.
Together with the inverse estimates we obtain
\begin{align*}
&  \int_0^T \norm{\vrh}_{L^{\infty}}^s \ \dt = \int_0^T \norm{\vrh^{\gamma/2}}_{L^{\infty}}^{2s/\gamma} \ \dt \aleq \int_0^T \left( h^{-d/p} \norm{\vrh^{\gamma/2}}_{L^p}\right) ^{2s/\gamma} \ \dt
=h^{-2sd/(p\gamma)} \int_0^T \left(  \norm{\vrh^{\gamma/2}}_{L^p}^2\right) ^{s/\gamma} \ \dt
\br
& \aleq h^{-2sd/(p\gamma)} \int_0^T \left(  \norm{\Gradd \vrh^{\gamma/2}}_{L^2}^{2s/\gamma}  + \norm{\vrh^{\gamma/2}}_{L^2}^{2s/\gamma}  \right) \ \dt
= h^{-2sd/(p\gamma)} \left(  \norm{\Gradd \vrh^{\gamma/2}}_{L^{2s/\gamma}L^2}^{2s/\gamma} + \norm{\vrh}_{L^sL^\gamma} ^s  \right)
\br
&\aleq h^{-2sd/(p\gamma)} \left(  \norm{\Gradd \vrh^{\gamma/2}}_{L^{2}L^2}^{2s/\gamma} + \norm{\vrh}_{L^{\infty}L^\gamma} ^s  \right)
\aleq h^{-2sd/(p\gamma)} \left(  (h^{-\viso-1})^{s/\gamma} + 1  \right)
\aleq h^{-s(2d+p(\viso+1))/(p\gamma) } ,
\end{align*}
which completes the proof.
\end{proof}

Further, we derive the following negative estimates of density and momentum.
\begin{Lemma}[Negative estimates of density and momentum] \label{lem_ne}
Let $(\vrh, \vuh)$ be a solution of the FV scheme \eqref{VFV}  with $(h,\penl) \in (0,1)^2,\, \viso > -1$ and $\gamma>1$.
Then the following hold:
\begin{equation}\label{beta-D}
\norm{\vrh}_{L^2((0,T)\times\tor)} \aleq h^{\beta_D},
\quad
\begin{aligned}
\left. \begin{array}{l}
\beta_D =
\begin{cases}
 \min\limits_{_{p \in \left[ 1, \infty \right)}}\left\{ \frac{p(\viso+1)+4}{2p}, 1 \right\} \cdot \frac{\gamma-2}{\gamma} \  & \mbox{if } d= 2, \gamma \in(1,2), \\
\min\left\{ \frac{\viso+2}{3} , 1 \right\} \cdot \frac{3(\gamma-2)}{2\gamma} & \mbox{if } d= 3, \gamma \in(1,2), \\
0 &\mbox{if } \gamma \geq 2,
\end{cases}
\end{array}
\right.
\end{aligned}
\end{equation}
\begin{equation}\label{beta-R}
\norm{\vrh}_{L^2(0,T;L^{6/5}(\tor))} \aleq h^{\betane},
\quad
\begin{aligned}
\left. \begin{array}{l}
\betane =
\begin{cases}
 \min\limits_{_{p \in \left[ \frac{12}{5\gamma}, \infty \right)}}\left\{   \frac{(1+\viso)p}{2(p-2)} , 1\right\} \cdot \frac{5\gamma-6}{3\gamma}  \  & \mbox{if } d = 2, \gamma \in \left(1, \frac65\right), \\
 \min\left\{   \frac{1+\viso}{2} , 1\right\} \cdot \frac{5\gamma-6}{2\gamma}  & \mbox{if } d = 3, \gamma \in \left(1, \frac65\right), \\
0 &\mbox{if } \gamma \geq \frac65,
\end{cases}
\end{array}
\right.
\end{aligned}
\end{equation}
\begin{equation}\label{beta-M}
\norm{\vrh \vuh}_{L^2((0,T)\times\tor)} \aleq h^{\beta_M},
\quad
\begin{aligned}
\beta_M =
\begin{cases}
\max\limits_{p \in \left[ \frac{2\gamma}{\gamma-1}, \infty \right)}\left\{ - \frac{p(\viso+1)+4}{2p\gamma}, \frac{p(\gamma-2)-2\gamma}{p\gamma}\right\} \
& \mbox{ if } d = 2, \gamma \leq 2, \\
0
& \mbox{ if } d = 2, \gamma > 2, \\
\max \left\{ - \frac{\viso+2}{2\gamma}, \frac{\gamma-3}{\gamma},-\frac{3}{2\gamma} \right\}
& \mbox{ if } d = 3, \gamma \leq 2, \\
 \frac{\gamma-3}{\gamma}
&  \mbox{ if } d = 3, \gamma \in (2,3),\\
0
& \mbox{ if } d = 3, \gamma \geq 3.
\end{cases}
\end{aligned}
\end{equation}
\end{Lemma}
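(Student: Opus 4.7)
The plan is to bound each of the three left-hand sides by a Hölder/interpolation product that isolates a high power of $\vrh$ in $L^\infty_x$, and then to estimate the resulting time integral of $\norm{\vrh}_{L^\infty(\tor)}$ in two competing ways: either via the negative bound \eqref{RGS2} of Lemma~\ref{RGS} (which costs $h^{-s(2d+p(\viso+1))/(p\gamma)}$ per unit $s$ of $\vrh$), or via the elementary inverse estimate $\norm{\vrh}_{L^\infty(\tor)}\aleq h^{-d/\gamma}\norm{\vrh}_{L^\gamma(\tor)}$ combined with the uniform $L^\infty L^\gamma$-bound from \eqref{ap1}. The free parameter $p$ in Lemma~\ref{RGS} (ranging over $[1,\infty)$ for $d=2$ and fixed to $p=6$ for $d=3$), together with the inverse-estimate option, is precisely what produces the $\min$/$\max$ structure in \eqref{beta-D}--\eqref{beta-M}.

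For the density bound \eqref{beta-D}, the regime $\gamma\geq 2$ is immediate from $L^\gamma\hookrightarrow L^2$ together with $\vrh\in L^\infty L^\gamma$. For $\gamma<2$ I would factor $\vrh^2=\vrh^{2-\gamma}\vrh^\gamma$, which yields $\norm{\vrh}_{L^2((0,T)\times\tor)}^2\leq\norm{\vrh}_{L^\infty L^\gamma}^\gamma\int_0^T\norm{\vrh}_{L^\infty}^{2-\gamma}\dt$; since $2-\gamma\in(0,\gamma]$, the remaining integral is controlled either by \eqref{RGS2} (producing the arm with coefficient $(p(\viso+1)+2d)/(2p)$, equivalently $(\viso+2)/3$ after the rescaling by $3/(2\gamma)\cdot(\gamma-2)$ in $d=3$) or by the inverse estimate (producing the ``$1$''-arm after the same rescaling). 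Minimising over $p$ in the $d=2$ case gives exactly \eqref{beta-D}. The estimate \eqref{beta-R} is handled analogously, either by the factorisation $\vrh^{6/5}=\vrh^{6/5-\gamma}\vrh^\gamma$ (leading to the inverse-estimate arm), or by interpolating $\norm{\vrh}_{L^{6/5}}$ between $\norm{\vrh}_{L^\gamma}$ and $\norm{\vrh}_{L^{p\gamma/2}}$ and applying Hölder in time combined with \eqref{RGS1} (leading to the $(\viso+1)/2$-type arm); the lower cut-off $p\geq 12/(5\gamma)$ is precisely the condition $6/5\in[\gamma,p\gamma/2]$ for the interpolation to be admissible.

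For the momentum bound \eqref{beta-M}, three competing Hölder splittings yield the three exponents under the $\max$. The first writes $\vrh\vuh=\sqrt{\vrh}\cdot\sqrt{\vrh}\vuh$, bounds the kinetic factor $\sqrt{\vrh}\vuh\in L^\infty L^2$ via \eqref{ap1}, and estimates $\int_0^T\norm{\vrh}_{L^\infty}\dt$ by \eqref{RGS2} with $s=1$, producing the exponent $-(p(\viso+1)+2d)/(2p\gamma)$. The second applies in-space Hölder $\norm{\vrh\vuh}_{L^2}\leq\norm{\vrh}_{L^{2p/(p-2)}}\norm{\vuh}_{L^p}$ with $\vuh\in L^2 L^p$ from \eqref{ap1}, and controls $\norm{\vrh}_{L^\infty L^{2p/(p-2)}}$ by interpolating between $L^\gamma$ and $L^\infty$ and then invoking the inverse estimate; the resulting rate is $(p(\gamma-2)-2\gamma)/(p\gamma)$. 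The third, available only in $d=3$, is the variant of the first splitting in which the integral $\int_0^T\norm{\vrh}_{L^\infty}\dt$ is instead estimated by the crude inverse bound $\norm{\vrh}_{L^\infty}\aleq h^{-3/\gamma}$, producing $-3/(2\gamma)$. The easy regimes $\gamma>2$ in $d=2$ and $\gamma\geq 3$ in $d=3$ are immediate from $L^\gamma\hookrightarrow L^q$ for the appropriate $q$, giving $\beta_M=0$.

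The principal technical obstacle is the bookkeeping of admissibility: for every combination $(d,\gamma,\viso)$ one must check that the Hölder-in-time exponent is $\geq 1$, that the interpolation target sits in the correct Lebesgue interval, and that the chosen $p$ is compatible with both \eqref{ap1} and Lemma~\ref{RGS} (this last requirement is the source of the constraint $p\geq 2\gamma/(\gamma-1)$ in \eqref{beta-M}). The delicate case is $d=3$ with $\gamma$ close to $1$, where none of the three strategies alone is sharp and the full combination under the $\max$ is needed; this is precisely what forces the additional restriction $\viso<2(\gamma-1)$ appearing in Theorem~\ref{THM1}, under which $\beta_M>-1$ is guaranteed.
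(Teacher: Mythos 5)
Your proposal is correct and follows essentially the same route as the paper: combine the negative estimates of Lemma~\ref{RGS} (coming from the density dissipation and the Sobolev--Poincar\'e inequality) with crude inverse estimates from the uniform bounds \eqref{ap1}, take the better of the competing exponents, and optimize over the free parameter $p$ in $d=2$; your H\"older splittings for the momentum term (e.g.\ $\vrh\vuh=\sqrt{\vrh}\cdot\sqrt{\vrh}\vuh$ with \eqref{RGS2} at $s=1$ versus the paper's pairing of $\vrh|\vuh|^2$ with $\vrh$ via \eqref{RGS1}) are cosmetically different but yield identical exponents. The only slight imprecision is your attribution of the range $p\geq 2\gamma/(\gamma-1)$ in \eqref{beta-M} to an admissibility constraint, whereas in the paper it is simply the region where the arm $\frac{p(\gamma-2)-2\gamma}{p\gamma}$ dominates the crude bound $-d/(2\gamma)$, allowing the latter to be dropped; this does not affect the validity of the argument.
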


\begin{proof}
We start with estimating the density in the $L^2L^2$-norm, i.e. \eqref{beta-D}.
For $\gamma \geq 2$ we easily check
\begin{equation*}
\norm{\vrh}_{L^2L^2} \aleq \norm{\vrh}_{L^\infty L^\gamma} \aleq 1 \quad \mbox{ meaning } \ \beta_D=0.
\end{equation*}
Now, let us focus on the case $\gamma < 2$.
On the one hand, thanks to the inverse estimate we have
\begin{align*}
\norm{\vrh}_{L^2L^2} \aleq h^{\frac{\gamma-2}{2\gamma}d} \norm{\vrh}_{L^{\infty}L^{\gamma}} \aleq h^{\frac{\gamma-2}{2\gamma}d}.
\end{align*}
On the other hand, in view of Lemma \ref{RGS} we have
\begin{align*}
\norm{\vrh}_{L^{2-\gamma}L^{\infty}}^{2-\gamma}
\aleq h^{-(2-\gamma)(2d+p(\viso+1))/(p\gamma) },
\end{align*}
which yields
\begin{align*}
\norm{ \vrh}_{L^2L^2}
& = \left( \int_0^T\intTd{ \vrh^{\gamma} \cdot  \vrh^{2-\gamma}} \dt \right)^{1/2}
\leq \left( \int_0^T \norm{\vrh}_{L^{\infty}(\tor)}^{2-\gamma} \intTd{ \vrh^{\gamma}} \dt \right)^{1/2}
\br
&= \left( \int_0^T \norm{\vrh}_{L^{\infty}(\tor)}^{2-\gamma}  \norm{\vrh}_{L^{\gamma}(\tor)}^{\gamma}  \dt  \right)^{1/2}
\leq \norm{\vrh}_{L^{\infty}L^\gamma} ^{\gamma/2} \norm{\vrh}_{L^{2-\gamma}L^{\infty}}^{(2-\gamma)/2} \aleq h^{\frac{p(\viso+1)+2d}{p}\cdot \frac{\gamma-2}{2\gamma}}.
\end{align*}
This completes the proof of \eqref{beta-D}.

Next, we prove \eqref{beta-R} for the estimate of the $L^2L^{6/5}$-norm of the density.
Considering $\gamma \geq 6/5$ it is obvious that
\[
\norm{\vrh}_{L^2L^{6/5}} \aleq \norm{\vrh}_{L^\infty L^\gamma} \aleq 1.
\]
For $\gamma < \frac65$ the proof can be done in the following two ways. In the first approach we apply the inverse estimates to get
\begin{align*}
\norm{\vrh}_{L^2L^{6/5}} \aleq h^{\frac{5\gamma-6}{6\gamma}d} \norm{\vrh}_{L^{\infty}L^{\gamma}} \aleq h^{\frac{5\gamma-6}{6\gamma}d}.
\end{align*}
In the second approach, recalling estimate \eqref{RGS1} and applying the interpolation inequality we obtain
\begin{align*}
\norm{\vrh}_{L^2L^{6/5}}
\aleq  \norm{\vrh}_{L^\infty L^\gamma}^q
\norm{\vrh}_{L^\gamma L^{p\gamma/2}} ^{1-q}
\aleq h^{-\frac{1+\viso}{\gamma}  \times (1-q) } \quad \mbox{for} ~ p > \frac{12}{5\gamma} > 2.
\end{align*}
Here $q$ satisfies
\begin{align*}
\frac12 \geq \frac{q}{\infty} + \frac{1-q}{\gamma} \mbox{ and }
\frac65 \geq \frac{q}{\gamma} + \frac{1-q}{\gamma p/2}
\Longleftrightarrow \frac{2-\gamma}{2} \leq q \leq  \frac{5\gamma p -12}{6p -12}.
\end{align*}
Hence, the optimal bound is achieved by choosing $q=\frac{5\gamma p - 12}{6(p-2)}$, i.e.
\begin{align*}
 \norm{ \vrh}_{L^2L^{6/5}}
 \aleq h^{-\frac{1+\viso}{\gamma}  \times (1-q) }
=  h^{- \frac{1+\viso}{\gamma} \cdot \frac{(6 - 5\gamma)p}{6(p-2)}}.
\end{align*}
Collecting the above estimates we obtain \eqref{beta-R}.

Finally, we are left with the estimate of $\norm{\vrh \vuh}_{L^2L^2}$. It is easy to check
\begin{equation}\label{em1}
 \begin{aligned}
   \norm{\vrh \vuh}_{L^2L^2} &= \norm{\vrh^2 \vuh^2}_{L^1L^1}^{1/2} \aleq  \left( \norm{\vrh \vuh^2}_{L^\infty L^{p\gamma/(p\gamma-2)}}   \norm{\vrh}_{L^\gamma L^{p\gamma/2}} \right)^{1/2}
 \\& \aleq \left(  h^{-\frac{2d}{p\gamma}}  h^{-\frac{\viso+1}{\gamma}}\right)^{1/2}
  =  h^{-\frac{p(\viso+1)+2d}{2 p\gamma}} \quad \quad \mbox{for }  \gamma \in (1,2]
\end{aligned}
\end{equation}
and
\begin{equation}\label{em2}
 \norm{\vrh \vuh}_{L^2L^2} \aleq h^{-\frac{d}{2\gamma}} \norm{\vrh \vuh}_{L^\infty L^{2\gamma/(\gamma+1)}} \aleq h^{-\frac{d}{2\gamma}}.
 \end{equation}
Moreover, by H\"older's inequality we have
\begin{equation*}
\norm{\vrh \vuh}_{L^2 L^{\gamma p/(\gamma+p)}} \aleq  \norm{\vrh}_{L^\infty L^{\gamma}}  \norm{\vuh}_{L^2 L^p}  \quad \mbox{for any }  p>1,
\end{equation*}
from which we obtain
\begin{align*}
\mbox{for } d = 2: \ &
 \mbox{if } \gamma > 2, \   \norm{\vrh \vuh}_{L^2L^2} \aleq  \norm{\vrh \vuh}_{L^2 L^{p\gamma /(\gamma+p)}}   \aleq 1  \quad \mbox{with} ~  p \geq 2+\frac{4}{\gamma-2},
 \br
 &\mbox{if } \gamma \leq 2, \  \norm{\vrh \vuh}_{L^2L^2} \aleq  h^{\left(\frac12 - \frac{(\gamma+p)}{p\gamma}\right)d} \norm{\vrh \vuh}_{L^2 L^{p\gamma /(\gamma+p)}} \aleq  h^{\frac{p(\gamma-2)-2\gamma}{p\gamma}}  ~ \mbox{for any }   p > 1,\br
\mbox{for } d = 3: \ &
 \mbox{if } \gamma \geq 3, \  \norm{\vrh \vuh}_{L^2L^2} \aleq  \norm{\vrh \vuh}_{L^2 L^{6\gamma /(\gamma+6)}} \aleq 1,
 \br
 &\mbox{if } \gamma < 3, \  \norm{\vrh \vuh}_{L^2L^2} \aleq  h^{\left(\frac12 - \frac{(\gamma+6)}{6\gamma}\right)d} \norm{\vrh \vuh}_{L^2 L^{6\gamma /(\gamma+6)}} \aleq  h^{\frac{(\gamma-3)}{3\gamma}d} = h^{\frac{(\gamma-3)}{\gamma}}.
 \end{align*}
Consequently, collecting \eqref{em1}, \eqref{em2} and the above estimates, we obtain
 \begin{align*}
   \mbox{for} \ d = 2: \ &
 \mbox{if} \ \gamma > 2, \   \beta_M = 0,
 \br
 &\mbox{if} \ \gamma \leq 2, \ \beta_M =  \max\limits_{p \in \left[ 1, \infty \right)}\left\{ - \frac{p(\viso+1)+4}{2p\gamma}, \frac{p(\gamma-2)-2\gamma}{p\gamma}, -\frac{1}{\gamma} \right\}
 \br
 & \hspace{2.1cm}= \max\limits_{p \in \left[ \frac{2\gamma}{\gamma-1}, \infty \right)}\left\{ - \frac{p(\viso+1)+4}{2p\gamma}, \frac{p(\gamma-2)-2\gamma}{p\gamma}\right\},
 \br
   \mbox{for} \ d = 3: \ &
 \mbox{if} \ \gamma \geq 3, \  \beta_M = 0,
 \br
 & \mbox{if} \ \gamma \in (2,3), \  \beta_M = \max \left\{  \frac{\gamma-3}{\gamma},-\frac{3}{2\gamma} \right\} =  \frac{\gamma-3}{\gamma},
 \br
  &\mbox{if} \ \gamma \leq 2, \  \beta_M = \max \left\{ - \frac{\viso+2}{2\gamma}, \frac{\gamma-3}{\gamma},-\frac{3}{2\gamma} \right\} ,
 \end{align*}
which concludes the proof.
\end{proof}

\subsection{Negative variational estimates}\label{ap_lms-2}
Having shown the negative estimates of density and momentum we can present some useful negative variational estimates that shall be used later for the consistency formulation. These proofs are analogous to Lemma 11.5 and Lemma 11.6 of \cite{FeLMMiSh}.

\begin{Lemma}\label{lem_dd}
Let $(\vrh, \vuh)$ be a solution of the FV method \eqref{VFV} with $(h,\penl) \in (0,1)^2$ and $\gamma>1$.
Then the following hold:
\begin{subequations}\label{es-ds}
\begin{align}
&\label{es-d0}
\int_0^{\tau}  \intfaces{ \frac{\jump{\vrh}^2}{ \max\{\rho_{h}^{\rm in}, \rho_{h}^{\rm out}\} } \cdot \left( h^{\viso} + \abs{ \avs{\vuh} \cdot \vc{n} }\right) } \dt \aleq h^{\beta_D},
\\
\label{es-d1}
&  \int_0^{\tau}  \intfaces{   \abs{ \jump{\vrh} }  } \dt \aleq h^{-(\viso +1)/2},
\\&  \label{es-d2}
\int_0^{\tau}  \intfaces{  \abs{ \jump{\vrh} }  \cdot  \abs{ \avs{\vuh} \cdot \vc{n} } } \dt \aleq h^{(\betacf - 1)/2},
\\ \label{es-d3}
&  \int_0^{\tau}  \intfaces{  (\abs{\jump{\vrh}}+ \avs{ \vrh })  \abs{ \jump{\vuh} \cdot \vn }  } \dt \aleq h^{\beta_D},
\\  \label{es-d4}
&  \int_0^{\tau}  \intfaces{   \abs{ \jump{\vrh \vuh} \cdot \vn }  } \dt \aleq h^{(\betacf - 1)/2} + h^{\beta_D},
\end{align}
where $\beta_D$ is given in \eqref{beta-D} and
\begin{align}\label{beta-tR}
\betacf =
\begin{cases}
0, & \mbox{if} \ d = 2,\\
 \betane, & \mbox{if} \ d = 3.
\end{cases}
\end{align}
\end{subequations}
\end{Lemma}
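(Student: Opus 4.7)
All five estimates in Lemma~\ref{lem_dd} follow the same template: Cauchy--Schwarz pairings between the two dissipation quantities of the energy inequality \eqref{ap3}, namely
$
\int_0^T\!\intfaces h^{\viso}\avs{\vrh}\,|\jump{\vuh}|^2\,\ds\,\dt\aleq 1
$
and
$
\int_0^T\!\intfaces(h^{\viso}+|\avs{\vuh}\cdot\vn|)\Hc''(\vr_{h,\dagger})\jump{\vrh}^2\,\ds\,\dt\aleq 1,
$
the negative estimates \eqref{beta-D}--\eqref{beta-M}, mass conservation $\|\vrh\|_{L^1 L^1}\aleq 1$, and the velocity bound $\vuh\in L^2 L^p$ from \eqref{ap1}. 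A recurring tool is the discrete trace inequality $\sum_\sigma |\sigma|\,|\avs{f}|^q\aleq h^{-1}\|f\|_{L^q(\tor)}^q$, which transports $L^q(\tor)$ bounds to the face set.

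For \eqref{es-d0} I would use the identity $\jump{\vrh}^2\leq 2\avs{\vrh}|\jump{\vrh}|$ (equivalently, $|\jump{\vrh}|\leq\max\{\vrh^{\rm in},\vrh^{\rm out}\}$ together with $\avs{\vrh}/\max\{\vrh^{\rm in},\vrh^{\rm out}\}\in[\tfrac12,1]$) to reduce the LHS to bounding $\int(h^{\viso}+|\avs{\vuh}\cdot\vn|)|\jump{\vrh}|\,\ds\,\dt$. A Cauchy--Schwarz splitting of the form $|\jump{\vrh}|=\bigl((h^{\viso}+|\avs{\vuh}\cdot\vn|)\Hc''(\vr_{h,\dagger})\jump{\vrh}^2\bigr)^{1/2}\cdot\bigl((h^{\viso}+|\avs{\vuh}\cdot\vn|)\Hc''(\vr_{h,\dagger})\bigr)^{-1/2}$ then absorbs the dissipation and leaves the negative factor $\int (h^{\viso}+|\avs{\vuh}\cdot\vn|)/\Hc''(\vr_{h,\dagger})$, controlled using $1/\Hc''(\vr_{h,\dagger})\aleq \vr_{h,\dagger}^{2-\gamma}$, the discrete trace bound applied to $\vrh^{2-\gamma}$, the $L^2 L^p$ bound on $\vuh$, and \eqref{beta-D}, producing the exponent $\beta_D$. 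Estimate \eqref{es-d1} is the sub-case in which only the $h^{\viso}$ weight is present; \eqref{es-d2} is the sub-case in which only $|\avs{\vuh}\cdot\vn|$ is present, and the remaining negative factor is then bounded by $\|\vuh\|_{L^2 L^p}\,\|\vrh\|_{L^2 L^{p/(p-2)}}$ so that $\betacf$ emerges from \eqref{beta-R} (with $d=2$ giving $\betacf=0$ via a direct Sobolev/Poincar\'e computation in Lemma~\ref{lmSP}).

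For \eqref{es-d3}, using $\avs{\vrh}+|\jump{\vrh}|\leq 3\max\{\vrh^{\rm in},\vrh^{\rm out}\}\leq 6\avs{\vrh}$ and Cauchy--Schwarz against the viscous dissipation yields
$
\int(\avs{\vrh}+|\jump{\vrh}|)|\jump{\vuh}\cdot\vn|\aleq \bigl(h^{\viso}\!\!\int\avs{\vrh}|\jump{\vuh}|^2\bigr)^{1/2}\!\bigl(h^{-\viso}\!\!\int\avs{\vrh}\bigr)^{1/2},
$
and the trace/mass bound $\int\avs{\vrh}\aleq h^{-1}\|\vrh\|_{L^1 L^1}\aleq h^{-1}$, refined in $L^2$ via \eqref{beta-D}, delivers the exponent $\beta_D$ after matching powers. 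Finally, \eqref{es-d4} follows from the discrete Leibniz identity $\jump{\vrh\vuh}\cdot\vn=\avs{\vrh}\jump{\vuh}\cdot\vn+\jump{\vrh}\avs{\vuh}\cdot\vn$, which splits it into two terms controlled respectively by \eqref{es-d3} and \eqref{es-d2}.

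\textbf{Main obstacle.} The delicate point throughout is tracking the \emph{sharp} $h$-exponent. Both $\beta_D$ and $\betane$ in Lemma~\ref{lem_ne} are themselves defined via an optimization over an auxiliary H\"older exponent $p$, and the optimal $p$ must be chosen compatibly with the Cauchy--Schwarz pairings above so that the decay rate of $\|\vrh\|_{L^p L^q}$ used for the negative factor matches the exponent entering through the dissipation weight. In particular, one is forced into a case analysis distinguishing $\gamma\geq 2$, $\gamma\in[\tfrac65,2)$, $\gamma\in(1,\tfrac65)$ and, within each, $d=2$ vs.\ $d=3$. No genuinely new analytic input is required beyond Lemma~\ref{lem_ne}, the energy bound \eqref{ap3}, and the discrete trace/Sobolev--Poincar\'e inequalities; the difficulty is combinatorial, in arranging a single $p$ to be simultaneously optimal for the negative density estimate and for the weight paired against it.
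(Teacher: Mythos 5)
There is a genuine gap, concentrated in \eqref{es-d0} and \eqref{es-d3} — precisely the two estimates that carry the sharp rate $h^{\beta_D}$. For \eqref{es-d0} you reduce to $\int(h^{\viso}+|\avs{\vuh}\cdot\vn|)\,|\jump{\vrh}|$ and pair it by Cauchy--Schwarz against the dissipation \eqref{ap3}; but the leftover factor $\int(h^{\viso}+|\avs{\vuh}\cdot\vn|)/\Hc''(\vr_{h,\dagger})$ must then be handled with a trace inequality that costs a full $h^{-1}$ inside the square root, so the best this route can give is of order $h^{(\viso-1)/2}+h^{-1/2}$. That already fails for $\gamma\ge 2$, where $\beta_D=0$ and the claim is a bound by a constant. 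The paper's proof of \eqref{es-d0} does not use Cauchy--Schwarz at all: it invokes the renormalized continuity equation with $B(\vr)=\vr\ln\vr-\vr$ (so $B''(\vr)=1/\vr$), for which the upwind convexity defect is exactly $\jump{\vrh}^2/\bigl(2\max\{\vrh^{\rm in},\vrh^{\rm out}\}\bigr)$ weighted by $h^{\viso}+|\avs{\vuh}\cdot\vn|$; the whole left-hand side of \eqref{es-d0} is then bounded by the renormalized balance, namely by $1+\bigl|\int_0^\tau\intTd{\vrh\Divh\vuh}\dt\bigr|\aleq 1+\norm{\vrh}_{L^2L^2}\norm{\Divh\vuh}_{L^2L^2}\aleq h^{\beta_D}$, with no trace-inequality loss. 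This renormalization step is the key idea of the lemma and is absent from your plan.

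For \eqref{es-d3} you pair against the artificial-viscosity dissipation $h^{\viso}\int\avs{\vrh}|\jump{\vuh}|^2\aleq1$, which leaves $\bigl(h^{-\viso}\int\avs{\vrh}\bigr)^{1/2}\aleq h^{-(1+\viso)/2}$ — again strictly weaker than $h^{\beta_D}$ (e.g.\ for $\gamma\ge2$). The correct splitting is $\avs{\vrh}\,|\jump{\vuh}|=\bigl(h\avs{\vrh^2}\bigr)^{1/2}\bigl(|\jump{\vuh}|^2/h\bigr)^{1/2}$: the second factor integrates to $\norm{\Gradd\vuh}_{L^2L^2}^2\aleq1$ (the physical viscosity bound, carrying no $h^{\viso}$), and the first to $\norm{\vrh}_{L^2L^2}^2\aleq h^{2\beta_D}$ via \eqref{beta-D}, giving $h^{\beta_D}$ on the nose. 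Your treatment of \eqref{es-d1}, \eqref{es-d2} and the Leibniz splitting for \eqref{es-d4} does match the paper's, up to bookkeeping of the H\"older exponents (note that $1/\Hc''$ leaves $\vr_{h,\dagger}^{2-\gamma}$ rather than $\vr_{h,\dagger}+1$; the paper's device $|\jump{\vrh}|\le|\jump{\vrh}|\sqrt{\Hc''(\vr_{h,\dagger})(\vr_{h,\dagger}+1)}$ for $\gamma<2$, and the $\max\{\vrh^{\rm in},\vrh^{\rm out}\}$-weighted form for $\gamma\ge2$, is what makes $\betane$ from \eqref{beta-R} appear cleanly), but \eqref{es-d0} and \eqref{es-d3} need to be redone as above.
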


\begin{proof}

We start by showing \eqref{es-d0}. For any $\vr > 0$, taking $B(\vr)$ and $\phi_h$ in the renormalized continuity equation \cite[Lemma 8.2]{FeLMMiSh} as $\vr \ln \vr - \vr$ and $1$, respectively, we obtain
\begin{align}
& B'(\vr) = \ln(\vr) , \quad B''(\vr) = \frac1{\vr} > 0,
\br
& \intTd{ {\rm D}_t (\vrh  \ln \vrh) } + \intTd{  \vrh    \Divh  \vuh  }
\br
\leq  & - \sumKf \frac{|\sigma|}{|K|}  \left( h^\viso -  \left( \avs{\vuh}\cdot \vc{n} \right)^- \right)  \Big( \jump{B(\vrh )} - B'(\vrh )\jump{\vrh } \Big). \label{eq:dd}
\end{align}

Due to the convexity of $B(\vr)$, i.e.
\begin{align*}
\jump{B(\vrh)} - B'(\vrh)\jump{\vrh}  = \frac12 B''(\xi) \jump{\vrh}^2 
\geq \frac{\jump{\vrh}^2}{2 \max\{\rho_{h}^{\rm in}, \rho_{h}^{\rm out}\}}, \quad \xi \in \co{\rho_{h}^{\rm in}}{\rho_{h}^{\rm out}},
\end{align*}
we obtain
\begin{align*}
& h^\viso  \sumKf  \frac{|\sigma|}{|K|}    \Big( \jump{B(\vrh )} - B'(\vrh )\jump{\vrh } \Big) \geq  h^\viso \intfaces{ \frac{\jump{\vrh}^2}{ \max\{\rho_{h}^{\rm in}, \rho_{h}^{\rm out}\} } }.
\end{align*}
Moreover, we have
\begin{align*}
& -  \sumKf  \frac{|\sigma|}{|K|}    \left( \avs{\vuh}\cdot \vc{n} \right)^- \  \Big( \jump{B(\vrh )} - B'(\vrh )\jump{\vrh } \Big)
=  \sum_{\sigma\in\faces} |\sigma| \frac12  \bigg| \avs{\vuh}\cdot \vc{n}\bigg|  \ B''(\xi) \jump{\vrh}^2
\end{align*}
with $\xi \in \co{\rho_{h}^{\rm in}}{\rho_{h}^{\rm out}}$, which implies
\begin{align*}
&  -  \sumKf  \frac{|\sigma|}{|K|}  \left(  \left( \avs{u}\cdot \vc{n} \right)^- \right)  \Big( \jump{B(\vrh )} - B'(\vrh )\jump{\vrh } \Big) \geq  \intfaces{ \abs{ \avs{u}\cdot \vc{n} }  \frac{\jump{\vrh}^2  }{ 2\max\{\rho_{h}^{\rm in}, \rho_{h}^{\rm out}\} } }.
\end{align*}
Collecting the above estimates we derive from \eqref{eq:dd} that
\begin{align*}
& \int_0^{\tau}  \intfaces{ \frac{\jump{\vrh}^2}{ \max\{\rho_{h}^{\rm in}, \rho_{h}^{\rm out}\} }  \left( h^{\viso} + \frac{\abs{ \avs{\vuh} \cdot \vc{n} }}2 \right) } \dt
\br
\leq  &  \int_0^{\tau}   \sumKf  \frac{|\sigma|}{|K|}  \left( h^\viso -  \left( \avs{u}\cdot \vc{n} \right)^- \right)  \Big( \jump{B(\vrh )} - B'(\vrh )\jump{\vrh } \Big) \dt
\br
\leq & \intTd{ \vrh^0 \ln(\vrh^0) }  - \intTd{ \vrh  \ln(\vrh ) } - \int_0^\tau \intTdB{ \vrh \Divh \vuh }
\br
\aleq & 1 + \abs{  \int_0^\tau \intTdB{ \vrh \Divh \vuh } } \leq 1 +  \norm{\vrh}_{L^2L^2}  \norm{\Divh \vuh}_{L^2L^2} .
\end{align*}
Note that we have used here the inequality $\abs{\vr \ln(\vr)} \aleq 1 + \vr^\gamma$.
Consequently, applying Lemma \ref{lem_ne} concludes the proof of \eqref{es-d0}.

Secondly, thanks to H\"older's inequality and trace inequality, together with the density dissipation \eqref{es-d0} and uniform bounds \eqref{ap} we obtain \eqref{es-d1} in the following way:
\begin{equation}
\begin{aligned}
  \int_0^{\tau}  \intfaces{  \abs{ \jump{\vrh} }  } \dt
&\aleq  \left(  \int_0^{\tau}  \intfaces{ \frac{\jump{\vrh}^2}{ \max\{\rho_{h}^{\rm in}, \rho_{h}^{\rm out}\} }    } \dt\right)^{1/2} \left( \int_0^{\tau}  \intfaces{  \max\{\rho_{h}^{\rm in}, \rho_{h}^{\rm out}\}   } \dt\right)^{1/2}
\br
& \aleq  h^{-\viso/2}  h^{-1/2} = h^{-(1+\viso)/2} \quad \mbox{for} \  \gamma \geq 2,
\br
 \int_0^{\tau}  \intfaces{  \abs{ \jump{\vrh} }   } \dt
 & \aleq \int_0^{\tau}  \intfaces{  \abs{ \jump{\vrh} }   \ \sqrt{\Hc''( \vr_{h,\dagger} ) \; ( \vr_{h,\dagger} + 1)}  } \dt
\br
\aleq & \left(  \int_0^{\tau}  \intfaces{ \jump{\vrh}^2 \Hc''( \vr_{h,\dagger} )   } \dt\right)^{1/2} \left( \int_0^{\tau}  \intfaces{ ( \vr_{h,\dagger} + 1)   } \dt\right)^{1/2}
\br
\aleq & h^{-\viso/2}  h^{-1/2} = h^{-(1+\viso)/2} \quad \mbox{for} \  \gamma \in(1,2),
\end{aligned}
\end{equation}
where we have used the inequality
\[
\mathds{1}_{(0,\infty)}(\vr) \, \Hc''( \vr )  ( \vr+ 1) \ageq1 \mbox{ for } \gamma \in (1,2)
\]
and $\vr_{h,\dagger}$ is given in  \eqref{ap3}.

Thirdly, we can derive \eqref{es-d2} in an analogous way.
On the one hand, we have  that for $\gamma \geq 2$
\begin{align*}
&  \int_0^{\tau}  \intfaces{  \abs{ \jump{\vrh}  \avs{\vuh} \cdot \vc{n} } } \dt
\\& \aleq
\left(  \int_0^{\tau}  \intfaces{ \frac{\jump{\vrh}^2}{ \max\{\rho_{h}^{\rm in}, \rho_{h}^{\rm out}\} }  \abs{ \avs{\vuh} \cdot \vc{n} }  } \dt\right)^{1/2} \left( \int_0^{\tau}  \intfaces{  \max\{\rho_{h}^{\rm in}, \rho_{h}^{\rm out}\} \abs{ \avs{\vuh} \cdot \vc{n} }  } \dt\right)^{1/2}
\\& \aleq
 h^{-1/2} \norm{\vrh}_{L^2L^{p'}}^{1/2} \norm{\vuh}_{L^2L^p}^{1/2}
\aleq h^{-1/2} \norm{\vrh}_{L^2L^{p'}}^{1/2},
\end{align*}
where $p'= \frac{p}{p-1}$, for any $p>1$ in the case of $d=2$ and $p=6$ for the case of $d=3$.
On the other hand, for $\gamma < 2$ we have
\begin{align*}
&\int_0^{\tau}  \intfaces{  \abs{ \jump{\vrh} }  \cdot  \abs{ \avs{\vuh} \cdot \vc{n} } } \dt \aleq \int_0^{\tau}  \intfaces{  \abs{ \jump{\vrh} }  \cdot  \abs{ \avs{\vuh} \cdot \vc{n} } \cdot \sqrt{\Hc''( \vr_{h,\dagger} ) \cdot ( \vr_{h,\dagger} + 1)}  } \dt
\\& \aleq
\left(  \int_0^{\tau}  \intfaces{ \jump{\vrh}^2 \Hc''( \vr_{h,\dagger} ) \abs{ \avs{\vuh} \cdot \vc{n} }  } \dt\right)^{1/2} \left( \int_0^{\tau}  \intfaces{ ( \vr_{h,\dagger} + 1) \abs{ \avs{\vuh} \cdot \vc{n} }  } \dt\right)^{1/2}
\\& \aleq
h^{-1/2} (\norm{\vrh}_{L^2L^{p'}}^{1/2} +1)\norm{\vuh}_{L^2L^p}^{1/2}\aleq h^{-1/2} \norm{\vrh}_{L^2L^{p'}}^{1/2}.
\end{align*}
In view of the above two estimates, the proof of \eqref{es-d2} reduces to show
$\norm{\vrh}_{L^2L^{p'}} \aleq h^{\betacf}$.
If $d=2$, we take $p' \in (1,\gamma]$ (i.e. $p \geq \frac{\gamma}{\gamma-1}$) and obtain $\norm{\vrh}_{L^2L^{p'}} \aleq 1$.
If $d=3$, we choose $p=6$ and $p'=\frac{6}{5}$. Then we apply \eqref{beta-R} to get
\begin{align*}
\norm{\vrh}_{L^2L^{p'}}
=\norm{\vrh}_{L^2L^{6/5}}
\aleq h^{\betane},
\end{align*}
which completes the proof of \eqref{es-d2}.

The fourth estimate \eqref{es-d3} is straightforward:
\begin{align*}
& \int_0^{\tau}  \intfaces{ (\abs{\jump{\vrh}}+ \avs{ \vrh }) \abs{ \jump{\vuh} \cdot \vn} } \dt
\aleq
\left( \int_0^{\tau}  \intfaces{ h \avs{ \vrh^2 } } \dt \right)^{1/2}
\left( \int_0^{\tau}  \intfaces{ \frac{ \abs{ \jump{\vuh} }^2 }{h}} \dt \right)^{1/2}
 \\& \aleq \norm{\vrh}_{L^2L^2}\norm{\Gradd \vuh}_{L^2L^2} \aleq h^{\beta_D}.
\end{align*}

Finally, recalling the product rule for the equality
\[\jump{\vrh \vuh} = \jump{\vrh} \avs{\vuh} +\avs{\vrh} \jump{\vuh}
\]
we may employ \eqref{es-d1} and \eqref{es-d3} to derive \eqref{es-d4}
\begin{align*}
& \int_0^{\tau}  \intfaces{  \abs{ \jump{\vrh\vuh} \cdot \vn }  } \dt
\leq
 \int_0^{\tau}  \intfaces{  \abs{ \jump{\vrh} \avs{\vuh}\cdot \vn }  } \dt
+ \int_0^{\tau}  \intfaces{   \avs{\vrh} \abs{\jump{\vuh}\cdot \vn }  } \dt
\\& \aleq h^{(\betacf-1)/2} + h^{\beta_D},
\end{align*}
which completes the proof.
\end{proof}

\subsection{Consistency proof}\label{sec_ce}
Equipped with Lemmas \ref{thm_eeD}, \ref{lem_ne} and \ref{lem_dd}, we are now ready to prove the consistency formulation, which is similar to the one in \cite[Section 11.3]{FeLMMiSh}, \cite[Theorem 4.1]{FLMS_FVNS} or \cite[Section 2.7]{FLS_IEE}. 

\begin{proof}[Proof of Lemma \ref{thm_CS}]

Let $\tau \in [t_n, t_{n+1})$.

\paragraph{Step 1 -- time derivative terms:}
Let $r_h$ stand for $\vrh$ or $\vrh \vuh$.
Recalling \cite[equation(2.17)]{FLS_IEE} we have
\begin{align}\label{CST}
E_t(r_h,\phi)& \aleq  \left(\norm{   \pd_{t}^2\phi  }_{L^\infty L^\infty}  + \norm{ \pd_{t}\phi   }_{L^\infty L^\infty} \right) \TS
\aleq  \TS.
\end{align}
\paragraph{Step 2 -- convective terms:}
Applying H\"older's inequality with  \eqref{proj} and \eqref{es-ds} we obtain for $r_h = \vrh$ that
\begin{equation*}
\sum_{i = 1}^4\abs{ E_i(\vrh, \phi)} \aleq  h^{(1+\betacf)/2} + h^{(1+\viso)/2} + h^{1+\beta_D} \quad \quad \mbox{for} \ \phi \in L^\infty (0,T; W^{1,\infty}(\tor)).
\end{equation*}
Directly following \cite[Theorem 4.1]{FLMS_FVNS} we obtain for $r_h = \vrh \vuh$  that
\begin{equation*}
\sum_{i = 1}^4 \abs{  E_i(\vrh \vuh, \bfvarphi)} \aleq  h +  h^{1+\viso} + h^{1+\beta_M} \quad \quad \mbox{for} \ \bfvarphi \in L^\infty(0,T; W^{2,\infty}(\tor; \R^d)).
\end{equation*}
Moreover, it is obvious that
\begin{equation*}
 \Bigabs{\int_{\tau}^{t_{n+1}} \intTd{ r_h \cdot \Grad \phi} \dt } \aleq  \TS \|\Grad\phi\|_{L^\infty L^\infty} \|r_h\|_{L^1 L^1} \aleq  \TS, \quad  r_h = \vrh \ \mbox{or} \ \vrh \vuh.
\end{equation*}
Consequently, we obtain
\begin{equation}\label{Eis_1}
E_F(\vrh,\phi)\aleq  h^{(1+\betacf)/2} + h^{(1+\viso)/2} + h^{1+\beta_D}+ \TS; \quad 
E_F(\vmh,\bfvarphi) \aleq h +  h^{1+\viso} + h^{1+\beta_M} + \TS.
\end{equation}

 \paragraph{Step 3 -- viscosity terms:}
Applying the interpolation error estimates we have 
 \begin{align}\label{e_vis_1}
 \abs{E_{\Grad \vu}(\bfvarphi)}
 & \aleq  h \left( \norm{\Gradd \vuh }_{L^2 L^2} + \norm{\Divh \vuh }_{L^2 L^2} \right)  \norm{\Grad^2\bfvarphi}_{L^\infty L^\infty}
 \br
& +   (\TS)^{1/2}  \left( \norm{\Gradd \vuh }_{L^2 L^2} + \norm{\Divh \vuh }_{L^2 L^2} \right)  \|\Grad\bfvarphi\|_{L^\infty L^\infty}.
\end{align}

 \paragraph{Step 4 -- pressure term:}
Applying the interpolation error estimates we have 
\begin{align}\label{e_p_1}
 \abs{E_{p}(\bfvarphi)}
 \aleq  h \norm{p_h}_{L^\infty L^1}  \norm{\Grad^2\bfvarphi}_{L^\infty L^\infty}
 +   \TS  \norm{p_h}_{L^\infty L^1}   \|\Grad\bfvarphi\|_{L^\infty L^\infty}.
\end{align}

 \paragraph{Step 5 -- penalization term:}
With
$$\TS \norm{\vuh(t_{n})}_{L^2(\Os)}^2 \leq \TS \sum_{i=0}^{ N_T-1}\norm{\vuh(t_{i})}_{L^2(\Os)}^2 =  \norm{\vuh}_{L^2((0,T)\times \Os)}^2 \aleq \penl $$
we have 
\begin{align*}
\Bigabs{\frac{1}{\penl} \int_\tau^{t_{n+1}} \int_{\Os} \vuh \cdot \bfvarphi  \dxdt}  \aleq \frac{\norm{\bfvarphi}_{L^\infty L^\infty}  }{\penl} \TS\norm{\vuh(t_{n})}_{L^1(\Os)} \aleq \frac{\TS}{\penl} \norm{\vuh(t_{n})}_{L^2(\Os)} \aleq (\TS / \penl)^{1/2},
\end{align*}
resulting the following estimate of $E_{\penl}$
\begin{align}\label{e_pen_1}
\abs{E_{\penl}(\bfvarphi)}
 & \aleq  (\TS / \penl)^{1/2} + \frac{\norm{\vuh}_{L^2((0,T)\times\Osh)}}{\penl}  \norm{\bfvarphi}_{L^{\infty}L^{\infty}}  \left( |\Osh \setminus \Os|\right)^{1/2}
 \aleq  (\TS / \penl)^{1/2} +(h/\penl)^{1/2}.
\end{align}

In summary, combining \eqref{CST} -- \eqref{e_pen_1} we have
\begin{align*}
& \abs{e_{\vr}} \aleq \TS  + h^{(1+\viso)/2} + h^{(1+\betacf )/2} +  h^{1+\beta_D},
\br
& \abs{e_{\vm}}  \aleq  (\TS)^{1/2} + h + h^{1+\viso}  + h^{1+\beta_M}   + (\TS / \penl)^{1/2} + (h/\penl)^{1/2} ,
\end{align*}
which concludes the proof.
\end{proof}

\section{Some useful estimates}

In this section we derive some useful inequalities that help us to obtain the optimal error estimates in Theorem~\ref{THM:ES}. 
To begin, let us recall the regularity of the extended strong solution $(\tvr,\tvu)$, cf. \eqref{EXT1}:
\begin{align*}
&(\tvr,\tvu)|_{\Os} \in C^2(\Os;\R^{d+1}), \quad (\tvr,\tvu)|_{\Of} \in C^1((0,T)\times \Of;\R^{d+1}) \cap C(0,T;W^{4,\infty}(\Of;\R^{d+1})),\\
&\tvr \in L^{\infty}((0,T)\times \tor), \quad \tvu \in W^{1,\infty}((0,T)\times \tor;\R^{d}).
\end{align*}
Note that $(\tvr,\tvu)$ is piecewise smooth with possible discontinuities of $\tvr$ and $\Grad \tvu$ on the boundary.  
To this end, we introduce an artificial splitting of the domain and derive suitable estimates related to the boundary part.
\begin{Definition}[Splitting of the mesh]
\label{def_ES2}
We split the mesh $\grid$ into three pieces, see Figure \ref{fig:domain}. $\Oc_h$ denotes the area containing the neighbourhood of the fluid boundary $\pd \Of$
\begin{align*}
\Oc_h = \left\{  K \mid   \cup_{L\cap K\neq \emptyset} L  \cap \partial \Of \neq  \emptyset \right\}.
\end{align*}
The inner domain $\OI_h$ and the outer domain $\OO_h$ are given as
\[\OI_h \colon = \Of  \setminus  \Oc_h \quad \mbox{ and }\quad \OO_h \colon =\Os \setminus  \Oc_h .\]
\begin{figure}[!h]
	\centering
	\includegraphics[width=0.7\textwidth]{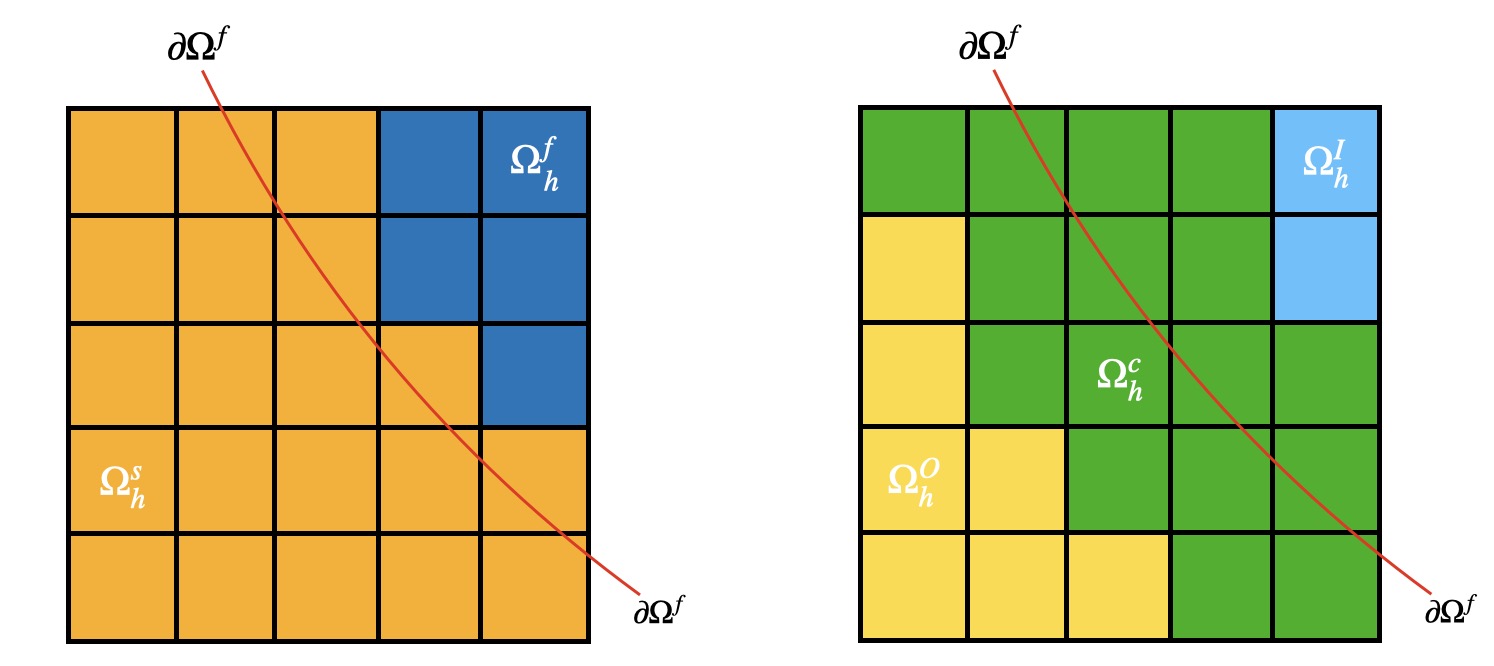}
	\caption{\small{ Zoom-in of domain splitting. }}\label{fig:domain}
\end{figure}
\end{Definition}
By the above definition, for any cell $K \in \Oc_h$,  we know that either $K$ or one of its neighbours intersects with the fluid boundary $\partial\Of$. Moreover, we have
\begin{subequations}\label{EXTE}
\begin{equation}\label{EXTE1}
\abs{\Oc_h} \aleq h, \quad \abs{\tvu} \aleq  \norm{\tvu}_{W^{1,\infty}(\tor)} h \quad \mbox{if} \ x \in \Oc_h,
\end{equation}
\begin{equation}\label{EXTE2}
 \abs{ \Gradh( \Pim \tvu)} + \abs{\Divh(\Pim \tvu)} + \abs{ \Gradd( \Pim \tvu)}  \aleq \begin{cases}
\norm{\tvu}_{W^{1,\infty}(\tor)} \  & \mbox{if} \ x \in  \OI_h \cup \Oc_h, \\
 0 & \mbox{if} \ x \in  \OO_h,
\end{cases}
\end{equation}
\begin{equation}\label{EXTE4}
\abs{ \Laph \Pim \tvu }  \aleq \begin{cases}
 \norm{\tvu}_{W^{2,\infty}(\OI_h)}  & \mbox{if} \ x \in  \OI_h, \\
\norm{\tvu}_{W^{1,\infty}(\tor)} h^{-1} \ & \mbox{if} \ x \in  \Oc_h, \\
 0 & \mbox{if} \ x \in  \OO_h.
\end{cases}
\end{equation}
\end{subequations}

\begin{Lemma}\label{lm_m}
Let $\delta \in (0,1)$ be an arbitrary constant,
$(\vrh, \vuh)$ be a solution of the FV method \eqref{VFV} with $(h,\penl) \in (0,1)^2.$
Let  $(\tvr, \tvu)$ be the extended strong solution in the sense of Definition~\ref{def_ES}.  
Then
\begin{subequations}\label{lm_ms}
\begin{equation}\label{lm_m3}
\intTauOch{ \vrh  \abs{\vuh}^2    }
\aleq h^3+ \int_0^\tau \RE(\vrh, \vuh| \tvr, \tvu) \dt,
\end{equation}
\begin{equation}\label{lm_u0}
\int_0^\tau \int_{\Of \setminus \Ofh} {\abs{\vuh}}\dxdt 
\aleq  \penl+ \delta h  \frac{\norm{\vuh}^2_{L^2((0,\tau)\times\Osh)}}{\penl},  
\end{equation}
\begin{equation}\label{lm_u1}
\intTauOs{\abs{\vuh}} \leq \intTauOsh{ \abs{ \vuh}  }
\aleq  
\penl+ \delta \frac{\norm{\vuh}_{L^2((0,\tau)\times\Osh)}^2}{\penl},
\end{equation}

\begin{equation}\label{lm_gradu4}
   \intTauOch{\abs{ \Gradd \vuh} }
\aleq   
h+ \delta \mu \norm{ \Gradd  \vuh - \Grad \tvu}_{L^2((0,\tau)\times\tor)}^2, 
\end{equation}
\begin{equation}\label{lm_div4}
   \intTauOch{\abs{ \Divh \vuh} } \aleq 
h + \delta  \nu \norm{ \Divh  \vuh - \Div \tvu}_{L^2((0,\tau)\times\tor)}^2, 
\end{equation}

\begin{equation}\label{lm_u2}
\intTauOch{\abs{\vuh}} 
\aleq h^2+ \penl + h\left( \delta \mu \norm{ \Gradd  \vuh - \Grad \tvu}_{L^2((0,\tau)\times\tor)}^2
 +  \delta  \frac{\norm{\vuh}_{L^2((0,\tau)\times \Osh)}^2}{\penl} \right).
 \end{equation}
\end{subequations}
\end{Lemma}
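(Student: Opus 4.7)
The plan is to prove all six inequalities by a common template: on the relevant thin strip (either $\Oc_h$, $\Of\setminus\Ofh$, $\Osh$, or $\Oc_h\cap\Os$), apply H\"older/Cauchy--Schwarz to factor out a $\sqrt{|\text{strip}|}$, then use Young's inequality to split the product into an ``absorbable'' piece of the form $\delta\mu\|\Gradd\vuh-\Grad\tvu\|_{L^2}^2$ or $\delta\|\vuh\|_{L^2(\Osh)}^2/\penl$ (which can later cancel against corresponding terms on the left-hand side of the relative energy identity \eqref{REb}) plus a ``safe'' piece of order $h^2$, $h^3$, or $\penl$.  The key inputs I will use are: the mesh-geometry bounds $|\Oc_h|,\,|\Of\setminus\Ofh|,\,|\Osh\setminus\Os|\aleq h$ and $|\Osh|\aleq 1$; the pointwise bound $|\tvu(x)|\aleq h$ for $x\in\Oc_h$, which follows from $\tvu|_{\Os}\equiv 0$ in the extension \eqref{EXT1} combined with the global Lipschitz regularity of $\tvu$ on $\tor$ inherited from \eqref{STC} and the Dirichlet condition $\vu|_{\pd\Of}=0$; the $L^\infty$ bound on $\Grad\tvu|_{\Of}$; the density upper bound \eqref{ass}; and the penalty control $\|\vuh\|^2_{L^2((0,T)\times\Osh)}\aleq\penl$ from \eqref{ap2}.

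For \eqref{lm_m3} I would expand $\vrh|\vuh|^2\leq 2\vrh|\vuh-\tvu|^2+2\vrh|\tvu|^2$: the first term integrated over $\Oc_h\subset\tor$ is at most $4\,\RE(\vrh,\vuh|\tvr,\tvu)$ by the very definition of the kinetic part of the relative energy, while the second is $\aleq\Ov\vr\cdot h^2\cdot h=h^3$ after time integration.  For \eqref{lm_u0} and \eqref{lm_u1}, Cauchy--Schwarz together with the inclusions $\Of\setminus\Ofh\subset\Osh$ (resp.\ $\Os\subset\Osh$) produces a factor $\sqrt h\,\|\vuh\|_{L^2((0,\tau)\times\Osh)}$ (resp.\ $\|\vuh\|_{L^2((0,\tau)\times\Osh)}$); I then apply Young's inequality in the form $\sqrt{X}\,\|\vuh\|_{L^2(\Osh)}\leq\penl/(4\delta)+\delta X\,\|\vuh\|^2_{L^2(\Osh)}/\penl$ with $X=h$ and $X=1$ respectively.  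The gradient bounds \eqref{lm_gradu4}--\eqref{lm_div4} follow from Cauchy--Schwarz on $\Oc_h$, the triangle inequality $|\Gradd\vuh|\leq|\Gradd\vuh-\Grad\tvu|+|\Grad\tvu|$, the $L^\infty$ control of $\Grad\tvu|_{\Of}$, and a final Young's step with weight $\delta\mu$ (resp.\ $\delta\nu$) applied to $\sqrt h\,\|\Gradd\vuh-\Grad\tvu\|_{L^2}$.

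For the last estimate \eqref{lm_u2} I will split $\Oc_h=(\Of\setminus\Ofh)\cup(\Oc_h\cap\Os)$: the first piece is covered by \eqref{lm_u0}, and on the second I repeat the argument of \eqref{lm_u1} but with the sharper bound $|\Oc_h\cap\Os|\aleq h$ in place of $|\Osh|\aleq 1$, yielding an extra $\sqrt h$ and hence $\aleq\penl+\delta h\,\|\vuh\|^2_{L^2((0,\tau)\times\Osh)}/\penl$.  This already implies the claimed inequality; the additional $h^2$ and $\delta h\mu\|\Gradd\vuh-\Grad\tvu\|^2_{L^2}$ on the right-hand side of \eqref{lm_u2} are harmless (they would appear naturally in an alternative argument based on a discrete Poincar\'e-type inequality connecting values of $\vuh-\tvu$ on $\Oc_h\cap\Of$ to values on $\Oc_h\cap\Os$ through paths of $O(1)$ neighbouring cells, but are not actually needed).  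The main subtlety will be twofold: (i) establishing $|\tvu|\aleq h$ on $\Oc_h$, which depends crucially on the extension choice $\tvu|_\Os=0$ together with the global Lipschitz regularity of $\tvu$ on $\tor$; and (ii) calibrating every Young's weight so that $\delta$ multiplies precisely those dissipation and penalty quantities that will later be absorbed into the left-hand side of the relative energy inequality used in the proof of Theorem~\ref{THM:ES}.
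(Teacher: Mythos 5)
Your arguments for \eqref{lm_m3}, \eqref{lm_u0}, \eqref{lm_u1}, \eqref{lm_gradu4} and \eqref{lm_div4} coincide with the paper's proof: the splitting $\vrh|\vuh|^2\aleq \vrh|\vuh-\tvu|^2+\vrh|\tvu|^2$ with $|\tvu|\aleq h$ and $|\Och|\aleq h$ for \eqref{lm_m3}, and Young's inequality with weight $\delta h/\penl$, $\delta/\penl$, $\delta\mu$, $\delta\nu$ on the respective thin strips for the others. These parts are fine.

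The proof of \eqref{lm_u2}, however, has a genuine gap. The set identity $\Och=(\Of\setminus\Ofh)\cup(\Och\cap\Os)$ that your argument rests on is false: by Definition~\ref{def_ES2}, $\Och$ also contains every cell $K\in\Ofh$ (i.e.\ $K\subset\Of$) one of whose neighbours intersects $\pd\Of$, and for small $h$ these cells form a one-cell-thick ring strictly inside the fluid domain. On this ring $\vuh$ is controlled neither by the penalty term (which acts only on $\Osh$, hence is invisible there) nor by \eqref{lm_u0}, so your decomposition simply misses a nonnegligible part of $\intTauOch{|\vuh|}$. This is exactly why the paper does \emph{not} argue by set decomposition but instead uses the discrete Poincar\'e-type step
\begin{equation*}
\intTauOch{\abs{\vuh}} \;\leq\; h\intTauOch{\abs{\Gradd\vuh}} \;+\; \int_0^\tau\int_{\Ochm}\abs{\vuh}\dxdt,
\end{equation*}
transporting the smallness of $\vuh$ from the penalized strip $\Ochm\subset\Osh$ across $O(1)$ neighbouring cells into the fluid-side part of $\Och$, and then invokes \eqref{lm_gradu4}. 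The terms $h^2$ and $h\,\delta\mu\norm{\Gradd\vuh-\Grad\tvu}^2_{L^2((0,\tau)\times\tor)}$ on the right-hand side of \eqref{lm_u2}, which you dismiss as ``not actually needed,'' are precisely the price of this gradient step and cannot be dispensed with. To repair your proof you must reinstate that discrete Poincar\'e argument (which you yourself sketch parenthetically); the remainder of your reasoning then goes through.
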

\begin{Remark}
We point out that the above estimates \eqref{lm_gradu4}--\eqref{lm_u2} on $ \Och$  
also hold on any general domain $\Ochp$ given by 
\[
\Ochp := \{ x\in \tor | \dist(x,\pd \Of) \aleq h\} ,
\] 
which can be seen as an extension of  $\Och$. 
\end{Remark}
\begin{proof}
Firstly, we use the triangular inequality, the boundedness of $\vrh$, and \eqref{EXTE1} to get \eqref{lm_m3}
\begin{align*}
&  \intTauOch{ \vrh \abs{\vuh}^2  }  \aleq \intTauOch{ \vrh\abs{ \vuh - \tvu}^2  }
+ \intTauOch{ \vrh \abs{ \tvu}^2  }
\aleq \int_0^\tau \RE(\vrh, \vuh| \tvr, \tvu) \dt + h^3.
\end{align*}
Secondly, by Young's inequality we get \eqref{lm_u0}
 \begin{align*}
& \intTau \int_{\Of \setminus \Ofh} {\abs{\vuh}} \dxdt
\leq \intTau \int_{\Of \setminus \Ofh}{\left(\frac{\delta h}{2\penl} \abs{\vuh}^2 + \frac{\penl}{2\delta h}\right) }\dxdt
 \aleq \delta h  \frac{\norm{\vuh}^2_{L^2((0,\tau)\times\Osh)}}{\penl} +\penl. 
\end{align*}
Further, denoting 
$\Ochm = \{x \in \Osh| \dist(x, \pd \Of) \aleq h  \}
$ we can exactly follow the above estimate to get 
 \begin{equation}\label{Ochm}
 \intTauOchm{\abs{\vuh}}
 \aleq \delta h  \frac{\norm{\vuh}^2_{L^2((0,\tau)\times\Osh)}}{\penl} +\penl. 
\end{equation}
Analogously, we have the third estimate \eqref{lm_u1}
\begin{align*}
\intTauOs{\abs{\vuh}} \leq \intTauOsh{ \abs{ \vuh}  }
\aleq \penl^{1/2} \frac{\norm{\vuh}_{L^2((0,\tau)\times\Osh)}}{\penl^{1/2}}
\aleq \frac{\penl}{\delta} + \delta \frac{\norm{\vuh}_{L^2((0,\tau)\times\Osh)}^2}{\penl}.
\end{align*}
Fourthly, we use triangular inequality, Young's inequality, and \eqref{EXTE1} to get \eqref{lm_gradu4}
\begin{align*}
&\intTauOch{ \abs{\Gradd \vuh} }
\leq \intTauOch{ |\Gradd \vuh - \Grad \tvu| } + \intTauOch{ \abs{\Grad \tvu} }
\\ &
\aleq \intTauOch{\left( \delta \mu |\Gradd \vuh - \Grad \tvu|^2 + \frac1{\delta \mu} \right) }
 + h \norm{ \Grad \tvu }_{L^\infty(\tor;\R^{d\times d})} 
\\& \aleq h + \delta \mu \norm{ \Gradd  \vuh - \Grad \tvu}_{L^2((0,\tau)\times\tor)}^2.
\end{align*}
The same process applies to $\Divh \vuh$ and yields \eqref{lm_div4}.

To show the sixth estimate \eqref{lm_u2}, we apply \eqref{lm_gradu4} and \eqref{Ochm} and find
\begin{align*}
& \intTauOch{\abs{\vuh}} 
\leq   h\intTauOch{|\Gradd \vuh|} + \intTauOchm{\abs{\vuh}} 
 \\& \aleq h^2 +\penl + h\left(\delta \mu \norm{ \Gradd  \vuh - \Grad \tvu}_{L^2((0,\tau)\times\tor)}^2
 +  \delta  \frac{\norm{\vuh}_{L^2((0,\tau)\times \Osh)}^2}{\penl}\right).
\end{align*}
\end{proof}

\begin{Lemma}
Let $v \in L^1(\tor)$, $\vu \in W^{2,\infty}(\Of;\R^d)$, $\tvu$ be given by Definition \ref{def_ES}. Let $(\vrh, \vuh)$ be a solution of the FV method \eqref{VFV} with  $(h,\penl) \in (0,1)^2$. Then
\begin{subequations}
\begin{equation}\label{grad_u2}
\abs{\intTd{ v (\Grad \tvu - D_1 \tvu)}} +\abs{\intTd{ v (\Div \tvu - D_2 \tvu)}} 
\aleq h +\intOch{ \abs{v} }, 
\end{equation}
where $D_1 = \Gradd \Pim , \Gradh \Pim, \Pie\Grad  $ and $D_2 = \Divh \Pim, \Pid \Div$ with 
\[
\Pie \bfphi = \left( \Pie^{(1)} \phi_1, \cdots, \Pie^{(d)} \phi_d \right), 
\Piei \phi = \sum_{\sigma \in \facesi}  \frac{\mathrm{1}_{D_\sigma}}{|\sigma|} \intSh{\phi}; \quad  \Pid \phi|_{K} := \frac{1_K}{|\ep|} \int_{\ep} \phi \ds, \ep \in \pd D_\sigma \cap K.
\]
Further, it holds 
\begin{equation}\label{grad_u}
\norm{ \Grad \tvu - \Gradd \Pim \tvu}_{L^2(\tor)}^2 + \norm{ \Grad \tvu - \Pie\Grad \tvu}_{L^2(\tor)}^2
+\norm{ \Div \tvu  - \Pid \Div \tvu}_{L^2(\tor;\R^d)}^2
\aleq h.
\end{equation}
\end{subequations}
\end{Lemma}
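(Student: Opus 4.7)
The plan is to split the torus into the three pieces introduced in Definition~\ref{def_ES2} and exploit that on $\OI_h$ the extended velocity $\tvu$ inherits the $W^{2,\infty}$ regularity of $\vu$, on $\OO_h$ it vanishes identically, while on the boundary strip $\Och$ only the $W^{1,\infty}(\tor)$ bound is available. Concretely, for each choice of $D_1\in\{\Gradd\Pim,\Gradh\Pim,\Pie\Grad\}$ and $D_2\in\{\Divh\Pim,\Pid\Div\}$ we decompose
\[
\int_{\tor} v\,(\Grad\tvu-D_1\tvu)\dx=\int_{\OI_h}+\int_{\OO_h}+\int_{\Och},
\]
and similarly for $\Div\tvu-D_2\tvu$.

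\noindent On $\OI_h$ every cell and all its neighbours sit inside $\Of$, so $\tvu$ is $W^{2,\infty}$ on the relevant stencils. Standard interpolation and midpoint-quadrature estimates (of the type used for \eqref{proj}) then yield the pointwise bound $|\Grad\tvu-D_1\tvu|+|\Div\tvu-D_2\tvu|\aleq h\,\|\tvu\|_{W^{2,\infty}(\Of)}$, so that the $\OI_h$ contribution is controlled by $h\,\|v\|_{L^1(\tor)}\aleq h$. On $\OO_h$, by \eqref{EXTE2} and the fact that $\tvu\equiv 0$ in a neighbourhood of each such cell, both $\Grad\tvu$ and $D_1\tvu$ (respectively $\Div\tvu$ and $D_2\tvu$) vanish, so the contribution is zero. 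On the remaining strip $\Och$ we simply use the $L^\infty$ bounds $|\Grad\tvu|+|D_1\tvu|+|\Div\tvu|+|D_2\tvu|\aleq\|\tvu\|_{W^{1,\infty}(\tor)}$ coming from \eqref{EXTE2}, to estimate the contribution by $\int_{\Och}|v|\dx$. Summing the three pieces produces \eqref{grad_u2}.

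\noindent For \eqref{grad_u} the same splitting applies with $v$ replaced by the pointwise difference. On $\OI_h$ the squared error is $\aleq h^2$ pointwise, so the integral is $\aleq h^2|\OI_h|\aleq h^2$; on $\OO_h$ it vanishes; on $\Och$ the integrand is uniformly bounded, so the contribution is $\aleq |\Och|\aleq h$ by \eqref{EXTE1}. Together these give the $O(h)$ bound. The only subtle point is making sure the vanishing claim on $\OO_h$ is correct for each of the operators $\Gradd\Pim$, $\Gradh\Pim$, $\Pie\Grad$, $\Divh\Pim$, $\Pid\Div$; this requires checking that the stencil of each operator at a cell or dual cell in $\OO_h$ is contained entirely in $\Os$, which follows from Definition~\ref{def_ES2} since neighbours of $\OO_h$-cells do not touch $\pd\Of$.

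\noindent The main (and only) genuine obstacle is the loss of regularity of $\tvu$ across $\pd\Of$: $\Grad\tvu$ has a jump there, so the usual $O(h)$ interpolation estimate fails on $\Och$. The boundary-strip measure bound $|\Och|\aleq h$ from \eqref{EXTE1} is precisely what compensates for this loss and gives the final $O(h)$ rate rather than $O(h^2)$; this is also why the right-hand side of \eqref{grad_u2} retains the explicit boundary term $\int_{\Och}|v|$, which cannot in general be absorbed into $h\,\|v\|_{L^1}$.
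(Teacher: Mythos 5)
Your proof is correct and follows essentially the same route as the paper: the identical three-way splitting of $\tor$ into $\OI_h$, $\Och$, $\OO_h$, with the $W^{2,\infty}$ interpolation bound on $\OI_h$, the vanishing of both terms on $\OO_h$, and the crude $W^{1,\infty}$ bound on the boundary strip. The paper obtains \eqref{grad_u} by simply inserting $v=\Grad\tvu-D_1\tvu$ (resp.\ $\Div\tvu-D_2\tvu$) into \eqref{grad_u2}, which is the same computation you carry out explicitly.
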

\begin{proof}
Thanks to the regularity of $\tvu$, the estimate \eqref{grad_u} directly follows from \eqref{grad_u2}  by taking $v = \Grad \tvu - D_1 \tvu$, $\Div \tvu - D_2 \tvu$, respectively. 
To see \eqref{grad_u2} it is enough to show the proof for one term, e.g., $\abs{\intTd{ v (\Grad \tvu - \Gradd \Pim \tvu)}}$, as the rest can be done exactly in the same way. Splitting $\tor$ into three regions $\OI_h, \Oc_h, \OO_h$ we obtain 
\begin{align*}
&\abs{\intTd{ v (\Grad \tvu - \Gradd \Pim \tvu)}}
\\ & \leq
\abs{\intOIh{ v (\Grad \tvu - \Gradd \Pim \tvu)}}
+\abs{\intOch{ v (\Grad \tvu - \Gradd \Pim \tvu)}}
+\abs{\intOOh{ v (\Grad \tvu - \Gradd \Pim \tvu)}}
\\& \aleq
 \norm{v}_{L^1(\OI_h)} \norm{ \Grad \tvu - \Gradd \Pim \tvu}_{L^\infty(\OI_h;\R^{d\times d})}
+\norm{ v }_{L^1(\Oc_h)} \norm{ \Grad \tvu - \Gradd \Pim \tvu}_{L^\infty(\Oc_h;\R^{d\times d})}
+0
\\& \aleq
h \norm{ \vu }_{W^{2,\infty}(\Of;\R^d)}\norm{v}_{L^1(\OI_h)}
+\norm{ \Grad \tvu }_{L^\infty(\tor;\R^{d\times d})}
\norm{v}_{L^1(\Oc_h)}
\aleq h +\norm{ v }_{L^1(\Oc_h)},
\end{align*}
which completes the proof.
\end{proof}

\section{Proof of error estimates}\label{sec_pee}
In this section we show the details of the proof of the error estimates stated in Theorem~\ref{THM:ES}. 

\subsection{Relative energy balance}\label{sec_REB}
We start with deriving the relative energy balance by means of the consistency formulation. 
Noticing the lower regularity of $\tvr$ on $\tor$, i.e. $\tvr \in L^{\infty}((0,T)\times \tor)$, let us modify the density consistency formulation \eqref{CS1-new}: 
\begin{align} \label{CS1-new-1}
e_\vr^{new}(\tau, \TS, h, \phi) := &  \left[ \intTd{ \vrh \phi  } \right]_{t=0}^{t=\tau} -
\int_0^\tau \intOfB{   \vrh \partial_t \phi + \vrh   \vuh \cdot \Grad \phi } \dt,
\end{align}
where $\phi|_{\Os} \in L^{\infty}(\Os), \phi|_{\Of} \in W^{1,\infty}((0,T)\times \Of)$.
Following the decomposition of $e_{\vr}$ in Lemma~\ref{thm_eeD} we have 
\begin{equation}\label{eeD-n}
\begin{aligned}
&e_\vr^{new} =  -\sum_{i=1}^3 E_{i}(\vrh, \phi) - E_4^{new}(\vrh,\phi) + \int_{\tau}^{t_{n+1}} \intOf{ \vrh \vuh \cdot \Grad \phi }, 
\\  
&E_4^{new}(\vrh,\phi) = \intn \intOf{ \vrh \vuh \cdot \Grad \phi } \dt - \intn \intTd{ \vrh \vuh \cdot \Gradh (\Pim \phi)} \dt.
\end{aligned}
\end{equation}

Hence, collecting the energy estimate \eqref{ST},  
the density consistency formulation \eqref{CS1-new-1} with the test function $\phi_1 = \frac12  \abs{\tvu}^2-\Hc'(\tvr)$ and the momentum consistency formulation \eqref{CS2-new} with the test function $\bfvarphi=- \tvu$, we obtain
\begin{align*}
 &\left[ \RE(\vrh, \vuh| \tvr, \tvu)  \right]_{t=0}^{t=\tau}  +  \intTauOB{\mu \abs{\Gradd \vuh- \Grad \tvu}^2 +  \nu \abs{\Divh \vuh- \Div \tvu}^2 } 
 \br
& \hspace{2cm} + \frac{1}{\penl} \intTauOsh{|\vuh|^2} + \int_0^{\tau} D_{num}\, \dt 
\br 
 & = \intTauOf{\Big(\vrh \pd_t \frac{\abs{\tvu}^2}{2} + \vrh \vuh \cdot \Grad \frac{\abs{\tvu}^2}{2} \Big)} 
\br
&
- \intTauOf{\Big( \vrh \pd_t \Hc'(\tvr) + \vrh \vuh \cdot \Grad \Hc'(\tvr) \Big)} + e_{\vr}^{new} \left(\tau, h, \phi_1 \right)
\br
& - \intTauTdB{\vrh \vuh \pd_t \tvu + \vrh \vuh \otimes \vuh : \Grad \tvu + p_h \Div \tvu } 
\br
& + \intTauTdB{\mu \Gradd \vuh : \Grad \tvu + \nu \Divh \vuh \Div \tvu} + \frac{1}{\penl} \intTauOs{ \vuh \cdot \tvu} + e_{\vm} (\tau,  h, -\tvu)
\br
&+\intTauTd{ \pd_t \Big( \tvr \Hc'(\tvr) - \Hc(\tvr) \Big) }.
\end{align*}

Further, let us split the integrals in the right hand side into two parts: $A=\intOs{\cdot}$ and $B=\intOf{\cdot}$.
Applying \eqref{EXT1} 
we have $A = 0$. On the other hand, thanks to
\begin{align*}
&\pd_t \tvr + \Div(\tvr \tvu)=0  \ \mbox{ and } \
\tvr (\pd_t \tvu + \tvu \cdot \Grad \tvu)  + \Grad p(\tvr)  - \mu \Lap \tvu - \nu \Grad \Div \tvu =0 \ \mbox{ on } \ \Of, 
\br
& \intOf{\Big( \tvu \cdot \Grad p(\tvr) + p(\tvr) \Div \tvu \Big)} = 0, \quad \intOf{ \Big( \Div \bS(\Grad\tvu) \cdot \tvu + \bS(\Grad\tvu) : \Grad \tvu \Big) } = 0,
\end{align*}
 we can follow calculations in \cite[Section 3]{FLS_IEE} and obtain $B = \sum_{i=1}^5 R_i$ with
\begin{equation*}
\begin{aligned}
R_1 & =
\intTauOf{(\vrh-\tvr)  (\tvu - \vuh) \cdot \left(\pd_t \tvu + \tvu \cdot \Grad \tvu  + \frac{\Grad p(\tvr)}{\tvr} \right) },
\\R_2 & =
- \intTauOf{ \vrh (\vuh-\tvu) \otimes (\vuh-\tvu) : \Grad \tvu},
\\ R_3 & =
 -  \mu \intTauOf{ \big( \Gradd \vuh: \Grad \tvu  +  \vuh \cdot \Lap \tvu   \big)},
\\ R_4 & =
- \nu \intTauOf{   \big(   \Divh \vuh \Div \tvu + \vuh \cdot \Grad \Div \tvu  \big) },
\\  R_5 & =
- \intTauOf{\Big(p_h - p'(\tvr) (\vrh -\tvr) -p(\tvr) \Big) \Div \tvu}.
\end{aligned}
\end{equation*}
Altogether, we obtain the following relative energy balance  
\begin{align*}
 \left[ \RE(\vrh, \vuh| \tvr, \tvu)  \right]_{t=0}^{t=\tau} & +  \intTauOB{\mu \abs{\Gradd \vuh- \Grad \tvu}^2 +  \nu \abs{\Divh \vuh- \Div \tvu}^2 }
\br &
 + \frac{1}{\penl} \intTauOsh{|\vuh|^2}
  =
- \int_0^{\tau} D_{num}\, \dt + e_S  + e_R,
\end{align*}
where $D_{num} \geq 0$ is given in Lemma~\ref{thm_ST}  and 
\begin{equation}\label{ee-SR}
e_S
 =  
 e_{\vr}^{new} \left(\tau, h, \phi_1 \right) +  e_{\vm} (\tau,  h, -\tvu), \ \phi_1 = \frac12  \abs{\tvu}^2-\Hc'(\tvr); \quad 
e_R =  \sum_{i=1}^5 R_i.  
\end{equation}

\subsection{Estimates on $e_S$}
\label{sec_ceb}
In this section we estimate new consistency error $e_S$ given in \eqref{ee-SR} for the semi-discrete version of the FV scheme with bounded density. It means that time evolution is not approximated $D_t = \pdt$ and $\vrh \aleq 1$.  
\begin{Lemma}\label{lm_es}
Let $\gamma > 1$ and $(\vrh, \vuh)$ be a solution of the FV method \eqref{VFV} with $(h,\penl) \in (0,1)^2$ and $\alpha > 0$. 
Let  $(\tvr, \tvu)$ be the extended strong solution in the sense of Definition~\ref{def_ES}. 
Suppose that $\vrh$ is uniformly bounded from above by a positive constant, cf.\ \eqref{ass}.  
Then
\begin{equation*}
    \begin{aligned}
    \abs{e_S} \aleq &  h + h^{(1+\viso)/2} + h^{\viso}
  +\frac{\penl}{h} + \frac{h^3}{\penl} 
+ \int_0^\tau \RE(\vrh, \vuh| \tvr, \tvu) \dt + \delta \frac{\norm{\vuh}_{L^2(0,\tau)\times\Osh)}^2}{\penl}  
\\&  + \delta \mu\norm{ \Gradd  \vuh - \Grad \tvu}_{L^2((0,\tau)\times\tor)}^2 + \delta \nu\norm{ \Divh  \vuh - \Div \tvu}_{L^2((0,\tau)\times\tor)}^2.
\end{aligned}
\end{equation*}
\end{Lemma}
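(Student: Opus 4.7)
The plan is to start from the decomposition of $e_\vr^{new}$ provided by \eqref{eeD-n} and the decomposition of $e_\vm$ provided by Lemma~\ref{thm_eeD}, noting that in the semi-discrete setting the time-derivative errors $E_t$ and the $\int_\tau^{t_{n+1}}$ remainders vanish identically. Writing $\phi_1=\frac12|\tvu|^2-\Hc'(\tvr)$, what remains to control are the convective numerical-viscosity pieces $E_1,E_2,E_3$, the transport-gradient pieces $E_4^{new}(\vrh,\phi_1)$ and $E_4(\vmh,-\tvu)$, together with $E_{\Grad\vu}(-\tvu)$, $E_{p}(-\tvu)$ and $E_{\penl}(-\tvu)$. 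The overarching principle is that the two test functions are only piecewise smooth: $\phi_1$ jumps across $\pd\Of$ (because $\tvr$ does), while $-\tvu$ is globally Lipschitz with a jump of $\Grad\tvu$ across $\pd\Of$. Every spatial integral will therefore be split across the regions $\OI_h$, $\Och$ and $\OO_h$ of Definition~\ref{def_ES2}, and the boundary layer $\Och$ (of measure $\mathcal O(h)$) will be handled using Lemma~\ref{lm_m}.

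For the convective numerical-diffusion terms I would combine $|\jump{\Pim\phi_1}|\aleq h$ and $|\jump{\Pim\tvu}|\aleq h$ on faces not crossing $\pd\Of$ with the uniform boundedness of $\phi_1$ and $\tvu$ on the $\mathcal O(1)$-total-area set of interface-crossing faces. Together with the dissipation bounds of Lemma~\ref{lem_dd} -- which simplify considerably under the standing hypothesis $\vrh\leq\Ov\vr$ (effectively $\beta_D=0$) -- and the bound $\int_0^\tau\intfaces{|\jump{\vrh}|}\dt\aleq h^{-(\viso+1)/2}$, this produces a contribution of order $h^{(1+\viso)/2}$ from the smooth part, plus, from $E_3$ only, a residual $h^\viso$ coming from the interface-crossing faces (namely $h^\viso$ times an $\mathcal O(1)$ face-integral of $|\jump{\vrh}|$ over $\Och$). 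The analogous momentum estimates $E_i(\vmh,-\tvu)$ are slightly cleaner since $\tvu$ does not jump, producing only the $h^{(1+\viso)/2}$ contribution.

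The transport-gradient pieces are where the $\penl/h$ contribution originates. Because $\phi_1$ jumps across $\pd\Of$, on interface-crossing faces one has $|\Gradh\Pim\phi_1|\sim 1/h$, so the $\Och$-part of $E_4^{new}$ is bounded by $\frac1h\intTauOch{|\vuh|}$ after using $\vrh\aleq 1$. Applying \eqref{lm_u2} of Lemma~\ref{lm_m} then converts this into $h+\penl/h+\delta\mu\norm{\Gradd\vuh-\Grad\tvu}_{L^2}^2+\delta\norm{\vuh}_{L^2((0,\tau)\times\Osh)}^2/\penl$. The momentum counterpart involves only the globally Lipschitz $\tvu$, so no $1/h$ singularity arises and its $\Och$-part is controlled via \eqref{grad_u2} together with \eqref{lm_gradu4}--\eqref{lm_div4} alone.

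The viscous and pressure errors $E_{\Grad\vu}(-\tvu)$ and $E_{p}(-\tvu)$ are handled in one stroke by \eqref{grad_u2}: each is $\aleq h$ plus one of the boundary-layer quantities $\intTauOch{|\Gradd\vuh|}$, $\intTauOch{|\Divh\vuh|}$, $\intTauOch{p_h}$, the first two absorbed into the displayed dissipation norms via \eqref{lm_gradu4}--\eqref{lm_div4} and the last bounded by $\norm{p_h}_{L^\infty}|\Och|\aleq h$ using \eqref{ass}. Finally, for $E_{\penl}(-\tvu)$, since $\tvu\equiv 0$ on $\Os$ only the strip $\Osh\setminus\Os$ of measure $\mathcal O(h)$ contributes, and $|\Pim\tvu|\aleq h$ there (from $\tvu|_{\pd\Of}=0$ and the Lipschitz regularity of $\tvu$); Cauchy--Schwarz and Young's inequality then turn this into $h^3/\penl+\delta\norm{\vuh}_{L^2((0,\tau)\times\Osh)}^2/\penl$. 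Summing all contributions yields the claimed estimate. I expect the main obstacle to be the careful interface bookkeeping required in the analysis of $E_3$ and $E_4^{new}$: one must isolate the $\mathcal O(1)$-area set of interface-crossing faces from the interior faces, and exploit the $\beta_D=0$ simplification enabled by \eqref{ass}, in order to extract the sharp $h^\viso$ rate without any degradation.
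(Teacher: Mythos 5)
Your overall architecture --- the decomposition from Lemma~\ref{thm_eeD}/\eqref{eeD-n}, the splitting of every integral across $\OI_h$, $\Och$, $\OO_h$, and the absorption of the boundary-layer integrals through Lemma~\ref{lm_m} --- coincides with the paper's, and your handling of $E_2$, $E_3$, $E_4$, $E_4^{new}$, $E_{\Grad \vu}$, $E_{p}$ and $E_{\penl}$ is essentially the argument given there. The gap is in $E_1$. Estimating $E_1(\vrh,\phi_1)=\frac12\int_0^\tau\intfaces{|\avs{\vuh}\cdot\vc{n}|\,\jump{\vrh}\,\jump{\Pim\phi_1}}\dt$ by $|\jump{\Pim\phi_1}|\aleq h$ on interior faces together with the dissipation bounds of Lemma~\ref{lem_dd} only yields $h\cdot\int_0^\tau\intfaces{|\jump{\vrh}|\,|\avs{\vuh}\cdot\vc{n}|}\dt\aleq h\cdot h^{(\betacf-1)/2}=h^{1/2}$ (with $\betacf=0$), \emph{not} $h^{(1+\viso)/2}$: the Cauchy--Schwarz step inevitably pays a factor $\bigl(\int_0^\tau\intfaces{|\avs{\vuh}\cdot\vc{n}|}\dt\bigr)^{1/2}\aleq h^{-1/2}$ coming from the $O(h^{-1})$ total face measure, and the boundedness of $\vrh$ does not remove it. For $\viso>1/2$ --- in particular for the choice $\viso=1$ that is supposed to deliver the optimal rate $\beta_{RE}=1$ in Theorem~\ref{THM:ES} --- the term $h^{1/2}$ is not dominated by $h+h^{(1+\viso)/2}+h^{\viso}$, so your argument proves only a weaker statement. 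The same difficulty, in aggravated form, affects $E_1(\vmh,-\tvu)$: Lemma~\ref{lem_dd} contains no bound for $\int_{\faces}|\jump{\vrh\vuh}|\,|\avs{\vuh}\cdot\vc{n}|\ds$, so the $h^{(1+\viso)/2}$ you claim there is unsupported.

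The paper avoids this entirely by a discrete double summation by parts (formula \eqref{ibp}): it rewrites $E_1(r_h,\phi)$ as $h\sum_{i}\int_0^\tau\intTd{r_h\,\pdmeshi\bigl(|\avsi{\uih}|\,\pdedgei\Pim\phi\bigr)}\dt$, so that after the discrete product rule the increments fall on the test function, where a full second discrete derivative is available away from $\pd\Of$. This gives $O(h)$ on $\tor\setminus\Och$ and reduces the interface contribution to $h^{-1}\intTauOchp{|\vuh|}+\intTauOchp{|\Gradd\vuh|}$ for $r_h=\vrh$, and to $\intTauOchp{\vrh|\vuh|^2}$ for $r_h=\vrh\vuh$ --- exactly the quantities that Lemma~\ref{lm_m} converts into $h+\penl/h+\delta(\cdots)$. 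You need this (or an equivalent) mechanism for $E_1$; the face-wise dissipation estimates alone cannot deliver the stated rate.
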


\begin{proof}
Let us recall the error terms in $e_S$ given in \eqref{eeD-n} and \eqref{eeD-1}. We study each term separately. 
\begin{itemize}
\item  $E_1(r_h,\phi)$:
To begin, we recall the integration by parts formula for any $f_h \in Q_h$
\begin{equation}\label{ibp}
     \intTd{\pdedgei{f_h}v_h } = -  \intTd{f_h \pdmeshi v_h },
\end{equation}
where $v_h$ is piecewise constant on the dual grid $\{D_\sigma| \sigma \in \facesi\}$, $\pdedgei := \Gradd$ on $D_\sigma$ when $\sigma \in \facesi$ and
$\pdmeshi v_h|_K := \frac{v_h|_{\sigma_{K+}} - v_h|_{\sigma_{K-}}}{h}$. Here, $\sigma_{K\pm} \in \facesi$ are the left and right faces of $K \in \grid$ in the $i^{\rm th}$ direction. 
Denoting $\Laphi f_h= \pdmeshi \pdedgei f_h$ and noticing $\Laph f_h =\sumi \pdmeshi \pdedgei f_h$ we have
 \begin{align*}
&  \abs{ E_1(r_h,\phi) }
 = \abs{\intTau  \intfaces{  \jump{r_h } \abs{\avs{\vuh}\cdot\vc{n}}\jump{ \Pim \phi} }\dt }
 =
\abs{h \sumi \intTauTd{ r_h  \pdmeshi \Big( \abs{\avs{\uih} } \pdedgei{\Pim \phi}\Big) }  }
\\& =
\abs{h \sumi \intTauTd{ r_h \left( \Laphi{\Pim \phi} \Pim \abs{\avs{\uih} } +  (\Pim \pdedgei{\Pim \phi})  \pdmeshi \abs{\avs{\uih} }   \right)} }
\\& \aleq 
h  \norm{ r_h}_{L^2((0,\tau)\times{\tor\setminus \Och})} \norm{ \phi}_{L^\infty(0,\tau;W^{2,\infty}({\tor\setminus \Och}))}  \left( \norm{ \vuh}_{L^2((0,\tau)\times{\tor\setminus \Och})}  +\norm{ \Gradd \vuh}_{L^2((0,\tau)\times{\tor\setminus \Och})}  \right)
\\& \quad 
+\abs{h \sumi \intTauOch{ r_h \left( \Laphi{\Pim \phi} \Pim \abs{\avs{\uih} } +  (\Pim \pdedgei{\Pim \phi})  \pdmeshi \abs{\avs{\uih} }   \right)} }
\\& \aleq  h \norm{ \phi}_{L^\infty(0,\tau;W^{2,\infty}({\tor\setminus \Och}))} +
h  \intTauOchp{ \abs{r_h \vuh} }  \norm{\Laph{\Pim \phi}}_{L^\infty ((0,\tau) \times\Och)}   \\& \quad +h \intTauOchp{\abs{ r_h \pdmeshi \abs{\avs{\uih} }} } \norm{\Gradd{\Pim \phi}}_{L^\infty ((0,\tau) \times\Och)}
\end{align*}
which implies that for $(r_h,\phi) = (\vrh, \phi_1)$ and $(\vrh \vuh, \tvu)$ we have 
\begin{align*}
&|E_1(\vrh,\phi_1) | \aleq h+ h^{-1} \intTauOchp{\abs{\vuh}} + \intTauOchp{\abs{\Gradd \vuh}},
\\&|E_1(\vrh \vuh,\tvu) | \aleq h+  \intTauOchp{\vrh\abs{\vuh}^2}.
\end{align*}

\item $E_2(r_h,\phi)$: Let us definite by $\facesC$ the set of all faces inside $\Och$. Then applying triangular inequality,  H\"older's inequality and the boundedness of $\vrh$ we have 
\begin{align*}
& \abs{E_2(\vrh, \phi_1)} 
\aleq  h   \intTau  \intfacesIO{ \abs{\jump{\vuh}}} \dt 
+\int_0^{\tau}  \intfacesC{\abs{\jump{\vuh} \cdot \vn}}\dt +
  \aleq h +  \intTauOchp{|\Gradd \vuh|}
\end{align*}
and 
\begin{align*}
& \abs{E_2(\vrh \vuh,\tvu)}
\aleq   h \norm{\tvu}_{L^{\infty}(0,\tau;W^{1,\infty}(\tor))}  \left( \int_0^{\tau}  \intfaces{ \abs{\jump{\vuh}}^2 }\dt\right)^{1/2}   \left( \int_0^{\tau}  \intfaces{ \abs{\jump{\vrh\vuh}}^2 }\dt\right)^{1/2}
\\&
\aleq   h^{3/2}  \left( \int_0^{\tau}  \intfaces{ \avs{\abs{\vrh\vuh}^2} }\dt\right)^{1/2} 
\aleq h  \left( \intTauTd{ \vrh\abs{\vuh}^2} \right)^{1/2}  
\aleq  h.
\end{align*}

\item $E_3(r_h,\phi)$: 
Using triangular inequality and \eqref{es-d1} we have
\begin{align*}
&\abs{ E_3(\vrh,\phi_1)} = h^\viso \abs{ \intTau \intfaces{\jump{\vrh} \jump{\Pim \phi} }\dt }
\\& \aleq
h^\viso  \intTau \intfacesC{ 1 }\dt 
+ h^{1+\viso} \intTau \intfacesIO{|\jump{\vrh}|} \dt  
\aleq  h^{\viso}+  h^{(1+\viso)/2} .
\end{align*}
Moreover, using \eqref{ibp}, \eqref{EXTE4}, and \eqref{lm_u2} we have
\begin{align*}
&  \abs{E_3(\vrh \vuh,\tvu)}
= h^{\viso+1} \abs{ \intTauTd{\vrh \vuh \cdot \Laph \Pim \tvu } }
\aleq h^{\viso+1} 
+  h^\viso \intTauOch{| \vuh | }.
\end{align*}

\item $E_4(\vrh \vuh,\tvu)$: By H\"older's inequality and \eqref{grad_u2} we have 
\begin{align*}
 \abs{E_4(\vrh \vuh,\tvu)}  
 \aleq h  + \intTauOch{\vrh|\vuh|^2} . 
\end{align*}

\item $E_4^{new}(\vrh, \phi_1)$: Analogously, we have
\begin{align*}
&
\abs{E_4^{new}(\vrh, \phi_1)} 
= \Big| \intTauOIh{ \vrh \vuh \cdot (\Gradh (\Pim \phi_1) -  \Grad \phi_1) } 
+\intTauOOh{ \vrh \vuh \cdot \Gradh (\Pim \phi_1)}  
\\& 
+ \intTauOch{ \vrh \vuh \cdot \Gradh (\Pim \phi_1)}   
- \intTau \int_{\Of \setminus \OI_h}{ \vrh \vuh \cdot \Grad \phi_1 } \dxdt
  \Big|
\\&
\aleq h + \intTauOsh{ \abs{\vuh} }   + (h^{-1}+1) \intTauOch{\abs{\vuh}} ,
\end{align*}
where we have used the fact $\OO_h \subset \Osh$ and $\Of \setminus \OI_h \subset \Och$.  
\item Penalty term $E_{\penl}$:
\begin{align*}
\abs{E_{\penl}(\tvu)}  = \Bigabs{ \frac{1}{\penl} \int_{0}^{\tau} \int_{\Osh \setminus \Os} \vuh \cdot \tvu  \dxdt}
\aleq  \abs{ \frac{1}{\penl} \int_0^{\tau}\int_{\Osh\setminus \Os }{ \left(\delta \vuh^2 + \frac1{\delta} \tvu^2\right) } \dxdt}
\aleq \delta \frac{\norm{\vuh}_{L^2(0,\tau)\times\Osh)}^2}{\penl} + \frac{h^3}{\penl} .
\end{align*}

\item Viscosity terms $E_{\Grad \vu}$ and pressure term $ E_{p}$:
Recalling \eqref{grad_u2} 
and boundedness of the density \eqref{ass}, we have
\begin{align*}
\abs{E_{\Grad \vu}(\tvu)} \aleq h+ \intTauOch{\big( |\Gradd\vuh| + |\Divh \vuh| \big)}, \quad 
\abs{E_{p}(\tvu)} \aleq h + \intTauOch{p_h} \aleq h.
\end{align*}
 \end{itemize}

Consequently, collecting all the above estimates we derive
\begin{align*}
  &  \abs{e_S} \aleq  h+ h^{(1+\viso)/2} + h^{\viso} + \intTauOsh{ \abs{\vuh} } 
  \\& + (h^\alpha + 1 +h^{-1}) \intTauOchp{|\vuh|} + \intTauOch{ (|\Gradd\vuh|+ \vrh|\vuh|^2  + |\Divh \vuh| )}.
\end{align*}
Finally, applying Lemma~\ref{lm_m} finishes the proof.
\end{proof}

\subsection{Estimates on $e_{R}$}\label{sec_er}
 In this section we estimate $e_R$ defined in \eqref{ee-SR}. We begin with the following lemma. 
\begin{Lemma}\label{lmSP2}
Let $\gamma >1$ and $(\vrh, \vuh)$ be a solution obtained by the FV method \eqref{VFV} with $(h,\penl) \in(0,1)^2$. Let  $(\tvr, \tvu)$ be the extended strong solution in the sense of Definition~\ref{def_ES}. Then the following holds 
\begin{align*}
\norm{ \vuh - \tvu}_{L^2(\tor;\R^d)}^2   \aleq   h+\norm{ \Gradd  \vuh - \Grad \tvu}_{L^2(\tor)}^2  + \RE(\vrh, \vuh| \tvr, \tvu).
\end{align*}
\end{Lemma}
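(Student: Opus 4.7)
The relative energy $\RE(\vrh,\vuh|\tvr,\tvu)$ controls the $\vrh$--weighted quantity $\intTd{\vrh|\vuh-\tvu|^2}$, whereas the statement asks for the \emph{unweighted} $L^2$ norm of $\vuh-\tvu$. My plan is therefore to invoke the generalized Sobolev--Poincar\'e inequality (Lemma~\ref{lmSP}), which is precisely the tool that converts such a density--weighted $L^2$ control (plus a discrete $H^1$ seminorm) into a genuine $L^q$ bound. Since $\vuh\in Q_h^d$ but $\tvu$ is not piecewise constant, I will apply Lemma~\ref{lmSP} not to $\vuh-\tvu$ itself but to its piecewise constant proxy
\[
f_h := \vuh-\Pim\tvu \in Q_h^d,
\]
and handle the interpolation remainder $\Pim\tvu-\tvu$ separately.

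The hypotheses of Lemma~\ref{lmSP} are in place: the mass conservation from Lemma~\ref{lem_p1} gives $\intTd{\vrh}=\intTd{\tvr_0}>0$, and the uniform energy bound \eqref{ap1} yields $\intTd{\vrh^\gamma}\aleq 1$. Thus Lemma~\ref{lmSP} applied componentwise to $f_h$ produces
\[
\norm{\vuh-\Pim\tvu}_{L^2(\tor)}^2
\;\aleq\; \norm{\Gradd(\vuh-\Pim\tvu)}_{L^2(\tor)}^2
\;+\;\intTd{\vrh\,|\vuh-\Pim\tvu|^2}.
\]
I would then split each right--hand term by the triangle inequality around $\tvu$ and $\Grad\tvu$ respectively. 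For the gradient part, the interpolation estimate \eqref{grad_u} gives
\[
\norm{\Gradd(\vuh-\Pim\tvu)}_{L^2}^2
\;\aleq\;
\norm{\Gradd\vuh-\Grad\tvu}_{L^2}^2
+\norm{\Grad\tvu-\Gradd\Pim\tvu}_{L^2}^2
\;\aleq\;
\norm{\Gradd\vuh-\Grad\tvu}_{L^2}^2+h .
\]
For the weighted part, I use that the kinetic component of $\RE$ equals $\tfrac12\intTd{\vrh|\vuh-\tvu|^2}$, while $\intTd{\vrh|\tvu-\Pim\tvu|^2}\aleq h^2$ is a standard projection estimate (using either the uniform upper bound \eqref{ass} on $\vrh$, or H\"older's inequality together with $\vrh\in L^\gamma$ and $\|\tvu-\Pim\tvu\|_{L^\infty}\aleq h$).

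Putting things together and adding the outer projection error
$\norm{\Pim\tvu-\tvu}_{L^2}^2\aleq h^2\leq h$, another triangle inequality yields the target estimate. The one structural check that needs a line of justification is that $\tvu\in W^{1,\infty}(\tor;\R^d)$, so that the interpolation bound \eqref{grad_u} and the pointwise bound $\|\Pim\tvu-\tvu\|_{L^\infty}\aleq h$ are legitimate on all of $\tor$: this follows from Definition~\ref{def_ES}, since $\tvu$ is smooth inside $\Of$, identically zero on $\Os$, and continuous across $\partial\Of$ thanks to the no-slip boundary condition $\vu|_{\partial\Of}=0$. The potential discontinuity of $\Grad\tvu$ across $\partial\Of$ is the only delicate point, and it is precisely absorbed into the $\order(h)$ term through the boundary--layer estimate \eqref{grad_u} already proved. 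This is the step I expect to be the most subtle; all remaining estimates are triangle-inequality bookkeeping.
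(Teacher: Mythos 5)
Your proposal is correct and follows essentially the same route as the paper: apply the Sobolev--Poincar\'e inequality (Lemma~\ref{lmSP}) to the piecewise constant proxy $f_h=\vuh-\Pim\tvu$, split the resulting gradient and density-weighted terms by the triangle inequality around $\Grad\tvu$ and $\tvu$, absorb the weighted term into $\RE$ and the interpolation remainders into $\order(h)$ via \eqref{grad_u} and the projection estimates. The paper's proof is exactly this computation, so no further comment is needed.
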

\begin{proof}
Firstly, by setting $f_h =\vuh -\Pim \tvu$ in Lemma \ref{lmSP} we know that
\[
\norm{\vuh -\Pim \tvu}_{L^2(\tor;\R^d)}^2  \aleq  \left( \norm{  \Gradd (\vuh -\Pim \tvu)}_{L^2(\tor;\R^d)}^2  + \intTd{\vrh |\vuh -\Pim \tvu|^2 } \right).
\]
Then by the triangular inequality, projection error estimate and \eqref{grad_u} we derive
\begin{align*}
& \norm{ \vuh - \tvu}_{L^2(\tor;\R^d)} ^2
\leq
 \norm{ \vuh - \Pim \tvu}_{L^2(\tor;\R^d)} ^2 + \norm{ \Pim \tvu - \tvu}_{L^2(\tor;\R^d)}^2
\\& \aleq
 \norm{  \Gradd  (\vuh -\Pim \tvu)}_{L^2(\tor;\R^d)}^2  + \intTd{\vrh |\vuh -\Pim \tvu|^2 }    + \left(h   \norm{\Grad \tvu }_{ L^\infty(\tor;\R^{d \times d})}\right)^2
\\& \aleq
  \norm{ \Gradd  \vuh - \Grad \tvu}_{L^2(\tor;\R^{d \times d})}^2  +  \intTd{\vrh  |\vuh -\tvu|^2} 
  \\& \quad
  + \norm{ \Grad \tvu - \Gradd  \Pim \tvu}_{L^2(\tor;\R^{d \times d})}^2  + \intTd{\vrh |\Pim \tvu -\tvu|^2 }     + h^2   
  \\&
 \aleq    \norm{ \Gradd \vuh - \Grad \tvu }_{L^2(\tor;\R^{d \times d})}^2  + \RE(\vrh, \vuh| \tvr, \tvu)   + h  ,
\end{align*}
which completes the proof. 
\end{proof}

We are now ready to show the estimate of $e_R$ given in \eqref{ee-SR}. 
\begin{Lemma}\label{lm_er}
Let $\gamma > 1$ and $(\vrh, \vuh)$ be a solution of the FV method \eqref{VFV} with $(h,\penl) \in (0,1)^2$.  
Let  $(\tvr, \tvu)$ be the extended strong solution in the sense of Definition~\ref{def_ES}. 
Suppose that $\vrh$ is uniformly bounded from above by a positive constant. 
Then
\begin{multline*}
    \abs{e_R} \aleq   h 
  +\frac{\penl}{h} 
+ \int_0^\tau \RE(\vrh, \vuh| \tvr, \tvu) \dt + \delta \frac{\norm{\vuh}_{L^2(0,\tau)\times\Osh)}^2}{\penl}  
\\  + \delta \mu\norm{ \Gradd  \vuh - \Grad \tvu}_{L^2((0,\tau)\times\tor)}^2 + \delta \nu\norm{ \Divh  \vuh - \Div \tvu}_{L^2((0,\tau)\times\tor)}^2.
\end{multline*}
\end{Lemma}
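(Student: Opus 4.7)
The plan is to bound each of the five terms $R_1,\dots,R_5$ in $e_R=\sum R_i$ separately, and then sum. The terms $R_1,R_2,R_5$ are relatively direct, while $R_3,R_4$ (the viscous coupling) are the main obstacle.

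First, for $R_1$ I would use the momentum equation for the strong solution on $\Of$ to rewrite
\[
\pd_t\tvu+\tvu\cdot\Grad\tvu+\frac{\Grad p(\tvr)}{\tvr}=\frac{\mu\Lap\tvu+\nu\Grad\Div\tvu}{\tvr},
\]
which is bounded uniformly on $\Of$ by the regularity class \eqref{STC} and positivity of $\tvr$. Hence $|R_1|\aleq\int_0^\tau\intOf|(\vrh-\tvr)(\vuh-\tvu)|\,\dt$, and Lemma \ref{difru} gives a bound of $\delta\|\vuh-\tvu\|_{L^2}^2+\int_0^\tau\RE\,\dt$; then Lemma \ref{lmSP2} converts $\|\vuh-\tvu\|_{L^2}^2$ into $h+\|\Gradd\vuh-\Grad\tvu\|_{L^2}^2+\RE$, producing the desired shape. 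For $R_2$, boundedness of $\Grad\tvu$ directly gives $|R_2|\aleq\int_0^\tau\intOf\vrh|\vuh-\tvu|^2\aleq\int_0^\tau\RE\,\dt$. For $R_5$, a Taylor-type identity $p(\vrh)-p'(\tvr)(\vrh-\tvr)-p(\tvr)=(\gamma-1)\,\bbE(\vrh|\tvr)$ together with $\Div\tvu\in L^\infty$ gives $|R_5|\aleq\int_0^\tau\RE\,\dt$.

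The core work is $R_3$ (and analogously $R_4$). The idea is to exploit that $\Grad\tvu$ and $\Lap\tvu$ vanish pointwise on $\Os$ (since $\tvu|_{\Os}\equiv0$), so that $\intOf$ in $R_3$ may be replaced by $\intTd$. I would then introduce the discrete projection $\Pim\tvu$ and use the discrete integration-by-parts identity on the torus (no boundary terms),
\[
\intTd\Gradd\vuh:\Gradd\Pim\tvu=-\intTd\vuh\cdot\Laph\Pim\tvu,
\]
to write
\[
R_3=-\mu\intTauTd\bigl[\Gradd\vuh:(\Grad\tvu-\Gradd\Pim\tvu)+\vuh\cdot(\Lap\tvu-\Laph\Pim\tvu)\bigr].
\]
Now split $\tor=\OI_h\cup\Och\cup\OO_h$. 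On $\OO_h$ both differences are identically zero. On $\OI_h$ classical interpolation gives $|\Grad\tvu-\Gradd\Pim\tvu|\aleq h$ and $|\Lap\tvu-\Laph\Pim\tvu|\aleq h$, yielding an $O(h)$ contribution after Cauchy--Schwarz with the uniform bounds \eqref{ap}. The boundary strip $\Och$ is the delicate region: there $|\Grad\tvu-\Gradd\Pim\tvu|$ is only $O(1)$ and $|\Lap\tvu-\Laph\Pim\tvu|$ is $O(h^{-1})$. For these I would apply Lemma~\ref{lm_m}, specifically \eqref{lm_gradu4} and \eqref{lm_u2}, which trade $\int_{\Och}|\Gradd\vuh|$ and $h^{-1}\int_{\Och}|\vuh|$ against $h$, $\penl/h$, $\delta\mu\|\Gradd\vuh-\Grad\tvu\|_{L^2}^2$, and $\delta\|\vuh\|_{L^2(\Osh)}^2/\penl$. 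The treatment of $R_4$ is identical, using the summation-by-parts identity $\intTd g_h\,\Divh\vuh=-\intTd\vuh\cdot\Gradd g_h$ together with \eqref{lm_div4}, \eqref{lm_u2}.

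The hard part is precisely the $\Och$-estimate for the term $-\mu\int\vuh\cdot(\Lap\tvu-\Laph\Pim\tvu)$, where the factor $h^{-1}$ from the discrete Laplacian inside the boundary layer threatens to blow up. This is absorbed only because the penalty forces $\vuh$ to be small on $\Osh$: concretely, \eqref{lm_u2} yields $\int_{\Och}|\vuh|\aleq h^2+\penl+h\,\delta(\|\Gradd\vuh-\Grad\tvu\|_{L^2}^2+\|\vuh\|_{L^2(\Osh)}^2/\penl)$, so multiplying by $h^{-1}$ leaves exactly the $h$, $\penl/h$, and $\delta$-fraction of LHS terms appearing in the target estimate. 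Adding the bounds for $R_1,\dots,R_5$ yields the claim.
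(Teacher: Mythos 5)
Your proposal is correct and follows the same overall strategy as the paper: $R_2+R_5$ are absorbed directly into the relative energy (your identity $p(\vrh)-p'(\tvr)(\vrh-\tvr)-p(\tvr)=(\gamma-1)\,\bbE(\vrh|\tvr)$ is exact for the isentropic law), $R_1$ is handled via Lemma~\ref{difru} combined with Lemma~\ref{lmSP2}, and the viscous terms $R_3,R_4$ are reduced, after a discrete integration by parts and the splitting $\tor=\OI_h\cup\Och\cup\OO_h$, to the boundary-strip quantities $\intTauOch{|\Gradd\vuh|}$, $\intTauOch{|\Divh\vuh|}$ and $h^{-1}\intTauOch{|\vuh|}$, which are exactly the objects Lemma~\ref{lm_m} (\eqref{lm_gradu4}, \eqref{lm_div4}, \eqref{lm_u2}) trades against $h$, $\penl/h$ and the $\delta$-multiples of the left-hand-side terms. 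The only genuine difference is organizational: for $R_3,R_4$ the paper introduces the face-average projections $\Pie\Grad\tvu$ and $\Pid\Div\tvu$ via the Gauss theorem on primal and dual cells, so that the cross term $\intTd{\Gradd\vuh:(\Pie\Grad\tvu-\Grad\tvu)}$ vanishes exactly and the remaining consistency errors are covered by \eqref{grad_u2}; you instead insert the cell projection $\Pim\tvu$ and use the global summation-by-parts identity $\intTd{\Gradd u_h\cdot\Gradd v_h}=-\intTd{u_h\,\Laph v_h}$, which pushes the consistency error into the pointwise differences $\Grad\tvu-\Gradd\Pim\tvu$ and $\Lap\tvu-\Laph\Pim\tvu$. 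Your route is equally valid on the uniform Cartesian grid used here, provided you note (as you implicitly do) that every cell of $\OI_h$ has all its neighbours inside $\Of$, so the discrete Laplacian stencil of $\Pim\tvu$ only sees the smooth part of the extension and the $O(h)$ consistency on $\OI_h$ is legitimate; the paper's choice of $\Pie,\Pid$ merely avoids having to verify this stencil-level consistency explicitly.
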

\begin{proof}
Firstly, thanks to the regularity of the strong solution we can control $R_2+R_5$ by means of relative energy
\begin{align*}
\abs{R_2+R_5} \aleq  \int_0^\tau  \RE(\vrh, \vuh| \tvr, \tvu)  \dt.
\end{align*}
Secondly, thanks to Lemmas~\ref{difru} and \ref{lmSP2} we directly obtain the estimate of $R_1$ 
\begin{equation*}
\abs{R_1}
\aleq
h +\int_0^{\tau} \RE(\vrh, \vuh| \tvr, \tvu) \dt
+\delta \norm{ \Gradd  \vuh - \Grad \tvu}_{L^2((0,\tau)\times\tor)}^2.
\end{equation*}
Thirdly, by the Gauss theorem we know that
\begin{align*}
    \int_K{ \Div \vU} \dx = \int_K{ \Divmesh \Pie \vU}\dx  \mbox{ for any } K \in \grid,
\end{align*}
where $\Divmesh =\sumi \pdmeshi$ and $\pdmeshi$ is defined in the previous subsection. 
Thus we have
\[\intOfh{\vuh \cdot \Lap \tvu } = \intOfh{\vuh \cdot \Divmesh \Pie \Grad \tvu}.\]
Using this identity and $\tvu|_{\Os} = 0$ we observe
\begin{align*}
&  R_3
= -  \mu \intTauTd{  \Gradd \vuh: \Grad \tvu } -  \mu \intTauOf{  \vuh \cdot \Lap \tvu  }
\\& = -  \mu \intTauTd{  \Gradd \vuh: \Grad \tvu} 
-\mu \intTauOfh{ \vuh \cdot \Divmesh \Pie \Grad \tvu }
    - \mu \int_0^\tau \int_{\Of \setminus \Ofh}{ \vuh \cdot \Lap \tvu }\dxdt
\\& =-  \mu \intTauTd{  \big( \Gradd \vuh: \Grad \tvu  +  \vuh \cdot \Divmesh \Pie \Grad \tvu   \big)} +I_1 = I_1+I_2,
\end{align*}
where 
\begin{align*}
I_1 &=  \mu \intTauOsh{ \vuh \cdot \Divmesh \Pie \Grad \tvu }
    -\mu \int_0^\tau \int_{\Of \setminus \Ofh}{ \vuh \cdot \Lap \tvu }\dxdt,
\\ I_2 &=  \mu \intTauTd{\Gradd \vuh: \big(  \Pie \Grad \tvu    - \Grad \tvu   \big)} = 0.
\end{align*}
Noticing that $\Divmesh \Pie \Grad \tvu|_{\OO_h} =0$, $|\Divmesh \Pie \Grad \tvu|_{\Och} \aleq h^{-1}$ and $|\Lap \tvu|_{\Of \setminus \Ofh}\aleq 1$, we can control $R_3 = I_1$ by  \eqref{lm_u2} as
\begin{align*}
  &   \abs{R_3} \aleq (1+h^{-1}) \intTauOch{\abs{\vuh}} 
     \aleq  h+\frac{\penl}{h} + \delta \mu \norm{ \Gradd  \vuh - \Grad \tvu}_{L^2((0,\tau)\times\tor)}^2   +   \delta \frac{\norm{\vuh}_{L^2((0,\tau)\times\Osh)}^2}{\penl} .
\end{align*}
To estimate $R_4$ we use the Gauss theorem again for $D_{\sigma}, \sigma \in \facesi$
\begin{align*}
    \intDh{ \pd_i  \Div \tvu } = \intDh{ \pdedgei  \Pid \Div \tvu  }, 
\end{align*}
where $\Pid \phi|_{K} := \frac{1_K}{|\ep|} \int_{\ep} \phi \ds$
for $\ep \in \pd D_\sigma \cap K$. Thanks to this equality and the integration by parts \eqref{ibp} and the equality $\Divh \vuh= \sumi \pdmeshi \avsi{\uih}$ we have
\begin{align*}
&    \intOf{ \vuh \cdot \Grad \Div \tvu   }
=   \intOfh{ \vuh \cdot \Grad \Div \tvu  } +    \int_{\Of \setminus \Ofh}{ \vuh \cdot \Grad \Div \tvu  }
\\&=
 \intOfh{ \avs{\vuh} \cdot \Grad \Div \tvu  }
+\underbrace{ \intOfh{ (\vuh-\avs{\vuh}) \cdot \Grad \Div \tvu  }
+   \int_{\Of \setminus \Ofh}{ \vuh \cdot \Grad \Div \tvu  }\dx }_{:= J_1}
 \\&=
\sumi \intTd{ \avsi{\uih}  \pdedgei  \Pid  \Div \tvu  }  +J_1 \underbrace{-
\sumi \intOsh{ \avsi{\uih}  \pdedgei  \Pid  \Div \tvu  } }_{:=J_2}
 \\&=
    - \sumi \intTd{ \pdmeshi \avsi{\uih}  \Pid    \Div \tvu  }  +J_1 +J_2
=
   -  \intTd{ \Divh \vuh  \Pid    \Div \tvu  }  +J_1 +J_2.
\end{align*}
Thus, together with \eqref{grad_u2}, \eqref{lm_u0}, \eqref{lm_div4}, and \eqref{lm_u2} we have 
\begin{align*}
&    |R_4| = \nu \abs{ \intTauTd{\Divh \vuh  (\Pid \Div \tvu -\Div \tvu)} -\intTau(J_1 +J_2)\dt}
\\&
\aleq h+ \intOch{|\Divh \vuh|} + h\intTauOfh{|\Gradd \vuh|} + \intTau \int_{\Of \setminus \Ofh}|\vuh| \dxdt + h^{-1}\intTauOch{|\vuh|}
\\& \aleq \penl + h + \frac{\penl}{h}  + \delta \frac{\norm{\vuh}_{L^2((0,\tau)\times\Osh)}^2}{\penl} 
 +\delta \mu\norm{ \Gradd  \vuh - \Grad \tvu}_{L^2((0,\tau)\times\tor)}^2 +  \delta\nu\norm{ \Divh  \vuh - \Div \tvu}_{L^2((0,\tau)\times\tor)}^2.
\end{align*}
Consequently, collecting the estimates of $R_i$-terms completes the proof. 
\end{proof}

\end{document}